\documentclass[12pt]{amsart}

\usepackage[colorlinks=true,linkcolor=blue]{hyperref}
\usepackage{amsmath}
\usepackage{amssymb}
\usepackage{amsthm}
\usepackage{bm} 

\usepackage[T1]{fontenc} 

\usepackage{graphicx}
\usepackage{epstopdf}
\usepackage{epsfig}

\usepackage{tikz}
\usepackage{tikz-cd}


\setlength{\oddsidemargin}{0truein}
\setlength{\evensidemargin}{0truein}

\setlength{\textwidth}{6.5truein}
\setlength{\textheight}{9.0truein}

\setlength{\voffset}{-0.7truein}


\theoremstyle{plain} 
\newtheorem{thm}{Theorem}[section]

\newtheorem{prop}[thm]{Proposition}
\newtheorem{lemma}[thm]{Lemma}

\theoremstyle{remark}
\newtheorem{remark}[thm]{Remark}

\newtheorem{example}[thm]{Example}

\theoremstyle{definition}
\newtheorem{defin}[thm]{Definition}


\newcommand{\PP}{\mathbb{P}}
\newcommand{\QQ}{\mathbb{Q}}

\newcommand{\ZZ}{\mathbb{Z}}

\newcommand{\calO}{{\mathcal O}}

\newcommand{\Lbar}{\overline{L}}
\newcommand{\Kbar}{\overline{K}}

\newcommand{\PKbar}{\PP^1(\Kbar)}

\DeclareMathOperator{\sgn}{sgn}
\DeclareMathOperator{\dist}{dist}

\DeclareMathOperator{\Orb}{Orb}

\DeclareMathOperator{\charact}{char}

\DeclareMathOperator{\Gal}{Gal}

\DeclareMathOperator{\Aut}{Aut}

\newcommand{\fp}{ {\mathfrak p} }

\newcommand{\fP}{ {\mathfrak P} }

\newcommand{\dsps}{\displaystyle}

\begin{document}

\title[Arboreal Galois groups for cubic polynomials]
{Arboreal Galois groups for cubic polynomials
with colliding critical points}
\date{April 4, 2024}
\subjclass[2010]{37P05, 11R32, 14G25}
\author[Benedetto]{Robert L. Benedetto}
\email{rlbenedetto@amherst.edu}
\address{Amherst College \\ Amherst, MA 01002 \\ USA}
\author[DeGroot]{William DeGroot}
\email{william.h.degroot.gr@dartmouth.edu}
\address{Dartmouth College \\ Hanover, NH 03755 \\ USA}
\author[Ni]{Xinyu Ni}
\email{xni23@amherst.ed}
\address{Amherst College \\ Amherst, MA 01002 \\ USA}
\author[Seid]{Jesse Seid}
\email{jseid23@amherst.edu}
\address{Amherst College \\ Amherst, MA 01002 \\ USA}
\author[Wei]{Annie Wei}
\email{aw804@scarletmail.rutgers.edu}
\address{Rutgers University \\ New Brunswick, NJ 08901 \\ USA}
\author[Winton]{Samantha Winton}
\email{swinton23@amherst.edu}
\address{Amherst College \\ Amherst, MA 01002 \\ USA}

\begin{abstract}
Let $K$ be a field, and let $f\in K(z)$ be a rational function of degree $d\geq 2$.
The Galois group of the field extension generated by the preimages of $x_0\in K$
under all iterates of $f$ naturally embeds in the automorphism group of
an infinite $d$-ary rooted tree. In some cases the Galois group can be the
full automorphism group of the tree, but in other cases it is known to have
infinite index. In this paper, we consider a previously unstudied such case:
that $f$ is a polynomial of degree $d=3$,
and the two finite critical points of $f$ collide at the $\ell$-th iteration,
for some $\ell\geq 2$. We describe an explicit subgroup $Q_{\ell,\infty}$
of automorphisms of the $3$-ary tree in which the resulting Galois group
must always embed, and we present sufficient conditions for this embedding
to be an isomorphism.
\end{abstract}

\maketitle

\section{Introduction}
Let $f\in K[z]$ be a polynomial of degree $d\geq 2$ defined over a field $K$.
Let $\Kbar$ be an algebraic closure of $K$.
For any integer $n\geq 0$, we write $f^n:=f\circ\cdots\circ f$,
$f^0(z)=z$, so that $f^n\in K[z]$ is a polynomial of degree $d^n$.

Let $x_0\in\Kbar$.
The \emph{forward orbit} of $x_0$ under $f$ is
\[ \Orb_f^{+}(x_0) := \{f^n(x_0) \, | \, n\geq 0\} \subseteq K(x_0), \]
and the \emph{backward orbit} of $x_0$ under $f$ is
\[ \Orb_f^{-}(x_0) := \coprod_{n\geq 0} f^{-n}(x_0) \subseteq \Kbar, \]
where $f^{-n}(y):=(f^n)^{-1}(y)$ denotes the set of $d^n$ solutions
(counted with multiplicity) of the equation $f^n(z)=y$ in $\Kbar$.

In this paper, we study the fields generated by the backward orbit of $x_0\in K$.
More precisely, 
for each $n\geq 0$, define
\[ K_n:=K(f^{-n}(x_0))\subseteq\Kbar, \quad\text{and}\quad
K_{\infty}:=\bigcup_{n\geq 0} K_n\subseteq\Kbar. \]
If $f$ is separable (as a mapping from $\PKbar$ to itself, which is certainly true if $\charact K=0$, for example),
then each $K_n/K$ is a separable and hence Galois extension. We define
\[ G_{K,n}:=\Gal(K_n/K) \quad\text{and}\quad
G_{K,\infty}:=\Gal(K_{\infty}/K)\cong\varprojlim G_{K,n} \]
to be the associated Galois groups over $K$.

If $f$ has no critical points in the backward orbit of $x_0$,
then we may consider the elements of $\Orb_f^-(x_0)$ as the nodes
of an infinite $d$-ary rooted tree $T_{d,\infty}$.
Here, $x_0$ is the root node, and the $d^n$ nodes of the tree at level~$n$
correspond to the $d^n$ elements of $f^{-n}(x_0)$;
furthermore, for each $n\geq 1$, we connect $y\in f^{-n}(x_0)$ to $f(y)\in f^{-(n-1)}(x_0)$ by an edge.
Since $f$ is defined over $K$, any $\sigma\in G_{K,\infty}$ must preserve this tree structure,
and hence $G_{K,\infty}$
is isomorphic to a subgroup of the automorphism group $\Aut(T_{d,\infty})$ of the tree.
Similarly, for any $n\geq 0$, if $T_{d,n}$ denotes the rooted $d$-ary tree up only to level~$n$,
then $G_{K,n}$ is isomorphic to a subgroup of $\Aut(T_{d,n})$.
(Here and throughout this paper, when we say that two groups
acting on a tree are isomorphic, we mean that the isomorphism is
equivariant with respect to the action on the tree.)

The study of such Galois groups was initiated in 1985 by Odoni \cite{Odoni},
later dubbed \emph{arboreal} Galois representations by Boston and Jones in 2007 \cite{BosJon}.
Much work in this area has centered on proving that $G_{K,\infty}$ can be the full group $\Aut(T_{d,\infty})$,
at least when $K$ is a number field or function field; see, for example,
\cite{BenJuu,Juul,Kadets,Looper,Odoni,Specter,Stoll}.
(For more general fields there is no such expectation,
even over some Hilbertian fields, as shown in \cite{DK22}.)
Indeed, by analogy with Serre's open index theorem \cite{Serre}
for Galois representations arising from elliptic curves, a folklore conjecture
predicts that when $K$ is a number field or function field,
$G_{K,\infty}$ should usually have finite index in $\Aut(T_{d,\infty})$.
See \cite[Conjecture~3.11]{Jones} for a precise version of this conjecture in degree $d=2$.
See also \cite{BGJT23,BEK21,FM,FPC,Hindes2,Ing13,JonMan,JKMT,Sing23,Swam}
for other results on arboreal Galois groups.

However, Serre's Open Image Theorem makes an exception for CM curves,
and similarly there are known situations when $G_{K,\infty}$ is of infinite index in $\Aut(T_{d,\infty})$,
even over number fields and function fields.
See, for example, the results of \cite{BHL} for maps with certain extra symmetries,
or \cite{ABCCF,BFHJY,Pink} for maps that are 
\emph{postcritically finite}, or PCF,
meaning that every critical point $c$ is preperiodic.
In addition, if the root point $x_0$ is periodic or in the forward orbit of a critical value,
it is easy to check that the resulting restrictions on the backward orbit of $x_0$ ensure that
$[\Aut(T_{d,\infty}) : G_{K,\infty}]=\infty$. Another property that can force
this index to be infinite is the following; we have stated it for the more general setting
of rational functions rather than polynomials.

\begin{defin}
\label{def:collide}
Let $f\in K(z)$ be a rational function,
let $\xi_1,\xi_2\in\PKbar$ be two critical points of $f$,
and let $\ell\geq 1$ be a positive integer.
We say that $\xi_1$ and $\xi_2$ \emph{collide}
at the $\ell$-th iterate if
\begin{equation}
\label{eq:PinkCond}
f^{\ell}(\xi_1)=f^{\ell}(\xi_2)
\quad\text{but } f^{\ell-1}(\xi_1)\neq f^{\ell-1}(\xi_2).
\end{equation}
\end{defin}

If each of the critical points of $f$ either collides with one other or is itself preperiodic,
it can be shown that $[\Aut(T_{d,\infty}) : G_{K,\infty}]=\infty$.
Pink was the first to note this, in \cite[Theorem~4.8.1]{Pink2},
for rational functions of degree~2 (and assuming $\charact K\neq 2$),
which have only two critical points.
The first author and Dietrich reformulated Pink's result for quadratic rational maps
with colliding critical points in \cite{BenDie}.

In this paper, we consider the analogous for cubic polynomials $f$ whose two finite critical
points collide at the $\ell$-th iterate. That is, two of the four critical points of $f$ in $\PKbar$
are at the (fixed) point at $\infty$ and hence are preperiodic, while the other two collide
with one another. Note that $\ell\geq 2$ necessarily, because if $\ell$ were $1$, the
critical image $f(\xi_1)=f(\xi_2)$ would have two preimages at each of $\gamma_1$
and $\gamma_2$, counting multiplicity, and the resulting total of four is too many
for a cubic map.

We define two subgroups
$Q_{\ell,\infty}\subseteq\widetilde{Q}_{\ell,\infty}$ of infinite index in $\Aut(T_{3,\infty})$
associated to a cubic polynomial whose two finite critical points collide at the $\ell$-th iteration;
see Definition~\ref{def:QGroup}. Our first main result is then to show that the resulting
arboreal group is necessarily contained in $\widetilde{Q}_{\ell,\infty}$, 
or even in $Q_{\ell,\infty}$, depending on whether or not $-3$ is a square in $K$.

\begin{thm}
\label{thm:main1}
Let $K$ be a field of characteristic not dividing $6$.
Let $f\in K[z]$ be a cubic polynomial with critical points $\gamma_1,\gamma_2\in\Kbar$
that collide at the $\ell$-th iterate, for some integer $\ell\geq 2$.
Then $f^{\ell}(\gamma_1)=f^{\ell}(\gamma_2)$ is $K$-rational.

Furthermore, fix any $x_0\in K$, and let $G_{K,\infty}=\Gal(K_{\infty}/K)$ be the arboreal Galois group for
$f$ over $K$, rooted at $x_0$. Then:
\begin{enumerate}
\item $G_{K,\infty}$ is isomorphic to a subgroup of $\widetilde{Q}_{\ell,\infty}$,
via an appropriate labeling of the tree.
\item $G_{K,\infty}$ is isomorphic to a subgroup of $Q_{\ell,\infty}$
if and only if $-3$ is a square in $K$.
\end{enumerate}
\end{thm}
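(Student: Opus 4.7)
The proof proceeds in three stages: rationality of the collision point, construction of a canonical labeling of the preimage tree that makes the containment $G_{K,\infty}\subseteq\widetilde{Q}_{\ell,\infty}$ visible, and analysis of when the refinement to $Q_{\ell,\infty}$ is forced.

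For the rationality claim, observe that the critical locus $\{\gamma_1,\gamma_2\}$ is the zero set of the $K$-rational polynomial $f'$, and is therefore stable under $\Gal(\Kbar/K)$. For any $\sigma\in\Gal(\Kbar/K)$, either $\sigma(\gamma_1)=\gamma_1$ or $\sigma(\gamma_1)=\gamma_2$, and in either case $\sigma(f^\ell(\gamma_1))=f^\ell(\sigma(\gamma_1))\in\{f^\ell(\gamma_1),f^\ell(\gamma_2)\}=\{c\}$. Hence $c:=f^\ell(\gamma_1)=f^\ell(\gamma_2)$ is fixed by $\Gal(\Kbar/K)$, so $c\in K$.

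Next, after replacing $f$ by a $K$-rational affine conjugate I would assume $f(z)=z^3+pz+q$, so that $\gamma_1=-\gamma_2$ with $\gamma_1^2=-p/3$ and the critical values are $v_i=q+\tfrac{2}{3}p\gamma_i$. The plan is to build the labeling of $T_{3,\infty}$ inductively. Above the vertex $c$, at each level $k\in\{1,\dots,\ell\}$ the set of preimages of $c$ contains the \emph{collision pair} $\{f^{\ell-k}(\gamma_1),f^{\ell-k}(\gamma_2)\}$, which I would record as combinatorial data in the labeling. I would then extend the labeling to the full preimage tree of $x_0$ by transporting the ``above-$c$'' structure to every appearance of $c$ as an internal vertex. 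Because every Galois automorphism either fixes or swaps $\gamma_1$ and $\gamma_2$, the induced tree automorphism preserves every collision pair setwise, which is precisely the combinatorial condition defining $\widetilde{Q}_{\ell,\infty}$.

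For the refinement, $Q_{\ell,\infty}$ should sit inside $\widetilde{Q}_{\ell,\infty}$ as an index-two subgroup distinguished by a coherent cyclic ordering of the three children of each vertex; such an ordering is Galois-equivariant over $K$ precisely when $K$ contains a primitive cube root of unity, that is, when $-3\in(K^\times)^2$. To check this I would use the identity $\Disc(f(z)-y)=-27(y-v_1)(y-v_2)$: the orientation character of the Galois action on the three preimages of $y$ is controlled by $\sqrt{\Disc(f-y)}=3\sqrt{-3}\cdot\sqrt{(y-v_1)(y-v_2)}$, and the factor $\sqrt{(y-v_1)(y-v_2)}$ generates the same quadratic extension of $K(y)$ as the critical-point information already encoded in the collision-pair labeling, leaving $\sqrt{-3}$ as the remaining obstruction. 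If $\sqrt{-3}\in K$, I would choose a labeling realizing the cyclic ordering as $K$-rational data and deduce $G_{K,\infty}\subseteq Q_{\ell,\infty}$; otherwise, the nontrivial element of $\Gal(K(\sqrt{-3})/K)$ lifts to a Galois element that acts as an odd permutation on some preimage set, hence lies in $\widetilde{Q}_{\ell,\infty}\setminus Q_{\ell,\infty}$. The most delicate step will be the canonical construction of the labeling: it must make $G_{K,\infty}\subseteq\widetilde{Q}_{\ell,\infty}$ a genuine constraint rather than a tautology, and its inductive extension from the collision subtree above $c$ to the full preimage tree must remain consistent at every vertex where $c$ reappears, possibly at many different levels.
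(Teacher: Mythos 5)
Your rationality argument is correct and matches the paper's, and you correctly identify that some discriminant identity involving $-3$ must control the $Q_{\ell,\infty}$ versus $\widetilde{Q}_{\ell,\infty}$ dichotomy. But the core of your proposal does not align with what $\widetilde{Q}_{\ell,\infty}$ actually is, and the labeling construction has a structural flaw.

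First, $\widetilde{Q}_{\ell,\infty}$ is \emph{not} defined by the condition that automorphisms preserve ``collision pairs'' setwise. Its defining condition (Definition~\ref{def:QGroup}) is that the product $\sgn_{\ell}(\sigma,y)\sgn_{\ell-1}(\sigma,y)$ is independent of the node $y$. This is a statement about parities of the induced permutations of the $3^{\ell}$ and $3^{\ell-1}$ nodes sitting $\ell$ and $\ell-1$ levels above each $y$, not about the two critical points being swapped or fixed. Your claim that Galois automorphisms land in $\widetilde{Q}_{\ell,\infty}$ ``because every Galois automorphism either fixes or swaps $\gamma_1$ and $\gamma_2$'' therefore does not establish anything about the sign conditions that must be verified.

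Second, the inductive labeling you sketch is built ``above the vertex $c$'' where $c=f^{\ell}(\gamma_1)$, and you propose to transport that structure ``to every appearance of $c$ as an internal vertex.'' But $c$ is just an element of $K$; in general it is \emph{not} a node of the preimage tree of $x_0$, so the construction is not well-formed. The paper's construction (Step~1 of the proof in Section~\ref{ssec:pfthm1}) instead works directly on the tree $\Orb_f^-(x_0)$, inductively relabeling level by level so that a carefully chosen invariant $S(\sigma,y):=\sgn_{\ell}(\sigma,y)\sgn_{\ell-1}(\sigma,y)\frac{\sigma(\sqrt{-3})}{\sqrt{-3}}$ equals $+1$ for all $\sigma$ and $y$, using the four identities of Lemma~\ref{lem:sgncond} to propagate consistency across a Galois orbit once one representative is fixed.

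Third, your degree-one discriminant computation $\Delta(f-y)=-27(y-v_1)(y-v_2)$ would at best control $\sgn_1(\sigma,y)$, but the constraint defining $\widetilde{Q}_{\ell,\infty}$ lives at levels $\ell$ and $\ell-1$. The key identity that makes everything work (hidden in the proof of Lemma~\ref{lem:sgncond}(1), using the iterated discriminant formula~\eqref{eq:cubiciter}) is
\[ \Delta(f^{\ell}-y)\,\Delta(f^{\ell-1}-y) = -3\,D_{\ell}(y)^2, \qquad D_{\ell}(y)\in K(y), \]
which is valid precisely because the collision at iterate $\ell$ makes $\big(f^{\ell}(\gamma_1)-y\big)\big(f^{\ell}(\gamma_2)-y\big)$ a perfect square. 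This is the mechanism by which $\sqrt{-3}$ becomes the sole obstruction, and it is not the degree-one statement you wrote. Relatedly, your description of $Q_{\ell,\infty}$ as an index-two subgroup cut out by a ``coherent cyclic ordering of the three children'' is not correct: $Q_{\ell,\infty}$ is the kernel of the sign character $\sigma\mapsto\sgn_{\ell}(\sigma,x_0)\sgn_{\ell-1}(\sigma,x_0)$ on $\widetilde{Q}_{\ell,\infty}$, a condition on high-level parities, not on cyclic orientations of sibling triples.
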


In our second main result, we present sufficient conditions for such a cubic polynomial $f$
and root point $x_0\in K$ to have arboreal Galois group $G_{K,\infty}$ equal to all of 
$\widetilde{Q}_{\ell,\infty}$ or $Q_{\ell,\infty}$. To state it,
for a cubic polynomial $f\in K[z]$ with critical points
$\gamma_1,\gamma_2\in\Kbar$, define $A\in K^{\times}$ to be the lead coefficient of $f$,
and define $B,C_1,C_2,\ldots \in K$ by
\begin{equation}
\label{eq:BCdef}
B := f'\bigg(\frac{1}{2}(\gamma_1+\gamma_2)\bigg)
\quad\text{and}\quad
C_n := \big( f^n(\gamma_1) - f^n(\gamma_2) \big)^2
\end{equation}
Note that $B,C_1,C_2,\ldots$
are indeed $K$-rational because they lie in $K(\gamma_1)=K(\gamma_2)$
and are $\Gal(K(\gamma_1)/K)$-invariant.


\begin{thm}
\label{thm:main2}
With notation as in Theorem~\ref{thm:main1} and equation~\eqref{eq:BCdef},
suppose further that $K$ is the field of fractions of a Dedekind domain $\calO_K$.
Also suppose that there is a sequence of pairwise distinct places
$v_1,v_2,\ldots\in M_K^0$
such that
\begin{enumerate}
\item $v_n(A)=v_n(B)=v_n(6)=0 \leq v_n(x_0)$
for every $n\geq 1$,
\item $v_n(C_j)=0$ for all $j=1,\ldots,\ell-1$ and all $n\geq j$,
\item $v_n((f^i(\gamma_1)-x_0)(f^i(\gamma_2)-x_0))=0$ for all $i,n$
with $1\leq i\leq n -1$,
\item $v_n((f^n(\gamma_1)-x_0)(f^n(\gamma_2)-x_0))$ is odd for all $1\leq n\leq \ell -1$, and
\item $v_n(f^n(\gamma_1)-x_0)$ is odd for all $n\geq \ell$.
\end{enumerate}
Suppose also that there is a non-archimedean place $u$ of $K$
such that $u(A)=0\leq u(B)$, and for which $u(x_0)$ is negative and prime to $3$.
Then 
$G_{K,\infty}\cong\widetilde{Q}_{\ell,\infty}$ if $\sqrt{-3}\not\in K$,
and 
$G_{K,\infty}\cong Q_{\ell,\infty}$ if $\sqrt{-3}\in K$.
\end{thm}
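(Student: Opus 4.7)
The plan is to prove the theorem by induction on $n$: Theorem~\ref{thm:main1} already gives the embedding $G_{K,n}\hookrightarrow\widetilde{Q}_{\ell,n}$ (or into $Q_{\ell,n}$ when $\sqrt{-3}\in K$), so only surjectivity remains. The inductive step analyses the relative extension $K_n/K_{n-1}$, which is generated by the roots of the cubics $f(z)-y$ as $y$ ranges over $f^{-(n-1)}(x_0)$. Each such cubic contributes a subextension with Galois group inside $S_3$, controlled on the cube-root side by how the $3$-cycle action is realised, and on the sign side by the discriminant, which a direct computation identifies (modulo squares in $K_{n-1}$) as $\Disc(f(z)-y)\equiv -3(f(\gamma_1)-y)(f(\gamma_2)-y)$. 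The quotient $\widetilde{Q}_{\ell,n}/\widetilde{Q}_{\ell,n-1}$ splits correspondingly into a cube-root part and a sign part, and the heart of the proof is to produce enough Galois elements to realise each of these.

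For the sign part, I would use the place $v_n$. Hypotheses (1) and (2) give $f$ and the separation quantities $C_j$ good reduction at $v_n$, while hypothesis (3) guarantees that for every $i<n$ the factor $(f^i(\gamma_1)-x_0)(f^i(\gamma_2)-x_0)$ is a $v_n$-unit; together these imply $K_{n-1}/K$ is unramified at $v_n$. The odd-valuation condition of hypothesis (4) (for $1\le n\le\ell-1$) or of hypothesis (5) (for $n\ge\ell$) then forces the new discriminant class attached to level $n$ to be a nonsquare in $K_{n-1}$. The resulting quadratic sub-extension of $K_n/K_{n-1}$, ramified at $v_n$, is precisely what is needed to produce the sign generators of $\widetilde{Q}_{\ell,n}/\widetilde{Q}_{\ell,n-1}$.

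For the cube-root part, I would exploit the place $u$. Since $u(A)=0\le u(B)$ and $u(x_0)<0$ with $\gcd(u(x_0),3)=1$, after centering the critical midpoint at the origin the Newton polygon of $f(z)-x_0$ at $u$ has a single slope $u(x_0)/3\notin\ZZ$, so every root of $f(z)-x_0$ generates a totally tamely ramified cubic extension of the completion $K_u$. Iterating, the tower of completions at $u$ is totally tamely ramified of degree $3^n$, and the tame inertia at $u$ provides elements of $G_{K,n}$ that act as $3$-cycles on the children of each node of the tree. These realise the cube-root part of the relative quotient. Because the ramification producing the sign generators lies at $v_n\neq u$, the two kinds of generators are independent in the abelianised quotient $\widetilde{Q}_{\ell,n}/\widetilde{Q}_{\ell,n-1}$, and together they span it, closing the induction.

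The main obstacle will be level $n=\ell$, where the collision first manifests. Because $f^\ell(\gamma_1)=f^\ell(\gamma_2)$, the product $(f^\ell(\gamma_1)-x_0)(f^\ell(\gamma_2)-x_0)$ is the perfect square $(f^\ell(\gamma_1)-x_0)^2$, so the ``global'' discriminant class collapses to the constant $-3$ and the second paragraph's argument must be refined. Instead of the combined class, one needs to analyse the individual discriminants $-3(f(\gamma_1)-y)(f(\gamma_2)-y)$ node by node for $y\in f^{-(\ell-1)}(x_0)$, using $B$ and $C_{\ell-1}$ of \eqref{eq:BCdef} to quantify how each cubic $f(z)-y$ degenerates as $y$ approaches the collided critical value, and to transport the parity in hypothesis~(5) into the correct refined nonsquareness statement. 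It is here that the case $\sqrt{-3}\in K$ versus $\sqrt{-3}\notin K$ enters naturally: the residual factor of $-3$ either is or is not a square globally, which is precisely what distinguishes $Q_{\ell,\infty}$ from the larger $\widetilde{Q}_{\ell,\infty}$ in the conclusion.
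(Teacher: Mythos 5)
Your outline correctly identifies the two main arithmetic inputs (the place $u$ for transitivity/tame ramification and the places $v_n$ for odd-valuation discriminant classes), the role of $\sqrt{-3}$, and the special delicacy at level $n=\ell$. But there are substantial gaps that would prevent this plan from closing.

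First and most seriously, the group-theoretic core is missing. The relative kernel of $Q_{\ell,n}\to Q_{\ell,n-1}$ is a subgroup of $(S_3)^{3^{n-1}}$ cut out by the parity constraints, and it is not a priori generated by one ``sign element'' from a quadratic subextension ramified at $v_n$ plus one ``cube-root element'' from tame inertia at $u$. The paper devotes all of Section~\ref{sec:surj} to precisely this: Lemma~\ref{lem:evengen} shows how, from a single element $\theta$ acting as a pair of transpositions at level $n$ plus arboreal double transitivity, one can manufacture all the required products of transpositions and $3$-cycles via conjugation; and Theorem~\ref{thm:gen} packages the hypotheses that make this go through. Your ``together they span it'' glosses over exactly this work, and without an argument of this kind the induction does not close.

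Second, the recursion mechanism is absent. Theorem~\ref{thm:gen} requires not just that $\Gal(K_{n-1}/K)$ be all of $Q_{\ell,n-1}$, but also that the Galois group of the subtree rooted at a level-1 node $y$ — i.e.\ $\Gal(K(f^{-(n-1)}(y))/K(y))$ — be all of $Q_{\ell,n-1}$. This forces one to prove that the arithmetic hypotheses propagate from $(K,x_0)$ to $(K(\alpha),\alpha)$ for every $\alpha\in\Orb_f^-(x_0)$; that is the content of Lemma~\ref{lem:inherit} (built on Lemma~\ref{lem:inherit1}), which you do not address at all. Without this, your induction only controls Galois groups over the ground field $K$ and cannot certify the second bullet of Theorem~\ref{thm:gen}.

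Third, your treatment of $n=\ell$ is too vague to be credible. The paper does not merely ``analyse the individual discriminants node by node''; it constructs the explicit quartic of Proposition~\ref{prop:quartic} whose roots $\theta_i=\pm\delta_1\delta_2\pm\delta_1\delta_3\pm\delta_2\delta_3$ encode the level-$(\ell-1)$ parities at the three first-level nodes simultaneously, then shows via a Newton-polygon computation at $v_\ell$ (using precisely the coefficient identities $s_{\ell,2}^2-4s_{\ell,1}s_{\ell,3} = -\tfrac{4B}{3A^3}F_{\ell-1}^2(f^\ell(\gamma)-x_0)^2$) that this quartic has no $L_{\ell-1}$-rational root, hence $\Gal(L_\ell/L_{\ell-1})$ escapes every conjugate $H_{\ell,j}$. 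Identifying the correct auxiliary polynomial and the correct valuation to apply is the decisive technical step here, and it is not visible in your sketch. You would also need the normalization $f(z)=Az^3+Bz+1$ of Proposition~\ref{prop:collide} to make any of these coefficient computations manageable, and you do not mention this reduction.
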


Here $M_K^0$ denotes the set of non-archimedean places of $K$, each normalized to have image equal to $\ZZ$.
That is, each $v\in M_K^0$ is the valuation $v=v_{\fp}$ of a prime ideal $\fp$ of the ring of integers $\calO_K$,
normalized so that $v_{\fp}(\pi)=1$, where $\pi\in \calO_K$ is a uniformizer for $\fp$.
Alternatively, if $K$ is a function field, we allow $v$ to be a place at $\infty$,
such as the negative degree valuation.

The outline of the paper is as follows.
In Section~\ref{sec:prelim}, after presenting our notation and describing labelings
of the tree $T_{3,\infty}$, we define the sign $\sgn_n(\sigma,y)\in\{\pm 1\}$
to describe the parity of $\sigma\in\Aut(T_{3,\infty})$ acting on the $3^n$ nodes sitting $n$ levels above
the node $y$. We then use $\sgn_n$ to define
our groups $Q_{\ell,\infty}$ and $\widetilde{Q}_{\ell,\infty}$.
In Section~\ref{sec:collideQ}, we prove several results
about the signs $\sgn_n$ and about iterated discriminants $\Delta(f^n-x_0)$,
and we use the resulting formulas to prove Theorem~\ref{thm:main1}.
Section~\ref{sec:surj} is devoted to group-theoretic results needed
to prove Theorem~\ref{thm:main2}.
In Section~\ref{sec:tech}, we reduce to the case that $f$ is of the form
$f(z)=Az^3+Bz+1$, and we observe in Remark~\ref{rem:BC} that this use of $B$
agrees with the quantity in equation~\eqref{eq:BCdef} above.
Also in Section~\ref{sec:tech}, we prove a number of elementary computational
formulas for the coefficients of various polynomials needed in the proof of Theorem~\ref{thm:main2}.
We then combine these formulas with the group theory of Section~\ref{sec:surj}
to prove Theorem~\ref{thm:main2} in Section~\ref{sec:galproof}.
Finally, in Section~\ref{sec:ex}, we present a few examples.

\section{Preliminaries}
\label{sec:prelim}

We set the following notation throughout this paper.
\begin{tabbing}
\hspace{8mm} \= \hspace{15mm} \=  \kill
\> $K$: \> a field of characteristic different from $2$ and $3$,
with algebraic closure $\Kbar$ \\
\> $\ell$: \> an integer $\ell\geq 2$ \\
\> $f$: \> a cubic polynomial $f(z) \in K[z]$ \\
\> $x_0$: \> an element of $K$, to serve as the root of our preimage tree \\
\> $T_{3,n}$: \> a ternary rooted tree, extending $n$ levels above its root node \\
\> $T_{3,\infty}$: \> a ternary rooted tree, extending infinitely above its root node \\
\> $K_n$: \> for each $n\geq 0$, the extension field $K_n:=K(f^{-n}(x_0))$ \\
\> $K_\infty$: \> the union $K_{\infty} = \bigcup_{n\geq 1} K_n$ in $\Kbar$ \\
\> $G_{K,n}$: \> the Galois group $\Gal(K_n/K)$ \\
\> $G_{K,\infty}$: \> the Galois group $\Gal(K_\infty/K)$
\end{tabbing}

A \emph{labeling} of the tree $T_{3,n}$ or $T_{3,\infty}$ is an assignment of
a unique label $s_1 s_2 \ldots s_m$ of $m$ symbols $s_i\in\{0,1,2\}$ to each node $y$
at level $m$ of the tree, in such a way that the node immediately below $y$ has label
$s_1 s_2 \ldots s_{m-1}$. (We assign the empty label $()$ to the root node.)
See Figure~\ref{fig:labeltree}.

%
%
%

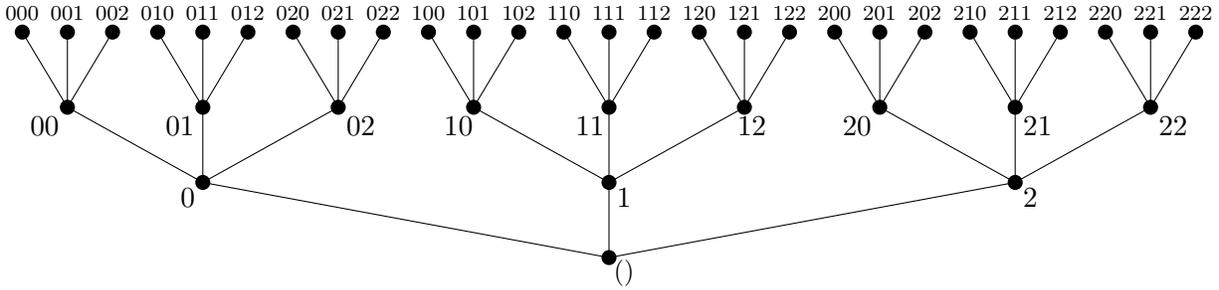
\begin{figure}
\begin{tikzpicture}
\path[draw] (0,0) -- (0,3);
\path[draw] (-5.4,3) -- (-5.4,1) -- (0,0) -- (5.4,1) -- (5.4,3);
\path[draw] (-7.2,3) -- (-7.2,2) -- (-5.4,1) -- (-3.6,2) -- (-3.6,3);
\path[draw] (-1.8,3) -- (-1.8,2) -- (0,1) -- (1.8,2) -- (1.8,3);
\path[draw] (7.2,3) -- (7.2,2) -- (5.4,1) -- (3.6,2) -- (3.6,3);
\path[fill] (0,0) circle (0.1);
\path[fill] (0,1) circle (0.1);
\path[fill] (0,2) circle (0.1);
\path[fill] (0,3) circle (0.1);
\path[fill] (-5.4,1) circle (0.1);
\path[fill] (-5.4,2) circle (0.1);
\path[fill] (-5.4,3) circle (0.1);
\path[fill] (5.4,1) circle (0.1);
\path[fill] (5.4,2) circle (0.1);
\path[fill] (5.4,3) circle (0.1);
\path[fill] (-7.2,2) circle (0.1);
\path[fill] (-7.2,3) circle (0.1);
\path[fill] (7.2,2) circle (0.1);
\path[fill] (7.2,3) circle (0.1);
\path[fill] (-3.6,2) circle (0.1);
\path[fill] (-3.6,3) circle (0.1);
\path[fill] (3.6,2) circle (0.1);
\path[fill] (3.6,3) circle (0.1);
\path[fill] (-1.8,2) circle (0.1);
\path[fill] (-1.8,3) circle (0.1);
\path[fill] (1.8,2) circle (0.1);
\path[fill] (1.8,3) circle (0.1);
\path[draw] (7.8,3) -- (7.2,2) -- (6.6,3);
\path[draw] (6,3) -- (5.4,2) -- (4.8,3);
\path[draw] (4.2,3) -- (3.6,2) -- (3,3);
\path[draw] (2.4,3) -- (1.8,2) -- (1.2,3);
\path[draw] (-.6,3) -- (0,2) -- (.6,3);
\path[draw] (-7.8,3) -- (-7.2,2) -- (-6.6,3);
\path[draw] (-6,3) -- (-5.4,2) -- (-4.8,3);
\path[draw] (-4.2,3) -- (-3.6,2) -- (-3,3);
\path[draw] (-2.4,3) -- (-1.8,2) -- (-1.2,3);
\path[fill] (7.8,3) circle (0.1);
\path[fill] (6.6,3) circle (0.1);
\path[fill] (6,3) circle (0.1);
\path[fill] (4.8,3) circle (0.1);
\path[fill] (4.2,3) circle (0.1);
\path[fill] (3,3) circle (0.1);
\path[fill] (2.4,3) circle (0.1);
\path[fill] (1.2,3) circle (0.1);
\path[fill] (.6,3) circle (0.1);
\path[fill] (-7.8,3) circle (0.1);
\path[fill] (-6.6,3) circle (0.1);
\path[fill] (-6,3) circle (0.1);
\path[fill] (-4.8,3) circle (0.1);
\path[fill] (-4.2,3) circle (0.1);
\path[fill] (-3,3) circle (0.1);
\path[fill] (-2.4,3) circle (0.1);
\path[fill] (-1.2,3) circle (0.1);
\path[fill] (-.6,3) circle (0.1);
\node (x0) at (0.2,-0.2) {\footnotesize $()$};
\node (a0) at (-5.6,0.8) {\small $0$};
\node (a1) at (0.2,0.8) {\small $1$};
\node (a2) at (5.6,0.8) {\small $2$};
\node (b00) at (-7.5,1.75) {\small $00$};
\node (b01) at (-5.7,1.75) {\small $01$};
\node (b02) at (-3.3,1.75) {\small $02$};
\node (b10) at (-2,1.75) {\small $10$};
\node (b11) at (-.25,1.75) {\small $11$};
\node (b12) at (1.9,1.75) {\small $12$};
\node (b20) at (3.3,1.75) {\small $20$};
\node (b21) at (5.7,1.75) {\small $21$};
\node (b22) at (7.5,1.75) {\small $22$};
\node (c000) at (-7.8,3.25) {\tiny $000$};
\node (c001) at (-7.2,3.25) {\tiny $001$};
\node (c002) at (-6.6,3.25) {\tiny $002$};
\node (c010) at (-6.0,3.25) {\tiny $010$};
\node (c011) at (-5.4,3.25) {\tiny $011$};
\node (c012) at (-4.8,3.25) {\tiny $012$};
\node (c020) at (-4.2,3.25) {\tiny $020$};
\node (c021) at (-3.6,3.25) {\tiny $021$};
\node (c022) at (-3,3.25) {\tiny $022$};
\node (c100) at (-2.4,3.25) {\tiny $100$};
\node (c101) at (-1.8,3.25) {\tiny $101$};
\node (c102) at (-1.2,3.25) {\tiny $102$};
\node (c110) at (-0.6,3.25) {\tiny $110$};
\node (c111) at (0,3.25) {\tiny $111$};
\node (c112) at (0.6,3.25) {\tiny $112$};
\node (c120) at (1.2,3.25) {\tiny $120$};
\node (c121) at (1.8,3.25) {\tiny $121$};
\node (c122) at (2.4,3.25) {\tiny $122$};
\node (c200) at (3,3.25) {\tiny $200$};
\node (c201) at (3.6,3.25) {\tiny $201$};
\node (c202) at (4.2,3.25) {\tiny $202$};
\node (c210) at (4.8,3.25) {\tiny $210$};
\node (c211) at (5.4,3.25) {\tiny $211$};
\node (c212) at (6,3.25) {\tiny $212$};
\node (c220) at (6.6,3.25) {\tiny $220$};
\node (c221) at (7.2,3.25) {\tiny $221$};
\node (c222) at (7.8,3.25) {\tiny $222$};
\end{tikzpicture}
\caption{Labeling the tree $T_{3,3}$}
\label{fig:labeltree}
\end{figure}

The backward orbit $\Orb^-_f(x_0)$ of $x_0\in K$ has a natural structure of $T_{3,\infty}$,
with $y\in f^{-m}(x_0)$ corresponding to a node at level $m$ of the tree, connected to $f(y)\in f^{-(m-1)}(x_0)$
at level $m-1$.
Having fixed a labeling of this tree,
we will often abuse notation and refer to the value $y\in f^{-m}(x_0)$
interchangeably with its label $s_1s_2\ldots s_m$.
(We will generally assume that $x_0$ is not periodic, and that there are no critical points in
the backward orbit of $x_0$, so that no two different nodes of the tree correspond to the same element of $\Kbar$.)


Having fixed a labeling of the tree $T_{3,\infty}$,
then for any node $y$ of the tree and any positive integer $m\geq 1$,
the $3^m$ nodes that are $m$ levels above $y$ have labels
$y s_1 s_2 \ldots s_m$, with each $s_i\in\{0,1,2\}$.
For any automorphism $\sigma\in\Aut(T_{3,\infty})$ of the (rooted) tree,
we have
\[ \sigma(y s_1 s_2 \ldots s_m) = \sigma(y) t_1 t_2 \ldots t_m,
\quad\text{for some } t_1,\ldots,t_m\in\{0,1,2\}. \]
Thus, $\sigma$ and $y$ together induce a bijective function from $\{0,1,2\}^m$ to itself,
sending $(s_1, \ldots, s_m)$ to $(t_1, \ldots, t_m)$.
We define the \emph{$m$-th sign} of $\sigma$ above $y$,
denoted $\sgn_m(\sigma,y)$, to be the sign
of this permutation of  $\{0,1,2\}^m$
--- that is, $+1$ if the permutation is of even parity, or $-1$ if it is odd.
Note that if $\sigma(y)\neq y$, then the value of $\sgn_m(\sigma,y)$
depends on the choice of labeling, which is why we fixed a labeling in advance.

\begin{defin}
\label{def:QGroup}
Fix a labeling of the tree $T_{3,\infty}$. Let $\ell\geq 2$ be an integer.
We define $\widetilde{Q}_{\ell,\infty}$ to be the set of all $\sigma\in\Aut(T_{3,\infty})$ for which
\[ \sgn_{\ell}(\sigma,y) \sgn_{\ell-1}(\sigma,y)
= \sgn_{\ell}(\sigma,x_0) \sgn_{\ell-1}(\sigma,x_0)
\quad \text{for every node } y \text{ of } T_{3,\infty} .\]
We also define $Q_{\ell,\infty}$ to be the set of all $\sigma\in \widetilde{Q}_{\ell,\infty}$
for which this common product is $+1$.
\end{defin}

\begin{thm}
\label{thm:QGroup}
Fix a labeling of the tree $T_{3,\infty}$ and an integer $\ell\geq 2$.
Then $\widetilde{Q}_{\ell,\infty}$ is a subgroup of $\Aut(T_{3,\infty})$ of infinite index,
and $Q_{\ell,\infty}$ is a subgroup of $\widetilde{Q}_{\ell,\infty}$ of index $2$.
\end{thm}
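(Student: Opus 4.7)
The plan is to reduce everything to a clean formula for the sign product $P(\sigma,y) := \sgn_\ell(\sigma,y)\,\sgn_{\ell-1}(\sigma,y)$. The first step is to establish the recursion
\[
\sgn_m(\sigma,y) \;=\; \sgn_1(\sigma,y)\, \prod_{s\in\{0,1,2\}} \sgn_{m-1}(\sigma,ys),
\]
which follows by decomposing the induced permutation of $\{0,1,2\}^m$ as a block permutation of the three ``pages'' indexed by the first coordinate (contributing sign $\sgn_1(\sigma,y)^{3^{m-1}} = \sgn_1(\sigma,y)$, since $3^{m-1}$ is odd) followed by the within-page permutations (of sign $\sgn_{m-1}(\sigma,ys)$ on the source page $s$). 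Unrolling gives $\sgn_m(\sigma,y) = \prod_{z} \sgn_1(\sigma,z)$, with $z$ running over descendants of $y$ at levels $0,1,\dots,m-1$; substituting this into $P(\sigma,y)$, every node at levels $0,\dots,\ell-2$ above $y$ appears twice and cancels, leaving
\[
P(\sigma,y) \;=\; \prod_{z\in L(y)} \sgn_1(\sigma,z),
\]
where $L(y)$ is the set of $3^{\ell-1}$ nodes at level $\ell-1$ above $y$. Thus $\widetilde{Q}_{\ell,\infty}$ is precisely the locus where $y\mapsto P(\sigma,y)$ is constant, and $Q_{\ell,\infty}$ is the locus where this constant equals $+1$.

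Next I would apply the cocycle $\sgn_m(\sigma\tau,y) = \sgn_m(\sigma,\tau(y))\,\sgn_m(\tau,y)$ (immediate from writing $\sgn_m$ as the sign of an induced permutation on $\{0,1,2\}^m$), which yields $P(\sigma\tau,y) = P(\sigma,\tau(y))\,P(\tau,y)$. If $\sigma,\tau\in\widetilde{Q}_{\ell,\infty}$, this becomes $P(\sigma,x_0)\,P(\tau,x_0)$, independent of $y$, so $\sigma\tau\in\widetilde{Q}_{\ell,\infty}$; applying the cocycle to $\sigma\cdot\sigma^{-1}=\mathrm{id}$ yields closure under inversion. Since every tree automorphism fixes $x_0$, the map $\sigma\mapsto P(\sigma,x_0)$ is a homomorphism $\widetilde{Q}_{\ell,\infty}\to\{\pm 1\}$ with kernel $Q_{\ell,\infty}$, so $[\widetilde{Q}_{\ell,\infty}:Q_{\ell,\infty}]\in\{1,2\}$. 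To see the index is exactly $2$, let $\epsilon$ be the automorphism that swaps children $0$ and $1$ at every node of $T_{3,\infty}$: then $\sgn_1(\epsilon,z)\equiv -1$, so $P(\epsilon,y) = (-1)^{3^{\ell-1}} = -1$ for every $y$, placing $\epsilon$ in $\widetilde{Q}_{\ell,\infty}\setminus Q_{\ell,\infty}$.

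Finally, for infinite index of $\widetilde{Q}_{\ell,\infty}$ in $\Aut(T_{3,\infty})$, I would exhibit representatives of infinitely many distinct cosets along the rightmost ray. For $n\geq 1$, let $z_n$ be the node labeled $2^{\ell-1+n}$ and let $\sigma_n$ be the automorphism swapping the $0$- and $1$-subtrees at $z_n$ and acting trivially elsewhere. Because $z_m$ lies in the $2$-subtree of $z_n$ whenever $m>n$, the $\sigma_n$ pairwise commute and each $\sigma_k$ fixes every $z_j$ with $j\ne k$; hence $\sgn_1(\sigma_n\sigma_m,z) = -1$ precisely on $\{z_n,z_m\}$ when $n\ne m$. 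The formula for $P$ then yields $P(\sigma_n\sigma_m, 2^n) = -1$ but $P(\sigma_n\sigma_m, x_0) = +1$, so $\sigma_n\sigma_m^{-1}=\sigma_n\sigma_m\notin\widetilde{Q}_{\ell,\infty}$; similarly $\sigma_n\notin\widetilde{Q}_{\ell,\infty}$ for $n\geq 1$. Hence the cosets $\widetilde{Q}_{\ell,\infty},\sigma_1\widetilde{Q}_{\ell,\infty},\sigma_2\widetilde{Q}_{\ell,\infty},\dots$ are pairwise distinct, giving infinite index. The most delicate step is the sign recursion and the telescoping cancellation that produce the identity $P(\sigma,y) = \prod_{L(y)}\sgn_1(\sigma,z)$; once that is in hand, everything else follows cleanly from cocycle arithmetic and the explicit witnesses $\epsilon,\sigma_1,\sigma_2,\dots$.
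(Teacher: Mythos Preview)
Your proof is correct, and it follows a somewhat different route from the paper's. Both arguments share the backbone of the cocycle identity $\sgn_m(\sigma\tau,y)=\sgn_m(\sigma,\tau(y))\,\sgn_m(\tau,y)$, use it to show closure under products and inverses, and then exhibit explicit witnesses for surjectivity onto $\{\pm 1\}$ and for infinitely many distinct cosets. The substantive difference is your first step: you derive the closed formula $P(\sigma,y)=\prod_{z\in L(y)}\sgn_1(\sigma,z)$ by unrolling the recursion $\sgn_m(\sigma,y)=\sgn_1(\sigma,y)\prod_s\sgn_{m-1}(\sigma,ys)$ and telescoping. The paper never isolates this formula; it works directly with $\sgn_\ell$ and $\sgn_{\ell-1}$ via the cocycle. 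Your formula pays off in the explicit constructions: checking $P(\epsilon,y)\equiv -1$ and computing $P(\sigma_n\sigma_m,\cdot)$ reduce to counting which nodes of $L(y)$ carry a nontrivial $\sgn_1$, which is completely mechanical. The paper instead builds its witness for index~$2$ as a level-parity swap and its infinite-coset family from single transpositions $\tau_m$ at level $m$, arguing more informally that $\tau_\ell,\tau_{\ell+2},\tau_{\ell+4},\dots$ land in distinct cosets. Your choice to place the $\sigma_n$ along the rightmost ray (so that they commute and each fixes the others' base nodes) is a clean way to make the coset computation rigorous. In short: same skeleton, but your telescoped formula for $P$ gives a more uniform and explicitly checkable endgame, at the cost of one extra preparatory lemma.
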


\begin{proof}
We begin by claiming that
for any $\sigma,\tau\in\Aut(T_{3,\infty})$, any node $y$ of $T_{3, \infty}$,
and any $n\geq 1$, we have
\begin{equation}
\label{eq:Parident}
\sgn_{n}(\sigma\tau,y)
= \sgn_{n}(\sigma,\tau(y)) \cdot \sgn_{n}(\tau,y).
\end{equation}
To see this, observe that as $\tau$ maps the nodes $n$ levels above $y$ to those above $\tau(y)$,
it permutes the labels with sign $\sgn_{n}(\tau,y)$.
Then $\sigma$ maps those nodes to the nodes above $\sigma(\tau(y))$, permuting their labels
with sign  $\sgn_{n}(\sigma,\tau(y))$. The claim of equation~\eqref{eq:Parident} follows.

We now show that $\widetilde{Q}_{\ell,\infty}$ and $Q_{\ell,\infty}$ are subgroups.
The identity automorphism $e$ clearly belongs to $Q_{\ell,\infty}\subseteq\widetilde{Q}_{\ell,\infty}$.
Given $\sigma,\tau \in \widetilde{Q}_{\ell}$, then for any node $y$ of the tree, equation~\eqref{eq:Parident} yields
\begin{align*}
\sgn_{\ell}(\sigma\tau,y) \sgn_{\ell-1}(\sigma\tau,y) &= 
\sgn_{\ell}(\sigma,\tau(y)) \sgn_{\ell}(\tau,y) \cdot \sgn_{\ell-1}(\sigma,\tau(y)) \sgn_{\ell-1}(\tau,y)
\\
&= \sgn_{\ell}(\sigma,\tau(x_0)) \sgn_{\ell}(\tau,x_0) \cdot \sgn_{\ell-1}(\sigma,\tau(x_0)) \cdot \sgn_{\ell-1}(\tau,x_0)
\\
&= \sgn_{\ell}(\sigma\tau,x_0) \sgn_{\ell-1}(\sigma\tau,x_0),
\end{align*}
where in the second equality we have used the defining property of $\widetilde{Q}_{\ell,\infty}$ to deduce
\[ \sgn_{\ell}(\sigma,\tau(y))\sgn_{\ell-1}(\sigma,\tau(y))
= \sgn_{\ell}(\sigma,x_0)\sgn_{\ell-1}(\sigma,x_0)
= \sgn_{\ell}(\sigma,\tau(x_0))\sgn_{\ell-1}(\sigma,\tau(x_0)) .\]
It follows that $\sigma\tau\in\widetilde{Q}_{\ell,\infty}$.
A similar computation shows that
if $\sigma,\tau\in Q_{\ell,\infty}$, then $\sigma\tau\in Q_{\ell,\infty}$.

Moveover, given any $\sigma\in \widetilde{Q}_{\ell,\infty}$, then choosing
$\tau=\sigma^{-1}\in\Aut(T_{3,\infty})$, equation~\eqref{eq:Parident} yields
\begin{align*}
\sgn_{\ell}(\sigma^{-1},y) \sgn_{\ell-1}(\sigma^{-1},y)
&= \sgn_{\ell}(\sigma, \sigma^{-1}(y))\sgn_{\ell-1}(\sigma, \sigma^{-1}(y))
\\
&= \sgn_{\ell}(\sigma, \sigma^{-1}(x_0))\sgn_{\ell-1}(\sigma, \sigma^{-1}(x_0))
\\
& = \sgn_{\ell}(\sigma^{-1},x_0) \sgn_{\ell-1}(\sigma^{-1},x_0),
\end{align*}
and therefore $\sigma^{-1}\in\widetilde{Q}_{\ell,\infty}$.
Similarly, $Q_{\ell,\infty}$ is also closed under inverses, so both
$\widetilde{Q}_{\ell,\infty}$ and $Q_{\ell,\infty}$ are subgroups of $\Aut(T_{3,\infty})$.

For each integer $m\geq \ell$, pick $\tau_m\in\Aut(T_{3,\infty})$ that acts as a transposition of two
nodes at level $m$ of the tree (and hence also swaps the subtrees rooted at those two nodes)
but otherwise acts as the identity. (That is, there is a node $y_{m}$ at level $m-1$ for which $\tau_m$
swaps the nodes with labels $y_{m} 0 w$ and $y_{m} 1 w$, for any finite word $w$ in the symbols $0,1,2$;
and $\tau(x)=x$ for every other node of the tree.)
Then if $x_m$ is the node $\ell-2$ levels below $y_{m-1}$, we have
$\sgn_{\ell}(\tau_m,x_m)\sgn_{\ell-1}(\tau_m,x_m)=(+1)(-1)=-1$, whereas this product
is $+1$ at most other nodes. It follows quickly that each of $\tau_{\ell}, \tau_{\ell+2}, \tau_{\ell+4},\ldots$
lies in a different coset of $\widetilde{Q}_{\ell,\infty}$.
Therefore, $[\Aut(T_{3,\infty}) : \widetilde{Q}_{\ell,\infty}]=\infty$.

Finally, define $\psi:\widetilde{Q}_{\ell,\infty} \to \{\pm 1\}$ by
$\sigma\mapsto \sgn_{\ell}(\sigma,x_0) \sgn_{\ell-1}(\sigma,x_0)$.
Then $\psi$ is a homomorphism by equation~\eqref{eq:Parident},
and its kernel is $Q_{\ell,\infty}$.
To see that $\psi$ is onto,
define $\rho\in\Aut(T_{3,\infty})$ acting by transposing the labels $0$ and $1$
at every even-numbered level of the tree. Then
$\sgn_{\ell}(\rho,y) \sgn_{\ell-1}(\rho,y)=-1$ for every node $y$ of the tree,
so $\rho\in \widetilde{Q}_{\ell,\infty}$ with $\psi(\rho)=-1$.
Thus, $\psi$ is a surjective homomorphism with kernel $Q_{\ell,\infty}$,
whence $[\widetilde{Q}_{\ell,\infty}:Q_{\ell,\infty}] = |\{\pm 1\}|=2$.
\end{proof}

%

\section{Proving Theorem~\ref{thm:main1}}
\label{sec:collideQ}

\subsection{Iterated discriminants}
\label{ssec:iterdisc}
Our analysis of arboreal Galois groups will require extensive use of discriminants.
Let $g(z)=b_d z^d + \cdots + b_1 z + b_0 \in K[z]$ be a polynomial of degree $d\geq 1$.
Writing $g(z)=b_d \prod_{i=1}^d (z-\beta_i)$ with $\beta_i\in\Kbar$, recall that the discriminant of $g$ is
\[ \Delta(g) := b_d^{2d-2} \prod_{i<j} (\beta_i - \beta_j)^2 \in K  .\]
In the context of arboreal Galois groups, consider a polynomial $f(z)\in K[z]$
of degree $d\geq 2$ with lead coefficient $A\in K^{\times}$, and let $x_0\in K$.
Then for every $n\geq 1$, we have
\begin{equation}
\label{eq:polyiter}
\Delta\big(f^n- x_0 \big)
= (-1)^{d^n (d-1)/2} d^{d^n} A^{d^{2n-1}-1} \big( \Delta(f^{n-1}-x_0)\big)^d
\prod_{f'(\gamma)=0} \big( f^n(\gamma)-x_0 \big),
\end{equation}
where the product is over all finite critical points of $f$,
repeated according to multiplicity.
See, for example, \cite[Remark~3.8]{BenDie}, or
\cite[Proposition~3.2]{AHM}, or \cite[Theorem~3.2]{JonMan}.
Here, for $n=1$, we consider $\Delta(f^0-x_0)=\Delta(z-x_0)$ to be $1$, so that
\[ \Delta(f-x_0) = (-1)^{d (d-1)/2} d^{d} A^{d-1} \prod_{f'(\gamma)=0} \big( f(\gamma)-x_0 \big) .\]

When $f$ is a cubic polynomial with lead coefficient $A\in K^{\times}$
and critical points $\gamma_1,\gamma_2\in\Kbar$, 
equation~\eqref{eq:polyiter} becomes
\begin{equation}
\label{eq:cubiciter}
\Delta\big(f^n- x_0 \big)
= - 3^{3^n} A^{3^{2n-1}-1}\big( \Delta(f^{n-1}-x_0)\big)^3
\cdot \big( f^n(\gamma_1)-x_0 \big) \big( f^n(\gamma_2)-x_0 \big).
\end{equation}

\subsection{Sign relations for tree automorphisms}
\label{ssec:partree}
For the rest of Section~\ref{sec:collideQ}, consider a cubic polynomial $f\in K[z]$
as in Theorem~\ref{thm:main1}.
The two critical points $\gamma_1,\gamma_2\in\Kbar$ of $f$
are roots of the quadratic polynomial $f'(z)$,
so either they are both $K$-rational, or else they are Galois conjugate over $K$.
(The field $K(\gamma_1)=K(\gamma_2)$ is separable and hence Galois over $K$, since $\charact K\neq 2$.)
Either way, the common value $f^{\ell}(\gamma_1)=f^{\ell}(\gamma_2)$ is fixed by $\Gal(K(\gamma_1)/K)$
and hence is $K$-rational, proving the first claim of Theorem~\ref{thm:main1}.



Also for the rest of Section~\ref{sec:collideQ}, we will identify the backward orbit $\Orb_f^-(x_0)$
with the ternary rooted tree $T_{3,\infty}$. Given a labeling of this tree,
it will be convenient to define
\[ S(\sigma,y):= \sgn_{\ell}(\sigma,y) \sgn_{\ell-1}(\sigma,y) \frac{\sigma(\sqrt{-3})}{\sqrt{-3}} \in \{\pm 1\} \]
for any node $y$ of the tree and any $\sigma\in G_{K,\infty}$.

\begin{lemma}
\label{lem:sgncond}
Fix any labeling of the tree $\Orb_f^{-}(x_0)\cong T_{3,\infty}$. Let $y\in \Orb_f^{-}(x_0)$,
and let $\sigma,\sigma'\in G_{K,\infty}$.
\begin{enumerate}
\item If $\sigma(y)=y$, then
$\dsps S(\sigma,y)=+1$.
\item $S(\sigma' \sigma,y) = S(\sigma',\sigma(y)) \cdot S(\sigma,y)$.
\item $S(\sigma^{-1},\sigma(y)) = S(\sigma,y)$.
\item If $\sigma(y)=\sigma'(y)$, then
$\dsps S(\sigma,y) = S(\sigma',y)$.
\end{enumerate}
\end{lemma}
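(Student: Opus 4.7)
\medskip
\noindent\textbf{Proof proposal.}
The plan is to derive (1) directly from the iterated discriminant formula \eqref{eq:cubiciter} together with the collision hypothesis, and then to deduce (2)--(4) formally from (1) and the cocycle identity \eqref{eq:Parident}.

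For (1), I would first observe that if $\sigma\in G_{K,\infty}$ fixes $y$, then $\sigma$ fixes the entire simple extension $K(y) = K[y]$ pointwise, and in particular fixes the discriminant $\Delta_n := \Delta(f^n - y) \in K(y)$ for each $n \geq 0$. Fixing square roots $D_n := \sqrt{\Delta_n}\in\Kbar$, the standard interpretation of the discriminant gives $\sgn_n(\sigma,y) = \sigma(D_n)/D_n$ when $\sigma$ fixes $y$. The key computation is then that, applying \eqref{eq:cubiciter} at level $\ell$ and using the collision $f^\ell(\gamma_1) = f^\ell(\gamma_2) =: v$ (which is $K$-rational by the first claim of Theorem~\ref{thm:main1}),
\[
\Delta_\ell \cdot \Delta_{\ell-1}
= -3 \cdot \bigl[\, 3^{(3^\ell - 1)/2} \, A^{(3^{2\ell-1}-1)/2} \, \Delta_{\ell-1}^{2} \, (v - y) \,\bigr]^2 ,
\]
where the exponents $(3^\ell - 1)/2$ and $(3^{2\ell-1}-1)/2$ are integers because $3$ is odd. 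Hence $D_\ell D_{\ell-1} = \pm \sqrt{-3} \cdot M$ for an element $M \in K(y)$, and since $\sigma$ fixes $M$,
\[
\sgn_\ell(\sigma,y)\sgn_{\ell-1}(\sigma,y)
= \frac{\sigma(D_\ell D_{\ell-1})}{D_\ell D_{\ell-1}}
= \frac{\sigma(\sqrt{-3})}{\sqrt{-3}} .
\]
Multiplying by $\sigma(\sqrt{-3})/\sqrt{-3}$ gives $S(\sigma,y) = +1$, as desired.

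For (2), I would combine the cocycle identity \eqref{eq:Parident} applied to both $\sgn_\ell$ and $\sgn_{\ell-1}$ with the multiplicative identity $(\sigma'\sigma)(\sqrt{-3})/\sqrt{-3} = (\sigma'(\sqrt{-3})/\sqrt{-3}) \cdot (\sigma(\sqrt{-3})/\sqrt{-3})$, which holds because $\sqrt{-3}$ is fixed up to sign by any automorphism. Regrouping the four sign factors and the two square-root ratios yields $S(\sigma',\sigma(y)) \cdot S(\sigma,y)$. For (3), I would specialize (2) to $\sigma' = \sigma^{-1}$ and apply (1) (since $\sigma^{-1}\sigma$ fixes every node) to get $1 = S(\sigma^{-1},\sigma(y))\,S(\sigma,y)$; since $S$ takes values in $\{\pm 1\}$, this is the same as $S(\sigma^{-1},\sigma(y)) = S(\sigma,y)$. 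For (4), I would set $\tau = (\sigma')^{-1}\sigma$, which fixes $y$, so (1) gives $S(\tau,y) = 1$; then (2) applied to $\sigma = \sigma'\tau$ gives $S(\sigma,y) = S(\sigma',\tau(y))\,S(\tau,y) = S(\sigma',y)$.

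The main obstacle is entirely in (1): one must recognize that the \emph{product} $\Delta_\ell \Delta_{\ell-1}$, rather than either discriminant alone, simplifies to $-3$ times a square in $K(y)$. The collision hypothesis is exactly what forces the critical-image factor $(f^\ell(\gamma_1)-y)(f^\ell(\gamma_2)-y)$ to be a square, and the parity of the exponent of $3$ in \eqref{eq:cubiciter} combined with multiplying by $\Delta_{\ell-1}$ (to absorb the cube $\Delta_{\ell-1}^3$ into a fourth power) is what leaves the residual factor of $-3$. Once that identity is in hand, the rest of the lemma is bookkeeping.
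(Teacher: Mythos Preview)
Your proposal is correct and follows essentially the same approach as the paper: for (1) you compute $\Delta_\ell\Delta_{\ell-1} = -3\cdot(\text{square in }K(y))$ via equation~\eqref{eq:cubiciter} and the collision hypothesis, exactly as the paper does with its quantity $D_\ell(y)$, and your derivations of (2)--(4) from \eqref{eq:Parident} and (1) match the paper's arguments almost verbatim. Your proof of (4), writing $\sigma = \sigma'\tau$ with $\tau(y)=y$ and applying (1) and (2) directly, is a slightly cleaner reorganization of the paper's chain through (3), but the content is the same.
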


\begin{proof}
\textbf{Statement (1)}:
By equation~\eqref{eq:cubiciter} with $y$ in place of $x_0$, we have
\[ \Delta(f^{\ell}-y)\Delta(f^{\ell-1}-y) = - 3^{3^\ell}A^{3^{2\ell-1}-1} \big(\Delta(f^{\ell-1}-y)\big)^4
\big(f^{\ell}(\gamma_1) - y\big)\big(f^{\ell}(\gamma_2) - y\big) =-3 D_\ell(y)^2, \]
where
\begin{equation}
\label{eq:Dellydef}
D_{\ell}(y):=3^{(3^\ell - 1)/2} A^{(3^{2\ell-1}-1)/2} \big(\Delta(f^{\ell-1}-y)\big)^2 \big(f^{\ell}(\gamma_1) - y\big)\in K(y),
\end{equation}
since $f^{\ell}(\gamma_1) = f^{\ell}(\gamma_2)\in K$.

Because $\sigma$ fixes $y$, we have $\sigma\in\Gal(K_{\infty}/K(y))$, and it permutes the roots
both of $f^{\ell}-y$ and of $f^{\ell-1}-y$. The signs of those two permutations are
\[ \sgn_{\ell}(\sigma,y) = \frac{\sigma(\sqrt{\Delta(f^{\ell}-y)})}{\sqrt{\Delta(f^{\ell}-\sigma(y))}}
\quad \text{and} \quad
\sgn_{\ell-1}(\sigma,y) = \frac{\sigma(\sqrt{\Delta(f^{\ell-1}-y)})}{\sqrt{\Delta(f^{\ell-1}-\sigma(y))}}.\]
(Here, we define $\sqrt{\Delta(f^n-w)}$ as the appropriate power of
the lead coefficient multiplied by $\prod_{i<j} (\alpha_i-\alpha_j)$, where $\alpha_i$ ranges through the
roots of $f^n-w$, with the ordering determined by the labeling of the nodes of the tree.)
Thus,
\[ \sgn_{\ell}(\sigma,y) \sgn_{\ell-1}(\sigma,y)
= \frac{\sigma\big(\sqrt{-3}\cdot D_{\ell}(y)\big)}{\sqrt{-3}\cdot D_{\ell}\big(\sigma(y)\big)}
= \frac{\sigma\big(\sqrt{-3}\big)\cdot D_{\ell}\big(\sigma(y)\big)}{\sqrt{-3}\cdot D_{\ell}\big(\sigma(y)\big)}
= \frac{\sigma(\sqrt{-3})}{\sqrt{-3}}.\]
It follows that $S(\sigma,y)=+1$.

\textbf{Statement (2)}: This is immediate from equation~\eqref{eq:Parident}
and the definition of $S$, together with the observation that
\[ \frac{\sigma' \sigma(\sqrt{-3})}{\sqrt{-3}} = 
\frac{\sigma'\big(\sigma(\sqrt{-3})\big)}{\sigma(\sqrt{-3})} \cdot \frac{\sigma(\sqrt{-3})}{\sqrt{-3}} = 
\frac{\sigma'(\sqrt{-3})}{\sqrt{-3}} \cdot \frac{\sigma(\sqrt{-3})}{\sqrt{-3}} .\]

\textbf{Statement (3)}: This is immediate from Statement~(2) with $\sigma'=\sigma^{-1}$,
together with the observation that $S(e,y)=+1$, where $e\in G_{K,\infty}$ is the identity element.

\textbf{Statement (4)}: Because $\sigma(y)=\sigma'(y)$, we have
\[ S(\sigma,y) S(\sigma',y) = S\big(\sigma^{-1},\sigma(y)\big) S(\sigma',y) =
S\big(\sigma^{-1},\sigma'(y)\big) S(\sigma',y) =  S(\sigma^{-1} \sigma',y) = +1,\]
where the first equality is by Statement~(3), the third is by Statement~(2), and the fourth is by Statement~(1).
It follows that $S(\sigma,y) = S(\sigma',y)$.
\end{proof}

\subsection{Proving Theorem~\ref{thm:main1}}
\label{ssec:pfthm1}

We already proved the first claim of our first main theorem,
that $f^{\ell}(\gamma_i)$ is $K$-rational,
at the start of Section~\ref{ssec:partree}.
We now turn to the rest of the statement.

\begin{proof}[Proof of Theorem~\ref{thm:main1}]
\textbf{Step 1}.
Our main goal is to constuct a labeling of the tree $\Orb_f^{-}(x_0) \cong T_{3,\infty}$
for which $S(\sigma,y)=+1$ for each $y\in\Orb_f^{-}(x_0)$
and each $\sigma\in G_{K,\infty}$.
To do so, first choose any labeling of the tree.
We will proceed inductively up the tree, making changes to this labeling as we go,
in order to realize the desired embedding of $G_{K,\infty}$ in $\widetilde{Q}_{\ell,\infty}$.

For any integer $m\geq 0$, suppose that we have already ensured that for all nodes $x$
up to level $m-1$, we have $S(\sigma,x)=+1$ for all $\sigma\in G_{K,\infty}$.
Given any node $y$ at level $m$, let $G_{K,\infty}(y)$ denote its Galois orbit, and 
for each $w\in G_{K,\infty}(y)$, choose $\sigma_w\in G_{K,\infty}$ such that $\sigma_w(y)=w$.
Define
\[ W_y :=  \{w\in G_{K,\infty}(y) \, | \, S(\sigma_w,y) = -1 \},\]
and observe that $y\not\in W_y$, by Lemma~\ref{lem:sgncond}(1).

We modify our labeling as follows:
for each $w\in W_y$, pick a node $w'$ lying $\ell-1$ levels above $w$,
and transpose the labels of two of the nodes lying one level above $w'$.
That is, we make a single transposition of labels $\ell$ levels above $w$,
which reverses the sign of $\sgn_{\ell}(\sigma_w,y)$ but does not
affect $\sgn_{i}(\tau,x)$ for any $\tau\in G_{K,\infty}$ and any node $x$ at any level $j$ with $i+j<m+\ell$.
As a result, we now have $S(\sigma_w,y)=+1$ for this new labeling, but we have
not changed $S(\tau,x)$ for any $\tau\in G_{K,\infty}$ and any node $x$ at level $m-1$ or below,
nor for any node $x$ at level $m$ outside the orbit $G_{K,\infty}(y)$.

After having made such a transposition of labels for each $w\in W_y$,
we claim that for any node $x\in G_{K,\infty}(y)$ and any $\tau\in G_{K,\infty}$
we have $S(\tau,x)=+1$. Indeed, setting $z:=\tau(x)\in G_{K,\infty}(y)$,
Lemma~\ref{lem:sgncond}(4) applied to $\tau$ and $\sigma_z \sigma_x^{-1}$ yields
\[ S(\tau,x) = S(\sigma_z \sigma_x^{-1},x)
= S(\sigma_z,y) S(\sigma_x^{-1},x) = S(\sigma_z,y) S(\sigma_x,y) = (+1)(+1)=+1, \]
where the second equality is by Lemma~\ref{lem:sgncond}(2), the third is by Lemma~\ref{lem:sgncond}(3),
and the fourth is by our adjustments above to the labeling.

After applying this same relabeling process to each of the finitely many Galois orbits of nodes at level $m$,
we are left with a labeling for which $S(\sigma,y)=+1$ for every node $y$ at level $m$ of the tree,
while preserving the property that $S(\sigma,x)=+1$ for every node $x$ at lower levels of the tree.
This completes our induction, yielding the desired labeling of the full tree $\Orb_f^{-}(x_0)\cong T_{3,\infty}$.

\medskip

\textbf{Step 2}.
With this labeling now fixed, given any $\sigma\in G_{K,\infty}$ and any node $y$ of of the tree $\Orb_f^{-}(x_0)$,
we have
\[ \frac{\sgn_{\ell}(\sigma,y)\sgn_{\ell-1}(\sigma,y)}{\sgn_{\ell}(\sigma,x_0)\sgn_{\ell-1}(\sigma,x_0)}
= \frac{S(\sigma,y)}{S(\sigma,x_0)} = +1, \]
and hence $\sigma\in \widetilde{Q}_{\ell,\infty}$. Thus, $G_{K,\infty}\subseteq\widetilde{Q}_{\ell,\infty}$.

If $\sqrt{-3}\in K$, then
for any $\sigma\in G_{K,\infty}$ we have $\sigma(\sqrt{-3})=\sqrt{-3}$, and hence
\[ \sgn_{\ell}(\sigma,x_0)\sgn_{\ell-1}(\sigma,x_0)
= \sgn_{\ell}(\sigma,x_0)\sgn_{\ell-1}(\sigma,x_0) \frac{\sigma(\sqrt{-3})}{\sqrt{-3}}
= S(\sigma,x_0)=+1, \]
proving that $\sigma\in Q_{\ell,\infty}$, and hence that $G_{K,\infty}\subseteq Q_{\ell,\infty}$.

Conversely, if $\sqrt{-3}\not\in K$, then because
\[ \sqrt{-3} = D_\ell(x_0)^{-1} \sqrt{\Delta(f^{\ell}-x_0)\Delta(f^{\ell-1}-x_0)} \in K_{\ell} \subseteq K_{\infty} \]
where $D_{\ell}$ is as in equation~\eqref{eq:Dellydef},
there is some $\tau\in G_{K,\infty}$ such that $\sigma(\sqrt{-3})=-\sqrt{-3}$.
Thus, even if we were to choose a different labeling of the tree, we have
\[ \sgn_{\ell}(\sigma,x_0)\sgn_{\ell-1}(\sigma,x_0) = - S(\sigma,x_0) = -1, \]
where the last equality is by Lemma~\ref{lem:sgncond}(1).
Therefore, $G_{K,\infty}$ cannot be isomorphic to a subgroup of $Q_{\ell,\infty}$.
\end{proof}

\section{Generating large subgroups of tree automorphisms}
\label{sec:surj}
To prove Theorem~\ref{thm:main2}, we will need some purely 
group-theoretic results, in order to show that certain sets of
tree automorphisms generate the relevant groups $Q_{\ell,\infty}$ and $\widetilde{Q}_{\ell,\infty}$.

\subsection{Special properties of tree automorphism groups}
\label{ssec:surjpre}
To this end, for integers $\ell\geq 2$ and $n\geq 1$, we define the finite groups
\[ Q_{\ell,n} \subseteq \widetilde{Q}_{\ell,n}\subseteq\Aut(T_{3,n}) \]
to be the quotients of $Q_{\ell,\infty}$ and $\widetilde{Q}_{\ell,\infty}$
formed by restricting automorphisms of $T_{3,\infty}$ to the finite subtree $T_{3,n}$.

Note that for $n\geq \ell+1$, the groups
$Q_{\ell,n}$ and $\widetilde{Q}_{\ell,n}$ depend on the choice of labeling of the tree.
However, for $n=\ell$, the groups $Q_{\ell,\ell}$ and $\widetilde{Q}_{\ell,\ell}$
are defined independent of the labeling, since the only node $y$ of $T_{3,\ell}$ for
which $\sgn_\ell(\cdot,y)$ is defined at all is the root node $y=x_0$.
More precisely, $\widetilde{Q}_{\ell,\ell}=\Aut(T_{3,\ell})$, and
$Q_{\ell,\ell}$ is the index-2 subgroup consisting of $\sigma\in\Aut(T_{3,\ell})$
for which $\sgn_{\ell-1}(\sigma,x_0) = \sgn_{\ell}(\sigma,x_0)$.
Similarly, for $n\leq \ell+1$, we have
$Q_{\ell,n} = \widetilde{Q}_{\ell,n} =\Aut(T_{3,n})$.

We will need to define certain subgroups of $Q_{\ell,\ell}$ that \emph{do} depend on the labeling.
In particular, if we fix a labeling of $T_{3,\ell}$, and if we denote by $x_{00},x_{01},x_{02}$
the three nodes of the tree at level $1$, then the set
\begin{equation}
\label{eq:Hdef}
H:= \big\{ \sigma\in Q_{\ell,\ell} \, \big| \,
\sgn_{\ell-1}(\sigma,x_{00}) = \sgn_{\ell-1}(\sigma,x_{01}) = \sgn_{\ell-1}(\sigma,x_{02}) \big\}
\end{equation}
is clearly a subgroup of $Q_{\ell,\ell}$.
Because of the two sign conditions, the index of $H$ in $Q_{\ell,\ell}$ is $[Q_{\ell,\ell}:H]=4$.

Less obviously, $H$ is a non-normal subgroup of $Q_{\ell,\ell}$; in fact, there are four conjugate
subgroups (including $H$ itself) which we will arbitrarily denote $H_{\ell,1},H_{\ell,2},H_{\ell,3},H_{\ell,4}$.
To see this, first observe that changing the labels of nodes at level $\ell-1$ or below has no effect
on $\sgn_{\ell-1}(\sigma,x_{0i})$ for any $i$, and hence such a change leaves $H$ the same.
The same is true if we change the labeling by making an even permutation of labels of the nodes
at level $n$ above any one (or more) of $x_{00},x_{01},x_{02}$.
On the other hand, if we make an odd permutation of the labels at level $n$ above $x_{0i}$,
then for any $\sigma\in Q_{\ell,\ell}$ for which $\sigma(x_{0i})\neq x_{0i}$,
the effect is to multiply both $\sgn_{\ell-1}(\sigma,x_{0i})$ and $\sgn_{\ell-1}(\sigma,x_{0j})$ by $-1$,
where $\sigma^{-1}(x_{0i})=x_{0j}$; since the sign of $\sigma$ above the third node among $x_{00},x_{01},x_{02}$
remains unchanged, this means that the subgroup described by condition~\eqref{eq:Hdef} changes.
A similar effect occurs if we make odd-parity label changes above two of $x_{00},x_{01},x_{02}$.
However, if we make odd-parity label changes above all three, then the subgroup $H$ is preserved.
Thus, with two possible parity choices above each node $x_{00},x_{01},x_{02}$, modulo changing all three,
there are indeed $2^3/2=4$ possible (conjugate) subgroups $H$ as in equation~\eqref{eq:Hdef}.

For any two nodes $a,b$ at the same level $m$ of a tree $T_{d,n}$ or $T_{d,\infty}$, we may define
their \emph{tree distance} $\dist_T(a,b)$ to be the smallest integer $j\geq 0$ for which $a$ and $b$ both
lie above a common node $c$ at level $m-j$.
For example, then, two different nodes sharing a common parent are distance 1 apart,
whereas those sharing a common grandparent but not a parent are distance 2 apart. Equivalently,
$j=\dist_T(a,b)$ is half of the graph distance $2j$ between $a$ and $b$, since getting from $a$ to $b$
on the graph requires going down $j$ levels from $a$ to $c$, and then up $j$ levels up to $b$.

\begin{defin}
\label{def:arb2trans}
Let $d\geq 2$ and $n\geq 1$ be integers, and let $G\subseteq\Aut(T_{d,n})$ be a subgroup.
We say that $G$ is \emph{arboreally doubly transitive} at level $n$ if
for any nodes $a_1,a_2,b_1,b_2$ at level $n$ of the tree $T_{d,n}$ for which
$\dist_T(a_1,b_1)=\dist_T(a_2,b_2)$, 
there is some $\sigma\in G$ such that $\sigma(a_1)=a_2$ and $\sigma(b_1)=b_2$.
\end{defin}

It is an easy induction to show that $\Aut(T_{d,n})$ is arboreally doubly transitive
at every level,
as are the subgroups $Q_{\ell,n}\subseteq\widetilde{Q}_{\ell,n}\subseteq\Aut(T_{3,n})$.

\subsection{Group-theoretic results on the tree}
\label{ssec:surjlem}

\begin{lemma}
\label{lem:evengen}
Let $n\geq m \geq 2$ be integers.
Let $G\subseteq \Aut(T_{3,n})$ be a subgroup with the following properties.
\begin{itemize}
\item The quotient of $G$ formed by restricting to the subtree $T_{3,n-1}$ rooted at $x_0$
is arboreally doubly transitive at level $n-1$.
\item  $G$ acts transitively on the nodes of $T_{3,n}$ at level $n$.
\item  There exists $\theta\in G$ that fixes almost every node of the tree $T_{3,n}$, with the following exceptions.
There are nodes $z_0$ and $z_1$ at level $n-1$ with $\dist_T(z_0,z_1)=m-1$,
and $\theta$ acts as a transposition of two pairs of nodes at level $n$ of the tree,
with one pair above $z_0$ and the other above $z_1$.
\end{itemize}
Then for any $\sigma\in\Aut(T_{3,n})$ that fixes every node of the tree at level $n-1$ and below,
and for which $\sgn_m(\sigma,x)=+1$ for every node $x$ at level $n-m$,
we have $\sigma\in G$.
\end{lemma}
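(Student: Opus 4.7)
The plan is to let $N$ denote the set of all $\sigma\in\Aut(T_{3,n})$ satisfying the conclusion, namely those fixing every node through level $n-1$ and having $\sgn_m(\sigma,x)=+1$ at every level-$(n-m)$ node $x$; and to let $N^* := \langle g\theta g^{-1} : g\in G\rangle$ be the normal closure of $\theta$ in $G$. After identifying the pointwise level-$(n-1)$ stabilizer $N_0\subseteq\Aut(T_{3,n})$ with the product $S_3^{3^{n-1}}$ (one $S_3$-factor per parent node at level $n-1$), the condition defining $N$ reads $\prod_{p\text{ above }x}\sgn(\sigma_p)=+1$ for each node $x$ at level $n-m$, where $\sigma_p\in S_3$ is the induced permutation on the three children of $p$. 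Each conjugate of $\theta$ satisfies this sign condition, so $N^*\subseteq N$, and the goal is to establish $N\subseteq N^*$.

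I would argue in two parts. For the ``sign pattern'' part, work modulo the normal subgroup $A := A_3^{3^{n-1}}\subseteq N_0$ of tuples with all entries in $A_3$; the quotient $N/A$ is the subgroup of $(\ZZ/2)^{3^{n-1}}$ cut out by the $3^{n-m}$ parity relations. By arboreal double transitivity at level $n-1$, every pair $(z,w)$ of level-$(n-1)$ nodes at tree distance $m-1$ arises as $(\tau(z_0),\tau(z_1))$ for some $\tau\in G$, so that $\tau\theta\tau^{-1}\in N^*$ contributes the indicator vector of $\{z,w\}$ to $N^*/A$. A short combinatorial check verifies that the indicator vectors of such distance-$m-1$ pairs span the even-parity subspace above every level-$(n-m)$ node $x$: given any two parents $p,q$ above $x$, one picks a third parent $r$ above a different child of $x$ from both $p$ and $q$, and writes $\{p,q\}=\{p,r\}+\{q,r\}$, each summand being a pair at tree distance exactly $m-1$. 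Consequently $N^*\cdot A = N$.

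For the remaining part, one must show $N^*\supseteq A$. By normality of $A$ in $N_0$ and transitivity of $G$ on level-$(n-1)$ nodes, it suffices to produce inside $N^*$ a single element acting as a $3$-cycle on the children of one parent $p$ and trivially elsewhere above level $n-1$. Such an element arises as the ratio of two conjugates of $\theta$ that both have double-transposition support on the same pair $(z,w)$ at distance $m-1$, agree in the transposition they induce on the children of $w$, but differ in the transposition they induce on the children of $z$; the product of two distinct transpositions in $S_3$ is a $3$-cycle, while the action above $w$ cancels. Combining with the first part gives $N^*\supseteq A$ and $N^*/A = N/A$, hence $N^* = N\subseteq G$, as desired.

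The main obstacle is this last construction: arranging for two conjugates of $\theta$ that agree above $w$ but disagree above $z$. This amounts to showing that the image of the pointwise level-$(n-1)$ stabilizer of $(z,w)$ in $G$, under the natural projection to the action on children of $z$ and children of $w$ (landing in $S_3\times S_3$), is not confined to the cyclic group generated by the single pair $(t_0,t_1)$ that $\theta$ itself provides. This requires a careful combined use of the hypotheses: arboreal double transitivity at level $n-1$ yields an abundance of elements of $G$ preserving the pair $\{z,w\}$ in level-$(n-1)$ terms, while transitivity at level $n$ delivers enough variation on the children of $z$ (and independently on the children of $w$) to enlarge this image beyond $\langle(t_0,t_1)\rangle$, producing the required mismatch.
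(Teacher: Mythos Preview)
Your framework is sound, and Part~1 (that conjugates of $\theta$ span $N/A$ over $\FF_2$) is correct; it parallels the content of the paper's Steps~1 and~3. The gap lies in Part~2, and it is a genuine one. The reduction you state---that it suffices for the image in $S_3\times S_3$ of the pointwise stabilizer of $(z,w)$ to exceed $\langle(t_0,t_1)\rangle$---is not what is actually needed: you require an element $h$ of that stabilizer whose projection centralizes $t_1$ but \emph{not} $t_0$, and a larger image such as a diagonal copy of $S_3$ passing through $(t_0,t_1)$ contains no such element (every element of such a diagonal centralizes both or neither). The hypotheses give transitivity at level $n$ and arboreal double transitivity at level $n-1$, but neither lets you prescribe the action on the children of two fixed level-$(n-1)$ nodes independently, so your sketch does not close.

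The paper avoids this by never requiring the two double transpositions to share the same support pair. Its Step~1 first builds double transpositions $\rho_{ab}$ for \emph{every} pair at distance at most $m-1$, not only $m-1$: given $a,b$ above a common level-$(n-m)$ node, pick $c$ at distance exactly $m-1$ from each, conjugate $\theta$ to $\theta_a$ on $\{a,c\}$ and $\theta_b$ on $\{b,c\}$, and set $\rho_{ab}:=(\theta_a\theta_b)^3$, the cube killing whatever $3$-cycle appears above $c$. Then in Step~2, taking the two siblings $b,c$ of $a$ (distance~$1$), the elements $\rho_{ab}$ and $\rho_{ac}$ overlap only above $a$. If their transpositions there coincide, level-$n$ transitivity supplies $\lambda\in G$ with $\lambda(a)=a$ acting nontrivially on the children of $a$, so that $\rho_{ab}\cdot\lambda\rho_{ac}\lambda^{-1}$ carries a $3$-cycle above $a$ and only transpositions elsewhere; squaring then yields the pure $3$-cycle $\mu_a$. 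The crucial point is that only the single condition $\lambda(a)=a$ is needed---no simultaneous control above a second node---and that is exactly what level-$n$ transitivity provides.
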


\begin{proof}
\textbf{Step 1}.
For any two nodes $a$ and $b$ at level $n-1$ of $T_{3,n}$ with $1\leq \dist_T(a,b)\leq m-1$,
we claim that there is some $\rho_{ab}\in G$
that fixes almost every node of the tree,
except that it acts as a transposition of two nodes at level $n$ above $a$,
and also as a transposition of two nodes at level $n$ above $b$.
To see this, let $w$ be the node at level $n-m$ that $a,b$ both lie above,
let $w_0,w_1,w_2$ be the three nodes at level $n-m+1$ immediately above $w$,
and let $c$ be a node at level $n-1$ not lying above the same one of $w_0,w_1,w_2$,
as either $a$ or $b$ does.
Choose $\tilde{\tau}_a,\tilde{\tau}_b$ in the restriction of $G$ to $T_{3,n-1}$ such that
\[ \tilde{\tau}_a(z_0)=a, \quad 
\tilde{\tau}_a(z_1)=c, \quad
\tilde{\tau}_b(z_0)=b, \quad
\tilde{\tau}_b(z_1)=c.\]
Such $\tilde{\tau}_a$ exists by the arboreal double transitivity hypothesis,
and because $\dist_T(a,c)=m-1=\dist_T(z_0,z_1)$; similarly for $\tilde{\tau}_b$.
Thus, there exist
$\tau_a,\tau_b\in G$ that restrict to $\tilde{\tau}_a,\tilde{\tau}_b$, respectively.
Then using the element $\theta\in G$ of the third bullet point,
the composition $\theta_a:=\tau_a\theta\tau_a^{-1}$ acts as transpositions above $a$ and $c$,
and $\theta_b:=\tau_b\theta\tau_b^{-1}$ acts as transpositions above $b$ and $c$,
fixing every other node. Hence, $\theta_a \theta_b$ acts as a transposition above both $a$ and $b$,
and as either the identity or a $3$-cycle above $c$. Therefore, $\rho_{ab}:=(\theta_a \theta_b)^3\in G$
has the desired property, proving our first claim.

\medskip
\textbf{Step 2}.
Next, we claim that for any node $a$ at level $n-1$, there is some $\mu_a\in G$
that acts as a 3-cycle above $a$ and fixes every other node of $T_{3,n}$.
To prove this claim,
let $b,c$ be the two other nodes at level $n-1$ with $\dist_T(a,b)=\dist_T(a,c)=1$,
and consider the elements $\rho_{ab},\rho_{ac}\in G$ from Step~1.

If the two nodes $a_0,a_1$ above $a$ transposed by $\rho_{ab}$ coincide with
the two transposed by $\rho_{ac}$, then let $\lambda\in G$ map the third node $a_2$ above $a$ to $a_0$;
such $\lambda$ exists by the transitivity hypothesis. Otherwise, let $\lambda$ be the identity element of $G$.

Thus, $\rho_{ab}\in G$ and $\lambda\rho_{ac}\lambda^{-1}\in G$ act as different transpositions above $a$,
so that their product $\rho_{ab}\lambda\rho_{ac}\lambda^{-1}$ acts as a 3-cycle above $a$, and as
a transposition above each of $b$ and $c$. Hence, $\mu_a:=(\rho_{ab}\lambda\rho_{ac}\lambda^{-1})^2\in G$
is a 3-cycle above $a$, while fixing every other node of the tree. This proves our second claim.

\medskip
\textbf{Step 3}.
Consider any $\sigma\in\Aut(T_{3,n})$ 
that fixes every node of the tree at level $n-1$ and below
and for which $\sgn_m(\sigma,x)=+1$ for every node $x$ at level $n-m$.
Then $\sigma$ may be written as a product of disjoint $3$-cycles and pairs
of disjoint $2$-cycles, where any such pairs lie above nodes $a,b$ at level $n-1$ with $\dist_T(a,b)\leq m-1$.

Let $\sigma'$ be the product of $\sigma$ with the corresponding $\rho_{ab}$ from Step~1 for any such
pair $a,b$. Then $\sigma'$ is a product of disjoint 3-cycles at level $n$.
(Note that the 2-cycle above $a$ or $b$
in any such $\rho_{ab}$ may not exactly match the corresponding $2$-cycle in $\sigma$, but in that case
their product is a 3-cycle.) Each such $3$-cycle is of the form $\mu_a$ or $\mu_a^2$ for some $\mu_a$
from Step~2. Thus, $\sigma'$ and hence $\sigma$ is a product of $\rho_{ab}$'s and $\mu_a$'s and therefore
belongs to $G$.
\end{proof}

\begin{thm}
\label{thm:gen}
Let $n\geq \ell\geq 2$ be integers. Fix a labeling on the tree $T_{3,n}$,
and let $G\subseteq Q_{\ell,n}$ be a subgroup with the following properties.
\begin{itemize}
\item
The quotient of $G$ formed by restricting to
the subtree $T_{3,n-1}$ rooted at $x_0$ is all of $Q_{\ell,n-1}$.
\item
Let $y$ be one of the nodes at level $1$, and consider the subgroup $G^{y}$
of elements of $G$ that fix $y$. Then the quotient of $G^y$ formed by restricting to the subtree $T_{3,n-1}$
rooted at $y$ is all of $Q_{\ell,n-1}$.
\item
If $n=\ell$, then the set of elements of $G$ that fix the bottom $\ell-1$ rows of $T_{3,\ell}$ is
not contained in any of the subgroups $H_{\ell,1},H_{\ell,2},H_{\ell,3},H_{\ell,4}$.
\end{itemize}
Then $G=Q_{\ell,n}$.
\end{thm}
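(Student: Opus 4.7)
The plan is to show $G \supseteq K := \ker(Q_{\ell,n}\to Q_{\ell,n-1})$. Combined with hypothesis (A)---surjectivity of $G\to Q_{\ell,n-1}$---this immediately forces $G=Q_{\ell,n}$. The main tool is Lemma~\ref{lem:evengen} applied with $m=\ell$: for $\sigma\in\Aut(T_{3,n})$ trivial on $T_{3,n-1}$, membership in $Q_{\ell,n}$ reduces to $\sgn_\ell(\sigma,x)=+1$ at every level-$(n-\ell)$ node $x$, so the Lemma's conclusion is precisely ``$G$ contains $K$.'' Thus it suffices to verify the Lemma's three hypotheses for our $G$.

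Two of them come quickly. Arboreal double transitivity at level $n-1$ for $G|_{T_{3,n-1}}=Q_{\ell,n-1}$ (by (A)) follows from the general observation at the end of Section~\ref{ssec:surjpre} that the groups $Q_{\ell,m}$ are arboreally doubly transitive. Transitivity of $G$ on level-$n$ nodes: given $v,v'$ at level $n$ with level-$1$ ancestors $y,y'$, hypothesis (A) (together with transitivity of $Q_{\ell,1}$) gives $g\in G$ with $g(y)=y'$, and then (B) applied inside the subtree above $y'$ yields $h\in G^{y'}$ with $h(g(v))=v'$.

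The substantive work is producing $\theta\in G$ that acts as transpositions of two sibling pairs at level $n$ above nodes $z_0,z_1$ at level $n-1$ with $\dist_T(z_0,z_1)=\ell-1$, and is the identity elsewhere. The construction splits into two cases. For $n\geq\ell+1$, pick a level-$1$ node $y$ and choose $z_0,z_1$ at level $n-1$ inside the subtree $T_y$ above $y$ with $\dist_T(z_0,z_1)=\ell-1$; this is possible since $T_y$ has height $n-1\geq\ell$. The target double transposition lies in $Q_{\ell,n-1}$ of the subtree, so by (B) there is $\tau\in G^y$ with $\tau|_{T_y}$ equal to it. To kill $\tau$'s action off $T_y$, first lift $(\tau|_{T_{3,n-1}})^{-1}$ via (A) to $\widetilde\rho\in G$, producing $\widetilde\rho\tau\in G\cap K$ with the correct action above $z_0,z_1$; any residual level-$n$ action outside $T_y$ is then cancelled by multiplying by further elements of $G\cap K$ obtained from the analogous construction at other level-$1$ ancestors (using $G$'s level-$1$ transitivity to move between subtrees), organized by the $Q_{\ell,n-1}$-conjugation action on $G\cap K$.

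For $n=\ell$ the condition $\dist_T(z_0,z_1)=\ell-1$ forces $z_0,z_1$ to lie in different level-$1$ subtrees, so (B) alone cannot produce $\theta$, and hypothesis (C) becomes essential. Identify $K$ with the subgroup of $\prod_w S_3$ (indexed by the $3^{\ell-1}$ level-$(\ell-1)$ nodes $w$) cut out by $\prod_w\sgn(\pi_w)=+1$. The pattern quotient $K/[K,K]$ is the $\Aut(T_{3,\ell-1})$-module $V=\{s\in\mathbb{F}_2^{3^{\ell-1}}:\sum_w s_w=0\}$, and the four conjugate subgroups $H_{\ell,i}$ intersected with $K$ are exactly the four pattern-preimages. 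Hypothesis (C) says that the image of $G\cap K$ in $V$ has order at least $2$. Because $G\cap K$ is normal in $G$ with $G/G\cap K\cong\Aut(T_{3,\ell-1})$, this image is $\Aut(T_{3,\ell-1})$-invariant; a classification of invariant subspaces of $V$ (which for small $\ell$ has only the trivial and full invariant subspaces, and in general a short explicit list) forces the image to be all of $V$. For the $[K,K]$-part, (B) applied at each level-$1$ node $y$ supplies $3$-cycle elements in $G^y$; after adjusting via (A), these become elements of $G\cap K$ whose $\Aut(T_{3,\ell-1})$-orbit, using transitive permutation of coordinates and the classification of invariant subspaces of $A_3^{3^{\ell-1}}\cong\mathbb{F}_3^{3^{\ell-1}}$, generates all of $[K,K]$. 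Together these give $G\cap K=K$, and in particular $\theta\in G$.

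The genuinely hard step is the $n=\ell$ case, where two sub-points require care: the classification of $\Aut(T_{3,\ell-1})$-invariant subspaces of $V$ and of $\mathbb{F}_3^{3^{\ell-1}}$, and showing that the $3$-cycle elements supplied by (B) have $\Aut$-orbit generating all of $[K,K]$. The case $n\geq\ell+1$ is more routine in spirit, but the bookkeeping to cancel the ``outside'' residual action of $\widetilde\rho\tau$ via iterated applications of (A) and (B) is delicate and will likely need a careful inductive or generator-by-generator argument inside $G\cap K$.
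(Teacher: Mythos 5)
Your overall skeleton is the right one and matches the paper's: reduce to showing $G$ contains the kernel $K$ of $Q_{\ell,n}\to Q_{\ell,n-1}$, verify the hypotheses of Lemma~\ref{lem:evengen}, and then combine with surjectivity onto $Q_{\ell,n-1}$. The transitivity step and the observation that the groups $Q_{\ell,m}$ are arboreally doubly transitive are also exactly what the paper uses. The gaps are in the two places you yourself flag as delicate, and they are genuine.

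First, your construction of $\theta$ for $n\geq\ell+1$ is circular. You build $\widetilde\rho\tau\in G\cap K$ and then want to cancel its uncontrolled level-$n$ action ``by multiplying by further elements of $G\cap K$''---but at this stage you have not yet established anything about which elements $G\cap K$ contains; producing such cancelling elements is exactly the problem you are trying to solve. There is a second issue in the same step: when you lift $(\tau|_{T_{3,n-1}})^{-1}$ to $\widetilde\rho\in G$ via hypothesis (A), you have no control over $\widetilde\rho$'s action at level $n$ \emph{inside} $T_y$, so the claim that $\widetilde\rho\tau$ ``has the correct action above $z_0,z_1$'' does not follow. The paper sidesteps both problems with a commutator device: it takes $\tau\in G$ (from (A)) acting as a $3$-cycle at level $n-m+1$, $\sigma\in G^y$ (from (B)) acting as a double transposition at level $n$ inside $T_y$, and forms $\lambda=\sigma\tau\sigma^{-1}\tau^{-1}$, then $\mu=\lambda\tau\lambda^{-1}\tau^{-1}$, then $\theta=\mu^3$. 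The commutator structure automatically forces $\lambda$ and $\mu$ to fix levels $\leq n-1$ (by a two-case argument, inside vs.\ outside $T_y$), and an explicit $\sgn_1$ computation pins down the level-$n$ action of $\mu$ above $y$ while the cube kills the stray $3$-cycles elsewhere. This is the missing idea.

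Second, the $n=\ell$ branch contains a concrete error about the subgroups $H_{\ell,i}$. For $\sigma$ in the kernel $K$ (trivial on $T_{3,\ell-1}$), each $\sgn_{\ell-1}(\sigma,x_{0i})$ is invariant under the relabelings that permute $H$ among its conjugates, because those relabelings only affect $\sgn_{\ell-1}(\sigma,\cdot)$ for $\sigma$ that move the $x_{0i}$'s. Consequently $K\cap H_{\ell,1}=K\cap H_{\ell,2}=K\cap H_{\ell,3}=K\cap H_{\ell,4}$, so the ``four pattern-preimages'' you describe are one and the same subspace, and hypothesis (C) gives you exactly one constraint, not four. Your claimed classification of $\Aut(T_{3,\ell-1})$-invariant subspaces of $V$ is also too optimistic: for $\ell\geq 3$ the subspace of vectors constant on each of the three level-$1$ subtrees and summing to zero is a proper nonzero invariant subspace, so nonzero image does not force full image. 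The paper instead extracts from (C) a single explicit $\rho\in G\cap K$ with exactly two of $\sgn_{\ell-1}(\rho,x_{00}),\sgn_{\ell-1}(\rho,x_{01}),\sgn_{\ell-1}(\rho,x_{02})$ equal to $-1$ (using the $Q_{\ell,\ell}$ parity constraint to rule out an odd number of $-1$'s), multiplies by a Step-4 element $\sigma\in G$ with all three signs $+1$ to isolate a genuine double transposition $\theta'=\rho\sigma$, and then reapplies Lemma~\ref{lem:evengen} with $m=\ell$. That argument is both simpler and correct; you would need to replace your module-theoretic discussion with something along these lines.
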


\begin{proof}
\textbf{Step 1}.
We claim that $G$ acts transitively at level $n$ of the tree $T_{3,n}$.
To see this, given nodes $a,b$ at level $n$, by the first bulleted hypothesis there exist $\tau_a,\tau_b\in G$
such that $\tau_a(a)$ and $\tau_b(b)$ both lie above the node $y$.
Then by the second bulleted hypothesis, there is some $\sigma\in G$ such that $\sigma(\tau_a(a))=\tau_b(b)$.
Hence, the composition $\tau_b^{-1}\sigma\tau_a$ maps $a$ to $b$, proving our claim.

If $n=2$, in which case $\ell=2$ also,
then jump ahead to Step~5 below.

\medskip

\textbf{Step 2}.
For Steps~2--4, we assume that $n\geq 3$.
Define
\[ m:= \begin{cases}
\ell & \text{ if } n\geq \ell+1, \\
\ell-1 & \text{ if } n=\ell.
\end{cases} \]
Let $c$ be a node at level $n-m\geq 1$ of the tree, with $c$ lying on or above the node $y$
specified in the hypotheses.
Let $c_0,c_1,c_2$ be the three nodes at level $n-m+1\leq n-1$ immediately above $c$.
By the first bulleted hypothesis, there is some $\tau\in G$ that acts as a 3-cycle on
$\{c_{0},c_{1},c_{2}\}$ but otherwise acts as the identity on $T_{3,n-1}$.
That is, $\tau$ fixes all nodes of $T_{3,n-1}$
that do not lie at or above $c$; and
for any word $w$ of length at most $m-1$ in the symbols $\{0,1,2\}$, we have
$\tau(c_{0}w)=c_{1} w$, $\tau(c_{1} w)=c_{2} w$, and $\tau(c_{2} w)=c_{0} w$.
In particular, if we fix a node $z_0$ at level $n-1$ lying above $c_0$, then
$\tau$ also acts as a 3-cycle on $\{z_0,z_1,z_2\}$,
where $z_1:=\tau(z_0)$ and $z_2:=\tau(z_0)$, which are nodes at level $n-1$
lying above $c_{1}$ and $c_{2}$, respectively.
There is indeed such an element of $Q_{\ell,n-1}$ because $\tau$ acts on every node at levels up to $n-1$
either as part of a 3-cycle or by fixing it, and hence $\tau$ is even at every level of every subtree of $T_{3,n-1}$.
See Figure~\ref{fig:thmgen}.

By the second bulleted hypothesis, there is some $\sigma\in G$ that acts as a transposition of two
of the nodes above $z_0$ and two of the nodes above $z_2$,
but which fixes every other node lying on or above $y$.
Since $\dist_T(z_0,z_2)=m-1<\ell$, there is indeed an element of $Q_{\ell,n-1}$ acting in this
way on the subtree of $T_{3,n}$ rooted at $y$, so our hypotheses
do indeed show that such $\sigma\in G$ exists.
However, we do not know how $\sigma$ behaves outside of the subtree rooted at $y$,
just we do not know how $\tau$ acts on any of the nodes at level $n$.

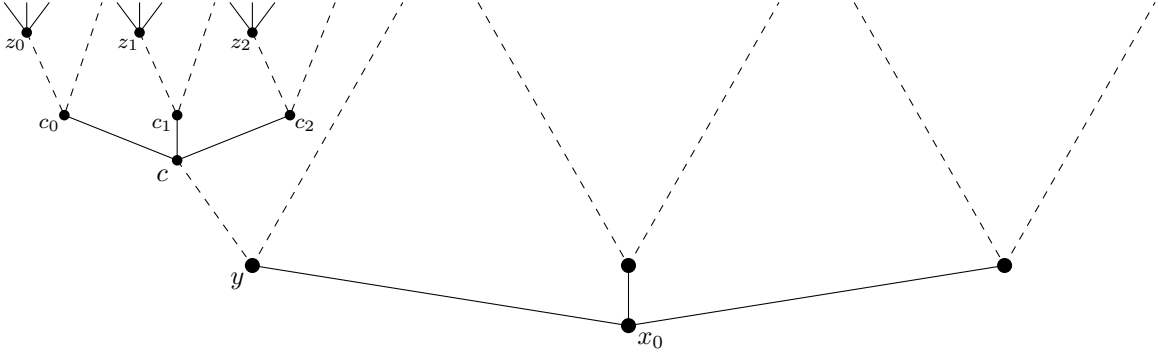
\begin{figure}
\begin{tikzpicture}
\path[draw] (0,0) -- (0,.8);
\path[draw] (-5,.8) -- (0,0) -- (5,.8);
\path[draw] (-7.5,2.8) -- (-6,2.2) -- (-4.5,2.8);
\path[draw] (-6,2.8) -- (-6,2.2);
\path[draw,dashed] (-6,2.2) -- (-5,.8) -- (-3,4.3);
\path[draw,dashed] (-2,4.3) -- (0,.8) -- (2,4.3);
\path[draw,dashed] (3,4.3) -- (5,.8) -- (7,4.3);
\path[draw,dashed] (-8,3.9) -- (-7.5,2.8) -- (-7.0,4.3);
\path[draw,dashed] (-6.5,3.9) -- (-6,2.8) -- (-5.5,4.3);
\path[draw,dashed] (-5,3.9) -- (-4.5,2.8) -- (-3.9,4.3);
\path[draw] (-8.3,4.3) -- (-8,3.9) -- (-7.7,4.3);
\path[draw] (-8,4.3) -- (-8,3.9);
\path[draw] (-6.8,4.3) -- (-6.5,3.9) -- (-6.2,4.3);
\path[draw] (-6.5,4.3) -- (-6.5,3.9);
\path[draw] (-5.3,4.3) -- (-5,3.9) -- (-4.7,4.3);
\path[draw] (-5,4.3) -- (-5,3.9);
\path[fill] (0,0) circle (0.1);
\path[fill] (0,.8) circle (0.1);
\path[fill] (-5,.8) circle (0.1);
\path[fill] (5,.8) circle (0.1);
\path[fill] (-6,2.2) circle (0.07);
\path[fill] (-7.5,2.8) circle (0.07);
\path[fill] (-6,2.8) circle (0.07);
\path[fill] (-4.5,2.8) circle (0.07);
\path[fill] (-8,3.9) circle (0.07);
\path[fill] (-6.5,3.9) circle (0.07);
\path[fill] (-5,3.9) circle (0.07);
\node (x0) at (0.3,-0.2) {\footnotesize $x_0$};
\node (y) at (-5.2,0.6) {\footnotesize $y$};
\node (x) at (-6.2,2) {\footnotesize $c$};
\node (y1) at (-7.7,2.67) {\tiny $c_0$};
\node (y2) at (-6.2,2.67) {\tiny $c_1$};
\node (y3) at (-4.3,2.67) {\tiny $c_2$};
\node (z1) at (-8.15,3.74) {\tiny $z_0$};
\node (z2) at (-6.65,3.74) {\tiny $z_1$};
\node (z3) at (-5.15,3.74) {\tiny $z_2$};
\end{tikzpicture}
\caption{The tree $T_{3,n}$ of Theorem~\ref{thm:gen}.
The node $c$ is at level $n-m$, and each node $z_i$ is at level $n-1$, above the corresponding node $c_i$
at level $n-m+1$. The automorphism $\tau$ is a 3-cycle on $\{c_0,c_1,c_2\}$ and also on $\{z_0,z_1,z_2\}$,
with $\sigma$ transposing two of the nodes above $z_0$, and also transposing two of the nodes above $z_2$.}
\label{fig:thmgen}
\end{figure}

\medskip

\textbf{Step 3}.
Define $\lambda\in G$ to be the commutator $\lambda:=\sigma\tau\sigma^{-1}\tau^{-1}$,
where $\sigma,\tau\in G$ are the specific elements from Step~2.
We claim that
\begin{itemize}
\item $\lambda(x')=x'$ for every node $x'$ at levels $0$ through $n-1$,
\item $\sgn_1(\lambda,z_1)=-1$ and $\sgn_1(\lambda,z_2)=-1$, and
\item $\sgn_1(\lambda,z)=+1$ for every other node $z$ at level $n-1$ lying above $y$.
\end{itemize}
(However, we make no claim about $\sgn_1(\lambda,z)$ for nodes $z$ at level $n-1$ not lying above $y$.)

To prove the first point of the claim, given a node $x'$ at level $n-1$ or below, there are two cases.
On the one hand, if $x'$ lies above $y$, then so does $\tau^{-1}(x')$, and hence $\sigma$ fixes
both $x'$ and $\tau^{-1}(x')$. Therefore $\lambda(x')=\sigma\tau\tau^{-1}(x')=x'$.
On the other hand, if $x'$ does not lie above $y$, then neither does $\sigma^{-1}(x')$, and hence $\tau$
fixes both $x'$ and $\sigma^{-1}(x')$. It follows that $\lambda(x')=\sigma\tau\sigma^{-1}(x')=x'$,
proving the first bullet point of our claim.

For the second and third bullet points of our claim,
given an arbitrary node $z$ at level $n-1$ that lies above $y$,
we have $\sigma(z)=z$.
In addition, $\sgn_1(\sigma,z)=+1$ unless $z=z_0$ or $z=z_2$, in which case $\sgn_1(\sigma,z)=-1$.
By equation~\eqref{eq:Parident}, we have
\begin{align*}
\sgn_1(\lambda,z) &= 
\sgn_1(\sigma,z) \sgn_1\big(\tau,\tau^{-1}(z)\big)
\sgn_1\big(\sigma^{-1},\tau^{-1}(z)\big) \sgn_1(\tau^{-1},z)
\\
&=\sgn_1(\sigma,z) \sgn_1(\sigma^{-1},\tau^{-1}(z)) \big( \sgn_1(\tau^{-1},z) \big)^2
=\sgn_1(\sigma,z) \sgn_1(\sigma,\tau^{-1}(z)).
\end{align*}
Thus, recalling that $\tau^{-1}$ maps $z_0$ to $z_2$ to $z_1$ to $z_0$, we have
\begin{equation}
\label{eq:lambdasign}
\sgn_1(\lambda,z) = \begin{cases}
(-1)(-1) = +1 & \text{ if } z=z_0, \\
(+1)(-1) = -1 & \text{ if } z=z_1, \\
(-1)(+1) = -1 & \text{ if } z=z_2, \\
(+1)(+1) = +1 & \text{ otherwise},
\end{cases}
\end{equation}
completing the proof of the claim.

\medskip

\textbf{Step 4}.
Define $\mu\in G$ to be the commutator $\mu:=\lambda\tau\lambda^{-1}\tau^{-1}$,
where $\tau,\lambda\in G$ are the specific elements from Steps~2 and~3.
Then $\mu$ fixes every node $x'$ at levels $0$ through $n-1$; this is because $\lambda$ does,
and hence $\mu(x') = \tau(\tau^{1}(x'))=x'$.
In addition, by a similar computation as in equation~\eqref{eq:lambdasign},
for any node $z$ at level $n-1$ that lies above $y$, we have
\[ \sgn_1(\mu,z) = \begin{cases}
(+1)(-1) = -1 & \text{ if } z=z_0, \\
(-1)(+1) = -1 & \text{ if } z=z_1, \\
(-1)(-1) = +1 & \text{ if } z=z_2, \\
(+1)(+1) = +1 & \text{ otherwise}.
\end{cases} \]
For the other nodes $z$ at level $n-1$, this time we \emph{can} compute $\sgn_1(\mu,z)$,
since $\lambda(z)=\tau(z)=z$ for such $z$, and hence
\begin{align*}
\sgn_1(\mu,z) &= 
\sgn_1(\lambda,z) \sgn_1(\tau,z) \sgn_1(\lambda^{-1},z) \sgn_1(\tau^{-1}z) 
\\
&= \big( \sgn_1(\lambda,z) \big)^2 \big( \sgn_1(\tau,z) \big)^2 = +1.
\end{align*}
Thus, at level $n-1$ of $T_{3,n}$, $\mu$ acts as a $2$-cycle above $z_0$ and $z_1$,
and as either the identity or a $3$-cycle above every other node $z$ at level $n-1$.
It follows that $\theta:=\mu^3\in G$ is the identity on the whole tree $T_{3,n}$ except for
a $2$-cycle above $z_0$ and another $2$-cycle above $z_1$.

By Lemma~\ref{lem:evengen}, 
$G$ contains every $\sigma\in\Aut(T_{3,n})$ that fixes every node of the tree at level $n-1$ and below,
and for which $\sgn_m(\sigma,x)=+1$ for every node $x$ at level $n-m$.

\medskip

\textbf{Step 5}.
We claim that $G$ also contains the subgroup $W_{\ell,n}\subseteq Q_{\ell,n}$ consisting of
every $\sigma\in\Aut(T_{3,n})$ that fixes every node of the tree at level $n-1$ and below,
and for which $\sgn_\ell(\sigma,x)=+1$ for every node $x$ at level $n-\ell$.
If $n\geq\ell+1$, then the integer $m$ from Step~2 is $m=\ell$,
and hence we have just proven the claim at the end of Step~4.

To prove the claim, then, it remains to consider the case that $n=\ell\geq 2$.
In that case, define $x_{00},x_{01},x_{02}$ to be the three nodes at level~1 of the tree.
The third bulleted hypothesis says that $G\subseteq Q_{\ell,\ell}$ contains an element $\rho$
that is the identity on the subtree $T_{3,n-1}$ rooted at $x_0$ and for which two of
$\sgn_{\ell-1}(\rho,x_{00})$, $\sgn_{\ell-1}(\rho,x_{01})$, and $\sgn_{\ell-1}(\rho,x_{02})$
are $-1$, and the third is $+1$. Without loss, the two negative signs occur over $x_{00}$ and $x_{01}$.

If $n=\ell=2$, then $\theta':=\rho^3\in G$ consists of two transpositions at level $2$,
one above each of $x_{00}$ and $x_{01}$.
Otherwise, i.e., if $n=\ell\geq 3$, then there is some $\sigma\in\Aut(T_{3,n})$
that is also the identity on $T_{3,n-1}$
but with $\sgn_{\ell-1}(\sigma,x_{0i})=+1$ for each $i=0,1,2$,
and so that $\theta':=\rho\sigma\in G$ consists of only two transpositions at level $n=\ell$,
one above each of $x_{00}$ and $x_{01}$.
Since $\sigma\in G$ by the conclusion of Step~4,
we also have $\theta'\in G$. Either way, then, by Lemma~\ref{lem:evengen} with $\ell$ in place of $m$,
our claim follows for all cases, including $n=\ell\geq 2$.

Finally, let $\phi:Q_{\ell,n}\to Q_{\ell,n-1}$ be the surjective homomorphism
given by restricting to the first $n-1$ levels of the tree. Then by definition of $Q_{\ell,n}$,
we have $\ker\phi=W_{\ell,n}$. By the above claim, therefore, we have $\ker\phi\subseteq G$.
In addition, the restriction of $\phi$ to $G$ also has image $Q_{\ell,n-1}$.
Hence, we must have $G=Q_{\ell,n}$.
\end{proof}

\section{Some technical results}
\label{sec:tech}

\subsection{A useful normal form}
The following result, a portion of which appeared as \cite[Proposition~4.2]{BDJKRZ},
shows that we may restrict our attention to cubic polynomials of the form $f(z)=Az^3+Bz+1$.

\begin{prop}
\label{prop:collide}
Let $K$ be a field of characteristic not equal to $3$.
Then $f$ is conjugate over $K$
to a cubic polynomial of the form either $Az^3+Bz+1$ or $Az^3+Bz$, but not both.
Moreover,
\begin{enumerate}
\item
If $f$ is conjugate to $Az^3+Bz+1$, then $A$ and $B$ are unique.
\item
If $\charact K\neq 2,3$ and the two finite critical points of $f$ collide, and if the forward orbit of these critical points
does not include a fixed point, then $f$ is conjugate to a unique polynomial of the form $Az^3+Bz+1$.
\end{enumerate}
\end{prop}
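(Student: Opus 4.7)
The plan is to normalize $f$ by an affine conjugation $\phi(z) = \alpha z + \beta$ in two steps. First I would translate to eliminate the $z^2$ coefficient: writing $f(z) = a_3 z^3 + a_2 z^2 + \cdots$, the substitution $z \mapsto z - a_2/(3a_3)$ (available because $\charact K \neq 3$) puts $f$ in the depressed form $Az^3 + Bz + C$ with $A \in K^\times$, and this translation is the unique one with this property since conjugating a depressed cubic by another nontrivial translation $z \mapsto z+\beta$ would reintroduce the term $3A\beta z^2 \neq 0$.

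Next I would address the constant term by a scaling. Under $z \mapsto \alpha z$ with $\alpha \in K^\times$, the polynomial $Az^3 + Bz + C$ becomes $A\alpha^2 z^3 + Bz + C/\alpha$. Thus if $C \neq 0$, the choice $\alpha = C$ yields the form $A'z^3 + Bz + 1$ with $A' = AC^2$, while if $C = 0$ we already have $Az^3 + Bz$. For the uniqueness claim in part~(1), I would argue that any affine conjugacy between two depressed cubics must have $\beta = 0$ by the same calculation, and then comparing constant terms in the resulting identity $A\alpha^3 z^3 + B\alpha z + 1 = \alpha(A'z^3 + B'z + 1)$ forces $\alpha = 1$, whence $A = A'$ and $B = B'$. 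Applying the same comparison to a hypothetical conjugacy from $Az^3 + Bz + 1$ to $A'z^3 + B'z$ (constant term zero) would yield $1 = 0$, so the two canonical forms are mutually exclusive.

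The main obstacle is part~(2), where I must show that under the collision hypothesis $f$ cannot be conjugate to $g(z) = Az^3 + Bz$. The key observation is that $g$ is an odd function, so $g^n(-w) = -g^n(w)$ for every $n$ and every $w$. The two finite critical points of $g$ are $\pm\gamma_0$ with $\gamma_0^2 = -B/(3A)$, so a collision at the $\ell$-th iterate forces
\[ g^\ell(\gamma_0) \;=\; g^\ell(-\gamma_0) \;=\; -g^\ell(\gamma_0), \]
and since $\charact K \neq 2$ we conclude $g^\ell(\gamma_0) = 0$. But $0$ is a fixed point of $g$, so the forward orbit of the critical points of $g$ contains a fixed point; transporting this back through the conjugacy, the same holds for $f$, contradicting the hypothesis. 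Therefore $f$ must be conjugate to a polynomial of the form $Az^3 + Bz + 1$, and uniqueness of $A, B$ then follows from part~(1).
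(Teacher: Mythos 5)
Your proposal is correct and follows essentially the same approach as the paper: kill the $z^2$ term by the translation $z \mapsto z - a_2/(3a_3)$, then scale by the constant term to reach $Az^3+Bz+1$ or $Az^3+Bz$; argue uniqueness and mutual exclusivity by noting an affine conjugacy between depressed cubics must be a pure scaling $z\mapsto\alpha z$ and comparing constant terms; and for part~(2), use the oddness of $Az^3+Bz$ together with $\charact K\neq 2$ to show the colliding critical value would be the fixed point $0$. The only detail the paper makes explicit that you leave implicit is that a conjugacy between polynomials must fix $\infty$ and hence be affine.
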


\begin{proof}
Write $f(z)=a_3 z^3+a_2 z^2+a_1 z+a_0$. Conjugating by the $K$-rational translation $z\mapsto z-a_2/(3a_3)$,
we may assume without loss that $a_2=0$, so that $f(z)=a_3 z^3+b_1 z+b_0$.
If $b_0=0$, then we already have the form $Az^3+Bz$. Otherwise, the $K$-rational scaling $z\mapsto b_0 z$
yields $b_0^{-1}f(b_0z)$ of the form $Az^3+Bz+1$.

If $f_1(z)=A_1 z^3 + B_1 z + C_1$ is conjugate to $f_2(z)=A_2 z^3 + B_2 z + C_2$ with $C_1,C_2\in\{0,1\}$,
then the conjugation
must fix the totally invariant point at $\infty$, so it must be of the form $z\mapsto \alpha z + \beta$.
Because both $f_1$ and $f_2$ lack a $z^2$ term and $\charact K\neq 3$, we must have $\beta=0$.
It follows that $C_2=\alpha C_1$. Since $C_1,C_2\in\{0,1\}$ and $\alpha\neq 0$, we must have $C_1=C_2$;
moreover, if $C_1=C_2=1$, then $\alpha=1$ and hence $f_1=f_2$.

Having proven the main claim and also Statement~(1), assume the hypotheses of Statement~(2).
After conjugating as above, we may assume $f(z)=Az^3+Bz+C$ with $C\in\{0,1\}$.
Thus, $f'(z)=3Az^2+B$,
so that the two critical points are $\pm\gamma$, where $\gamma^2=-B/(3A)$.

If $C=0$, then we have $f(-z)=-f(z)$, and hence $f^n(-\gamma)=-f^n(\gamma)$ for all $n\geq 1$.
Since $f^\ell(\gamma)=f^\ell(-\gamma)$ and $\charact K \neq 2$,
it follows that this common value is $0$, which is fixed by $f$,
contradicting the hypotheses. Thus, we must have $C=1$; and by Statement~(1),
the form $Az^3+Bz+1$ is unique.
\end{proof}

\begin{remark}
In the case that $f$ is conjugate to a polynomial of the form $Az^3+Bz$, then the value of $A$ is not unique.
Indeed, in that case $f$ is conjugate over $\Kbar$ to $A' z^3 + Bz$ for any $A'\in K^{\times}$,
via $z\mapsto \lambda z$ with $\lambda=\sqrt{A'/A}$;
and it is conjugate over $K$ if and only if $A'/A$ is a square in $K$.
\end{remark}

\begin{remark}
Proposition~\ref{prop:collide} includes the hypothesis that the critical points are not pre-fixed.
This hypothesis does not present a problem towards Theorem~\ref{thm:main2}, however.
After all, if the orbit of the colliding critical points is preperiodic, then the iterated discriminants
$\Delta(f^n-x_0)$ will eventually start repeating (up to square multiples). This situation
would make it impossible to have a sequence of pairwise distinct places $v_1,v_2,\ldots$
as in the hypotheses of Theorem~\ref{thm:main2}.
\end{remark}


\subsection{Some explicit computations}
\label{ssec:explicit}
For $f(z)=Az^3+Bz+1$,
we have $f'(z)=3Az^2+B$, and hence the critical points are $\pm\gamma\in\Kbar$,
where $\gamma^2=-B/(3A)$. Given the importance of the iterates of these critical points
in the discriminant formula~\eqref{eq:cubiciter}, we are motivated
to define sequences $F_n=F_n(A,B)$
and $G_n=G_n(A,B)$ by
\begin{equation}
\label{eq:FGdef}
f^n(\gamma)=F_n(A,B) \gamma + G_n(A,B) .
\end{equation}
That is, $F_0=1$ and $G_0=0$, and by substituting $f^{n-1}(\gamma)=F_{n-1}\gamma + G_{n-1}$
into $f(z)$ and recalling $\gamma^2 = -B/(3A)$, we have
\begin{equation}
\label{eq:Fiter}
F_n = \big( 3AG_{n-1}^2 +B - \frac{B}{3} F_{n-1}^2 \big) F_{n-1}
=  f'(G_{n-1}) F_{n-1} - \frac{B}{3} F_{n-1}^3
\end{equation}
and
\begin{equation}
\label{eq:Giter}
G_n = A G_{n-1}^3 + BG_{n-1} + 1 - B F_{n-1}^2 G_{n-1}
= f(G_{n-1}) - B F_{n-1}^2 G_{n-1} .
\end{equation}
In particular, note that $f^n(\gamma)=f^n(-\gamma)$ if and only if $F_n=0$.
That is, the two critical points $\pm\gamma$ of $f$ collide at the $\ell$-th iterate if and only if
$F_{\ell-1}\neq 0$ and $F_{\ell}=0$; or equivalently, if and only if
\begin{equation}
\label{eq:ellcond}
F_{\ell-1}\neq 0
\quad\text{and}\quad 9AG_{\ell-1}^2 +3B= B F_{\ell-1}^2 .
\end{equation}
For example, we have $F_1=2B/3$ and $G_1=1$, so
the two critical points collide at iterate $\ell=2$ if and only if $B\neq 0$ and $81A + 27B - 4B^3=0$.

\begin{remark}
\label{rem:BC}
When $f(z)=Az^3+Bz+1$, with $f'(z)=3Az^2+B$
and critical points $\gamma_1=\gamma$ and $\gamma_2=-\gamma$, we have
\[ f'\bigg(\frac{1}{2}(\gamma_1+\gamma_2)\bigg) = f'(0) = B, \]
which coincides with our definition of $B$ in equation~\eqref{eq:BCdef}.
In that same equation, for each $n\geq 1$, we have
\[ C_n = \big( (F_n \gamma_1 + G_n) - (F_n \gamma_2 + G_n) \big)^2 = 4F_n^2 \gamma^2
= -\frac{4B}{3A} F_n^2 .\]
We also note the following small redundancy in the hypotheses of Theorem~\ref{thm:main2}.
As noted above, we have $F_1=2B/3$, whence $C_1 = -16 B^3 / (27 A)$ in the notation
of equation~\eqref{eq:BCdef}.
Therefore the conditions $v_n(A)=v_n(B)=v_n(6)=0$ in Theorem~\ref{thm:main2}
already imply that $v_n(C_1)=0$.
However, $F_n$ and hence $C_n$ involve other terms for $n\geq 2$.
For example, the computation just before this remark shows that $F_2=(2B/81)(81A + 27B - 4B^3)$,
so that hypothesis~(2) of Theorem~\ref{thm:main2} is not redundant for $2 \leq j \leq \ell-1$.
\end{remark}

Further define $H_n=H_n(A,B)$ and a polynomial $E_n(t)=E_n(A,B,t)$ by
\begin{equation}
\label{eq:Endef}
H_n:= \frac{B}{3A} F_n^2 + G_n^2 \quad\text{and}\quad
E_n(t):=\big( f^n(\gamma)-t \big) \big( f^n(-\gamma)-t\big) .
\end{equation}
Then we have $E_n(t) = H_n - 2 G_n t + t^2$,
by equation~\eqref{eq:FGdef} and the fact that $\gamma^2 = -B/(3A)$.

\begin{prop}
\label{prop:alphacubic}
Let $f(z)=Az^3+Bz+1\in K[z]$ and $x_0\in K$, with $A\neq 0$.
Suppose that the two critical points $\pm\gamma\in\Kbar$ of $f$ collide at iterate $\ell\geq 2$.
Write $f^{-1}(x_0)=\{\alpha_1,\alpha_2,\alpha_3\}\subseteq\Kbar$. Then
\[ \prod_{i=1}^3 \big(z- E_{\ell-1}(\alpha_i)\big) =
z^3 - s_{\ell,1} z^2 + s_{\ell,2} z - s_{\ell,3} \]
where
\[ s_{\ell,1} := \frac{B}{A} + 12 G_{\ell-1}^2,
\quad
s_{\ell,2} := -\frac{6}{A} G_{\ell-1}\big( f^\ell(\gamma) - x_0 \big),
\quad
s_{\ell,3} := \frac{1}{A^2} \big( f^\ell(\gamma) - x_0 \big)^2 . \]
In addition, we have
\[ s_{\ell,2}^2 - 4s_{\ell,1}s_{\ell,3}
= -\frac{4B}{3A^3} F_{\ell-1}^2 \big( f^\ell(\gamma) - x_0 \big)^2 .\]
\end{prop}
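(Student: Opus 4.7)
The plan is to compute the three elementary symmetric functions of $E_{\ell-1}(\alpha_1),E_{\ell-1}(\alpha_2),E_{\ell-1}(\alpha_3)$ individually. Since $f(z)-x_0 = A\prod_i(z-\alpha_i) = Az^3+Bz+(1-x_0)$, we first read off the elementary symmetric polynomials of the $\alpha_i$: namely $e_1=0$, $e_2 = B/A$, and $e_3 = (x_0-1)/A$. Throughout, the workhorse will be the collision identity \eqref{eq:ellcond}, which in the form $BF_{\ell-1}^2 = 3B + 9AG_{\ell-1}^2$ lets us convert between $F_{\ell-1}^2$ and expressions in $G_{\ell-1}^2$.

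For $s_{\ell,3}$, I would factor $E_{\ell-1}(\alpha_i) = \bigl(f^{\ell-1}(\gamma)-\alpha_i\bigr)\bigl(f^{\ell-1}(-\gamma)-\alpha_i\bigr)$ and apply the evaluation $\prod_i (w-\alpha_i) = (f(w)-x_0)/A$ with $w = f^{\ell-1}(\pm\gamma)$. This immediately gives $s_{\ell,3} = (f^{\ell}(\gamma)-x_0)(f^{\ell}(-\gamma)-x_0)/A^2 = (f^{\ell}(\gamma)-x_0)^2/A^2$, using the collision $f^{\ell}(\gamma)=f^{\ell}(-\gamma)$. For $s_{\ell,1}$, expanding $\sum_i E_{\ell-1}(\alpha_i) = \sum\alpha_i^2 - 2G_{\ell-1}\sum\alpha_i + 3H_{\ell-1}$, substituting $H_{\ell-1} = \tfrac{B}{3A}F_{\ell-1}^2 + G_{\ell-1}^2$, and replacing $BF_{\ell-1}^2/A$ by $3B/A + 9G_{\ell-1}^2$ via the collision identity collapses the sum to $B/A + 12G_{\ell-1}^2$.

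The real work lies in $s_{\ell,2}$. My strategy is to shift coordinates by setting $\alpha_i':=\alpha_i-G_{\ell-1}$, so that $E_{\ell-1}(\alpha_i) = \alpha_i'^{\,2} + \beta$ with $\beta := \tfrac{B}{3A}F_{\ell-1}^2$; the elementary symmetric functions of the $\alpha_i'$ are read off from the coefficients of $f(z+G_{\ell-1})-x_0$, whose constant term is $f(G_{\ell-1})-x_0$. Writing $\prod_i\bigl(w - E_{\ell-1}(\alpha_i)\bigr) = \prod_i\bigl((w-\beta) - \alpha_i'^{\,2}\bigr)$ and expanding in $w$ identifies $s_{\ell,2} = 3\beta^2 + 2\beta\sum\alpha_i'^{\,2} + \sum_{i<j}\alpha_i'^{\,2}\alpha_j'^{\,2}$. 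Newton's identities produce the two symmetric functions of the $\alpha_i'^{\,2}$; then several applications of the collision identity eliminate all occurrences of $F_{\ell-1}^2$, eventually collapsing the expression to $18G_{\ell-1}^2(B/A+3G_{\ell-1}^2) - 6G_{\ell-1}(f(G_{\ell-1})-x_0)/A$. To match the target $-6G_{\ell-1}(f^{\ell}(\gamma)-x_0)/A$, I will expand $f^{\ell}(\gamma) = f(F_{\ell-1}\gamma+G_{\ell-1})$, use $\gamma^2=-B/(3A)$, and again invoke the collision identity to kill the coefficient of $\gamma$, yielding the closed form $f^{\ell}(\gamma) = -8AG_{\ell-1}^3 - 2BG_{\ell-1} + 1$; the two expressions then coincide after one more algebraic step.

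Finally, the identity for $s_{\ell,2}^2 - 4s_{\ell,1}s_{\ell,3}$ is a short plug-and-chug: the common factor $(f^{\ell}(\gamma)-x_0)^2/A^2$ comes out, leaving the bracket $36G_{\ell-1}^2 - 4(B/A + 12G_{\ell-1}^2) = -4(3AG_{\ell-1}^2 + B)/A$, and one last application of the collision identity in the form $9AG_{\ell-1}^2 + 3B = BF_{\ell-1}^2$ converts this to $-4BF_{\ell-1}^2/(3A)$, matching the claim. The only genuine obstacle is the bookkeeping in $s_{\ell,2}$; the coordinate translation by $G_{\ell-1}$ and the closed form for $f^{\ell}(\gamma)$ are the key moves that keep the algebra tractable.
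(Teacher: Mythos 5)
Your proposal is correct, and for $s_{\ell,3}$, $s_{\ell,1}$, and the final discriminant-like identity it runs parallel to the paper's argument. The one genuinely different move is your treatment of $s_{\ell,2}$: you complete the square, writing $E_{\ell-1}(t)=(t-G_{\ell-1})^2+\tfrac{B}{3A}F_{\ell-1}^2$, shift to $\alpha_i'=\alpha_i-G_{\ell-1}$, and then compute $\sigma_2$ of the $E_{\ell-1}(\alpha_i)$ via Newton's identities for the power sums of the $\alpha_i'^{\,2}$. The paper instead expands $E_{\ell-1}(\alpha_i)=H_{\ell-1}-2G_{\ell-1}\alpha_i+\alpha_i^2$ directly and computes the second elementary symmetric function in terms of $\sigma_1,\sigma_2,\sigma_3$ of the $\alpha_i$ using the standard reduction formulas $\sum_{i\neq j}\alpha_i^2\alpha_j=\sigma_1\sigma_2-3\sigma_3$ and $\sum_{i<j}\alpha_i^2\alpha_j^2=\sigma_2^2-2\sigma_1\sigma_3$. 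Your shift buys a slightly more structured intermediate object (symmetric functions of squares of shifted roots), at the cost of an extra change of coordinates; the paper's route avoids the shift but carries the cross-term $-2G_{\ell-1}\alpha_i$ through. You also establish the key closed form $f^\ell(\gamma)=-8AG_{\ell-1}^3-2BG_{\ell-1}+1$ by expanding $f(F_{\ell-1}\gamma+G_{\ell-1})$, using $\gamma^2=-B/(3A)$, and invoking the collision identity to annihilate the $\gamma$-coefficient (i.e.\ $F_\ell=0$); the paper instead chains together the iteration formulas~\eqref{eq:Fiter}--\eqref{eq:Giter} and the identity~\eqref{eq:Hident} to reach the same expression. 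Both derivations are elementary and equivalent in rigor; neither is clearly shorter, so the choice is a matter of bookkeeping taste.
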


\begin{proof}
This is a brute-force calculation. First, observe that equation~\eqref{eq:ellcond} yields
\begin{equation}
\label{eq:Hident}
H_{\ell-1} = \frac{B}{3A} F_{\ell-1}^2 + G_{\ell-1}^2
=  3G_{\ell-1}^2 + \frac{B}{A} + G_{\ell-1}^2 = \frac{B}{A} + 4 G_{\ell-1}^2.
\end{equation}
The elementary symmetric functions of $\alpha_1,\alpha_2,\alpha_3$ are
\[ \sigma_1:=\sigma_1(\{\alpha_i\}) = \alpha_1 + \alpha_2 + \alpha_3=0,
\quad
\sigma_2:=\sigma_2(\{\alpha_i\}) = \alpha_1 \alpha_2 + \alpha_1 \alpha_3 + \alpha_2 \alpha_3 = \frac{B}{A},\]
and
\[ \sigma_3:=\sigma_3(\{\alpha_i\}) = \alpha_1 \alpha_2 \alpha_3 = \frac{x_0-1}{A},\]
since $f(z)-x_0 = Az^3+Bz+1-x_0$ has roots $\{\alpha_i\}$.
Thus, the negative of the coefficient of $z^2$ in $q(z):=\prod_{i=1}^3 (z- E_{\ell-1}(\alpha_i))$
is $\sigma_1(\{E_{\ell-1}(\alpha_i)\}$, which is
\begin{align*}
\sum_{i=1}^3 E_{\ell-1}(\alpha_i) &= 3H_{\ell-1} - 2 G_{\ell-1} \sigma_1 + \sum_{i=1}^3 \alpha_i^2
= 3\bigg( \frac{B}{A} + 4 G_{\ell-1}^2  \bigg) - 0 + (\sigma_1^2 - 2\sigma_2)
\\
&= 3\frac{B}{A} + 12G_{\ell-1}^2 + \bigg( 0 - 2\frac{B}{A} \bigg)
= \frac{B}{A} + 12G_{\ell-1}^2 = s_{\ell,1}.
\end{align*}

Similarly, the $z$-coefficient of $q(z)$ is $\sigma_2(\{E_{\ell-1}(\alpha_i)\}$. Writing out
$E_{\ell-1}(\alpha_i)=H_{\ell-1} - 2G_{\ell-1} \alpha_i + \alpha_i^2$ for each $i=1,2,3$, then, this coefficient is
\begin{align*}
3H_{\ell-1}^2 & - 4G_{\ell-1}H_{\ell-1} \sigma_1 + 2H_{\ell-1} \sum_{i=1}^3 \alpha_i^2
+4G_{\ell-1}^2\sigma_2 - 2G_{\ell-1}\sum_{i\neq j} \alpha_i^2 \alpha_j + \sum_{i<j}^3 \alpha_i^2 \alpha_j^2
\\
&= 3H_{\ell-1}^2 - 0-4\frac{B}{A} H_{\ell-1} 
+ 4G_{\ell-1}^2 \cdot \frac{B}{A} - 2 G_{\ell-1} \cdot 3\frac{(1-x_0)}{A} + \frac{B^2}{A^2}
\\
&= 3H_{\ell-1}\bigg( H_{\ell-1} - \frac{B}{A} \bigg) - \frac{6}{A} G_{\ell-1} (1-x_0)
= -\frac{6}{A} G_{\ell-1} \big( -2AG_{\ell-1} H_{\ell-1} + 1 -x_0 \big),
\end{align*}
where we have used formula~\eqref{eq:Hident} and the identities
\[ \sum_{i\neq j} \alpha_i^2 \alpha_j = \sigma_1 \sigma_2 - 3\sigma_3 = 3\frac{(1-x_0)}{A}
\quad\text{and}\quad
\sum_{i<j}^3 \alpha_i^2 \alpha_j^2 = \sigma_2^2 - 2\sigma_1\sigma_3 = \frac{B^2}{A^2}.\]
In addition, we have
\begin{align*}
-2AG_{\ell-1} H_{\ell-1} + 1 &= -8AG_{\ell-1}^3 -2BG_{\ell-1} + 1
= AG_{\ell-1}^3 + B G_{\ell-1} + 1 - BF_{\ell-1}^2 G_{\ell-1}
\\
&= G_{\ell} = F_{\ell} \gamma + G_{\ell} = f^{\ell}(\gamma),
\end{align*}
where the first equality is by formula~\eqref{eq:Hident}, the second
is by~\eqref{eq:ellcond}, the third is by~\eqref{eq:Giter}, and the fourth is because $F_{\ell}=0$.
Thus, we have shown that the $z$-coefficient of $q(z)$ is
\[ -\frac{6}{A} G_{\ell-1} \big( f^{\ell}(\gamma) - x_0 \big) = s_{\ell,2}. \]

Next, the negative of the constant term of $q(z)$ is $\sigma_3(\{E_{\ell-1}(\alpha_i)\}$, which is
\[
\prod_{i=1}^3 \big( f^{\ell-1}(\gamma)-\alpha_i \big) \big( f^{\ell-1}(-\gamma)-\alpha_i\big)
= \frac{1}{A^2} \big( f^{\ell}(\gamma)-x_0 \big)^2 = s_{\ell,3},
\]
since $f(z) = A\prod_{i=1}^3 (z-\alpha_i)$ and $f^{\ell}(-\gamma)=f^{\ell}(\gamma)$.

Finally, combining our formulas for $s_{\ell,1},s_{\ell,2},s_{\ell,3}$ with equation~\eqref{eq:ellcond} gives
\[ s_{\ell,2}^2 - 4s_{\ell,1}s_{\ell,3} =
\frac{4}{A^3}\big( f^\ell(\gamma) - x_0 \big)^2\big( 9 A G_{\ell-1}^2 - B - 12 A G_{\ell-1}^2 \big)
= -\frac{4B}{3A^3}F_{\ell-1}^2\big( f^\ell(\gamma) - x_0 \big)^2.
\qedhere \]
\end{proof}

\begin{prop}
\label{prop:quartic}
With notation as in Proposition~\ref{prop:alphacubic},
let $\delta_i := \sqrt{E_{\ell-1}(\alpha_i)}$ for each $i=1,2,3$,
and define $\theta_1,\theta_2,\theta_3,\theta_4\in\Kbar$ be the four values of
\[ \pm \delta_1 \delta_2 \pm \delta_1 \delta_3 \pm \delta_2 \delta_3 \]
for which an even number of the $\pm$ signs are $-$. Then
\[ \prod_{i=1}^4 (z- \theta_i) = z^4 -2 s_{\ell,2} z^2 - 8s_{\ell,3} z + (s_{\ell,2}^2 - 4s_{\ell,1}s_{\ell,3}) \]
where
\[ \prod_{i=1}^3 \big(z- E_{\ell-1}(\alpha_i)\big) =
z^3 - s_{\ell,1} z^2 + s_{\ell,2} z - s_{\ell,3} .\]
\end{prop}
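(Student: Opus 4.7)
The approach is to exploit two symmetries of the construction. First I would observe that the set $\{\theta_1,\theta_2,\theta_3,\theta_4\}$ is stable both under permutations of $(\delta_1,\delta_2,\delta_3)$ and under each sign change $\delta_i \mapsto -\delta_i$; the latter holds because flipping one $\delta_i$ reverses the sign of exactly two of the three products $\delta_1\delta_2,\delta_1\delta_3,\delta_2\delta_3$, which preserves the even-minus-sign condition defining the $\theta_j$'s. Consequently, the coefficients of $\prod_{i=1}^{4}(z - \theta_i)$ are symmetric polynomials with integer coefficients in $\delta_1^2,\delta_2^2,\delta_3^2 = E_{\ell-1}(\alpha_1),E_{\ell-1}(\alpha_2),E_{\ell-1}(\alpha_3)$, and hence integer polynomials in $s_{\ell,1},s_{\ell,2},s_{\ell,3}$ as given by Proposition~\ref{prop:alphacubic}.

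To compute those coefficients, I would introduce the shorthand $u := \delta_1\delta_2$, $v := \delta_1\delta_3$, $w := \delta_2\delta_3$, so that $\theta_1 = u+v+w$, $\theta_2 = u-v-w$, $\theta_3 = -u+v-w$, $\theta_4 = -u-v+w$, and record the identities $u^2+v^2+w^2 = s_{\ell,2}$ and $uvw = (\delta_1\delta_2\delta_3)^2 = s_{\ell,3}$. Then $e_1 := \sum\theta_i = 0$ is immediate, accounting for the absence of a $z^3$ term. For $e_2$, each of the three complementary pairings $(\theta_1\theta_2+\theta_3\theta_4)$, $(\theta_1\theta_3+\theta_2\theta_4)$, $(\theta_1\theta_4+\theta_2\theta_3)$ collapses by two difference-of-squares identities to $2x^2-2y^2-2z^2$ for an assignment $\{x,y,z\} = \{u,v,w\}$, and summing the three yields $e_2 = -2(u^2+v^2+w^2) = -2s_{\ell,2}$. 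For $e_4 := \prod\theta_i$, the factorization
\[ e_4 = \bigl[(u+v+w)(u-v-w)\bigr]\bigl[(-u+v-w)(-u-v+w)\bigr] = (u^2-v^2-w^2)^2 - 4v^2w^2 \]
becomes, after setting $p=u^2,q=v^2,r=w^2$, the quantity $(p+q+r)^2 - 4(pq+qr+rp)$; since $p+q+r=s_{\ell,2}$ and $pq+qr+rp = (\delta_1\delta_2\delta_3)^2(\delta_1^2+\delta_2^2+\delta_3^2) = s_{\ell,1}s_{\ell,3}$, we obtain $e_4 = s_{\ell,2}^2 - 4s_{\ell,1}s_{\ell,3}$, matching the claimed constant term.

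For $e_3$ I would bypass any direct expansion (which would be the main computational obstacle if attempted head-on) and exploit the symmetric-function framework instead. Because $e_3$ is homogeneous of degree $3$ in the $\delta_i^2$'s, it has weighted degree $3$ in $s_{\ell,1},s_{\ell,2},s_{\ell,3}$ when $s_{\ell,k}$ is given weight $k$, so $e_3 = as_{\ell,1}^3 + bs_{\ell,1}s_{\ell,2} + cs_{\ell,3}$ for integers $a,b,c$. Three specializations pin these down: $(\delta_1,\delta_2,\delta_3)=(1,0,0)$ sends every $\theta_i$ to $0$, forcing $a=0$; the case $(1,1,0)$ gives $\theta_1=\theta_2=1$ and $\theta_3=\theta_4=-1$, hence $e_3=0$ while $s_{\ell,1}s_{\ell,2}=2$, forcing $b=0$; and the case $(1,1,1)$ gives $\theta_1=3$ and $\theta_2=\theta_3=\theta_4=-1$, so that each of the three triple products involving $\theta_1$ equals $3$ while $\theta_2\theta_3\theta_4=-1$, yielding $e_3 = 9-1 = 8$ and $s_{\ell,3}=1$, so $c=8$. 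Thus $e_3 = 8s_{\ell,3}$, and the coefficient of $z$ in $\prod(z-\theta_i)$ is $-e_3 = -8s_{\ell,3}$, completing the identification.

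The only conceptual step is verifying the initial symmetry claim cleanly; once that is in hand, all four coefficients follow from elementary algebra together with three simple specializations.
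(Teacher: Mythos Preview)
Your argument is correct. For $e_1$, $e_2$, and $e_4$ your computation is essentially the paper's: both use the pairings $\theta_1+\theta_2=2\delta_1\delta_2$, $\theta_3+\theta_4=-2\delta_1\delta_2$ (and their cyclic analogues) together with difference-of-squares factorizations of $\theta_1\theta_2$ and $\theta_3\theta_4$. The one genuine difference is your treatment of $e_3$. The paper computes it head-on via
\[
\sigma_3(\{\theta_i\}) = (\theta_1+\theta_2)\theta_3\theta_4 + \theta_1\theta_2(\theta_3+\theta_4)
= 2\delta_1\delta_2\bigl(\theta_3\theta_4-\theta_1\theta_2\bigr) = 8\,\delta_1^2\delta_2^2\delta_3^2 = 8s_{\ell,3},
\]
using the explicit formulas for $\theta_1\theta_2$ and $\theta_3\theta_4$ already obtained. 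Your route instead invokes the sign-flip invariance to know in advance that $e_3\in\ZZ[s_{\ell,1},s_{\ell,2},s_{\ell,3}]$ is weighted-homogeneous of degree $3$, and then pins down the three undetermined coefficients by specialization. Both are short; the paper's direct method avoids the need to justify the symmetry framework, while your approach trades that small upfront cost for a computation of $e_3$ that is essentially arithmetic-free and robust against sign slips.
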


\begin{proof}
This is another brute-force calculation.
Writing $\theta_1=\delta_1 \delta_2 + \delta_1 \delta_3 + \delta_2 \delta_3$
and $\theta_2=\delta_1 \delta_2 - \delta_1 \delta_3 - \delta_2 \delta_3$, we have
\begin{equation}
\label{eq:thetasum}
\theta_1 + \theta_2 = 2\delta_1\delta_2
\quad\text{and}\quad \theta_3 + \theta_4 = -2\delta_1\delta_2,
\end{equation}
and similarly for other sums $\theta_i+\theta_j$.

It follows that
\[ \sigma_1(\{\theta_i\}) = \sum_{i=1}^4 \theta_i = 0, \]
so the $z^3$-coefficient of the desired quartic polynomial is $0$. In addition,
\begin{align*}
\sigma_2(\{\theta_i\}) &= \sum_{i<j} \theta_i \theta_j
= \frac{1}{2} (\theta_1 + \theta_2)(\theta_3 + \theta_4) +
\frac{1}{2}(\theta_1 + \theta_3)(\theta_2 + \theta_4) +
\frac{1}{2}(\theta_1 + \theta_4)(\theta_2 + \theta_3)
\\
&=-2\delta_1^2\delta_2^2 -2\delta_1^2\delta_3^2 -2\delta_2^2\delta_3^2
= -2 \sigma_2\big( \big\{ E_{\ell-1}(\alpha_i) \big\} \big) = -2s_{\ell,2},
\end{align*}
since $\delta_i^2 = E_{\ell-1}(\alpha_i)$. Thus,
the $z^2$-coefficient of the quartic is $-2s_{\ell,2}$.

We also have
\[ \theta_1 \theta_2 = \delta_1^2\delta_2^2 - \delta_1^2\delta_3^2 - \delta_2^2\delta_3^2 - 2\delta_1\delta_2\delta_3^2
\quad\text{and}\quad
\theta_3 \theta_4 = \delta_1^2\delta_2^2 - \delta_1^2\delta_3^2 - \delta_2^2\delta_3^2 + 2\delta_1\delta_2\delta_3^2.\]
Combined with equations~\eqref{eq:thetasum}, it follows that
\begin{align*}
\sigma_3(\{\theta_i\}) &= (\theta_1+\theta_2)\theta_3\theta_4 + \theta_1\theta_2(\theta_3+\theta_4)
= 2\delta_1\delta_2\big( \theta_3\theta_4 - \theta_1 \theta_2 \big)
\\
&= 2\delta_1\delta_2\big( - 2\delta_1\delta_2\delta_3^2 -  2\delta_1\delta_2\delta_3^2)
= - 8 \delta_1^2\delta_2^2\delta_3^2= -8\sigma_3\big( \big\{ E_{\ell-1}(\alpha_i) \big\} \big)=-8s_{\ell,3}.
\end{align*}
It also follows that
\begin{align*}
\sigma_4(\{\theta_i\}) &= (\theta_1\theta_2)(\theta_3\theta_4)
= \big(\delta_1^2\delta_2^2 - \delta_1^2\delta_3^2 - \delta_2^2\delta_3^2 + 2\delta_1\delta_2\delta_3^2\big)
\big(\delta_1^2\delta_2^2 - \delta_1^2\delta_3^2 - \delta_2^2\delta_3^2 - 2\delta_1\delta_2\delta_3^2\big)
\\
&=\big(\delta_1^2\delta_2^2 - \delta_1^2\delta_3^2 - \delta_2^2\delta_3^2\big)^2 -4 \delta_1^2\delta_2^2\delta_3^4
\\
&=\big(\delta_1^2\delta_2^2 + \delta_1^2\delta_3^2 + \delta_2^2\delta_3^2\big)^2
-4 \delta_1^4\delta_2^2\delta_3^2 -4 \delta_1^2\delta_2^4\delta_3^2 -4 \delta_1^2\delta_2^2\delta_3^4
\\
&=\sigma_2\big( \big\{ E_{\ell-1}(\alpha_i) \big\} \big)^2
-4 \sigma_1\big( \big\{ E_{\ell-1}(\alpha_i) \big\} \big) \sigma_3\big( \big\{ E_{\ell-1}(\alpha_i) \big\} \big)
=s_{\ell,2}^2 - 4s_{\ell,1}s_{\ell,3},
\end{align*}
giving the desired values for the $z$-coefficient and constant term of the quartic.
\end{proof}

\section{Proving Theorem~\ref{thm:main2}}
\label{sec:galproof}

Before proving our second main theorem, we need a few more results
to connect the polynomials of Section~\ref{sec:tech} to the group-theoretic
results of Section~\ref{sec:surj}.

\subsection{Lemmas on valuations}
\label{ssec:morelemmas}
Throughout Section~\ref{ssec:morelemmas}, we set the following notation.

\begin{tabbing}
\hspace{8mm} \= \hspace{15mm} \=  \kill
\> $\calO_L$: \> a Dedekind domain with field of fractions $L$
with algebraic closure $\Lbar$ \\
\> $M_L^0$: \> the set of non-archimedean places of $L$ \\
\> $f$: \> a cubic polynomial $f(z)=Az^3+Bz+1 \in L[z]$ \\
\> $x_0$: \> an element of $L$, to serve as the root of our preimage tree \\
\> $L_n$: \> for each $n\geq 0$, the extension field $L_n:=L(f^{-n}(x_0))$ \\
\> $E_n(t)$: \> for each $n\geq 1$, the quantity $(f^n(\gamma_1)-t)(f^n(\gamma_2)-t)$, \\
\> \> where $\gamma_1,\gamma_2\in\Lbar$ are the two critical points of $f$ in $\Lbar$ \\
\> $\tilde{E}_n(t)$: \> for $n\geq 1$ for which $f^n(\gamma_1)\in L$,
the quantity $f^n(\gamma_1)- t$ \\
\> $C_n$: \> for each $n\geq 1$, the quantity $(f^n(\gamma_1)-f^n(\gamma_2))^2\in L$
\end{tabbing}
The quantity $E_n(t)$ here coincides with the $E_n(t)$ of equation~\eqref{eq:Endef},
and $C_n$ coincidies with the $C_n$ of equation~\eqref{eq:BCdef}.
In addition, recall from Remark~\ref{rem:BC} that $C_n = -4BF_n^2 / (3A)\in L$,
where $F_n$ is as in equation~\eqref{eq:FGdef}.
Note that for any particular choice of $t\in L$, we have $E_n(t)\in L$.
Moreover, as stated in Theorem~\ref{thm:main1},
if the two critical points collide at the $\ell$-th iteration,
then for $n\geq\ell$, we have both $\tilde{E}_n(t)\in L$ and $C_n=0$.

\begin{lemma}
\label{lem:inherit1}
With notation as above, let $m,n\geq 1$ be integers,
let $\alpha\in f^{-m}(x_0)$, and let $v\in M_L^0$ such that $v(A)=0$.
\begin{enumerate}
\item
If $v(E_{m+n}(x_0))$ is odd, then there is some $w\in M_{L(\alpha)}^0$ such that $w(E_{n}(\alpha))$ is odd.
\item
If $f^n(\gamma_1)\in L$, and if $v(\tilde{E}_{m+n}(x_0))$ is odd,
then there is some $w\in M_{L(\alpha)}^0$ such that $w(\tilde{E}_{n}(\alpha))$ is odd.
\end{enumerate}
\end{lemma}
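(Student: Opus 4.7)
The plan is to factor $E_{m+n}(x_0)$ across the fibre $f^{-m}(x_0)$, regroup the resulting product by Galois orbits into norms, and then read off an odd-valued place via the standard valuation-of-norm formula.

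First I would use that the lead coefficient of $f^m$ is $A^{(3^m-1)/2}$ to write
$$ f^m(z)-x_0 \;=\; A^{(3^m-1)/2}\prod_{\beta\in f^{-m}(x_0)}(z-\beta). $$
Substituting $z=f^n(\gamma_1)$ and $z=f^n(\gamma_2)$ and multiplying the two resulting equations yields
$$ E_{m+n}(x_0) \;=\; A^{3^m-1}\prod_{\beta\in f^{-m}(x_0)} E_n(\beta). $$
Because $E_n\in L[t]$, the map $\beta\mapsto E_n(\beta)$ is $\Gal(\Lbar/L)$-equivariant, so collecting the product by Galois orbits $O\subseteq f^{-m}(x_0)$ collapses each orbital sub-product into a norm, giving
$$ E_{m+n}(x_0) \;=\; A^{3^m-1}\prod_{O} N_{L(\beta_O)/L}\bigl(E_n(\beta_O)\bigr), $$
where $\beta_O$ is any chosen representative of $O$.

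Applying $v$ and using the hypothesis $v(A)=0$, I obtain
$$ v\bigl(E_{m+n}(x_0)\bigr) \;=\; \sum_{O}\, v\Bigl(N_{L(\beta_O)/L}\bigl(E_n(\beta_O)\bigr)\Bigr). $$
If the left side is odd, at least one orbital summand is odd. Since this summand depends only on the orbit of $\beta_O$ and $L(\alpha')=L(\alpha)$ for any Galois conjugate $\alpha'$ of $\alpha$, I may take $\alpha$ to lie in that odd orbit (replacing it by a Galois conjugate if necessary). Applying the classical identity
$$ v\bigl(N_{L(\alpha)/L}(y)\bigr) \;=\; \sum_{w\mid v} f(w/v)\,w(y),\qquad y\in L(\alpha)^{\times}, $$
to $y=E_n(\alpha)$ and using the oddness of the left side forces at least one place $w\in M_{L(\alpha)}^0$ above $v$ to satisfy $w(E_n(\alpha))$ odd, proving (1).

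For (2), I would repeat the argument verbatim with $\tilde E_n(t)=f^n(\gamma_1)-t$ in place of $E_n(t)$, using the factorization $\tilde E_{m+n}(x_0)=A^{(3^m-1)/2}\prod_{\beta}\tilde E_n(\beta)$ obtained by substituting only $z=f^n(\gamma_1)$ into $f^m(z)-x_0$; the hypothesis $f^n(\gamma_1)\in L$ is exactly what ensures that each $\tilde E_n(\beta)$ lies in $L(\beta)$ and so behaves well under norms. The only real obstacle I anticipate is bookkeeping at function-field places at infinity (which are included in $M_L^0$ per the conventions after Theorem~\ref{thm:main2}) and arranging that $\alpha$ genuinely represents the orbit carrying the odd contribution --- the latter is routine in the intended application, where other hypotheses ensure $f^{-m}(x_0)$ is a single Galois orbit, so the odd summand lives where it must.
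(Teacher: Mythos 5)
Your proof follows the same route as the paper's: factor $f^m(z)-x_0 = A^{(3^m-1)/2}\prod_{\beta\in f^{-m}(x_0)}(z-\beta)$, substitute the critical iterates to get $E_{m+n}(x_0)=A^{3^m-1}\prod_\beta E_n(\beta)$, and then extract an odd place over $v$ via the norm--valuation formula $v(N_{L(\alpha)/L}(y))=\sum_{w\mid v} f_w\, w(y)$. The paper's own proof actually skips your orbit decomposition entirely: it applies the norm relation directly to $\prod_\beta E_n(\beta)$ as though this product were $N_{L(\alpha)/L}(E_n(\alpha))$, which tacitly assumes the $3^m$ preimages $\beta$ form a single Galois orbit, i.e.\ that $f^m(z)-x_0$ is irreducible over $L$. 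You make that subtlety visible, which is to your credit. One small slip in your wording: ``replacing $\alpha$ by a Galois conjugate'' cannot move $\alpha$ into a \emph{different} orbit (conjugates of $\alpha$ are, by definition, the other elements of $\alpha$'s orbit), so this phrase does not rescue the reducible case --- but you correctly flag that the real resolution is that, in every application of this lemma, irreducibility of $f^m(z)-x_0$ is guaranteed (by the place $u$ and the Newton-polygon argument in Lemma~\ref{lem:looper}), so there is only one orbit and the issue evaporates; the paper's proof implicitly needs the same thing. The sketch of part (2) via $\tilde E_n(t)=f^n(\gamma_1)-t$ and the single substitution $z=f^n(\gamma_1)$ matches the paper's ``the second is similar.''
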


\begin{proof}
We prove the first statement; the second is similar.
We have
\[ f^m(z) -x_0 = A^{(3^m-1)/2} \prod_{\beta} (z-\beta), \]
where the product is over all $\beta\in f^{-m}(x_0)$.
Substituting $f^n(\gamma_1)$ and $f^n(\gamma_2)$ for $z$, we have
\begin{align*}
A^{3^m-1}\prod_{\beta} E_n(\beta)
&= A^{3^m-1}\prod_{\beta} \big(f^n(\gamma_1)-\beta\big) \big(f^n(\gamma_2)-\beta\big) \\
&= \big(f^{m+n}(\gamma_1)-x_0\big) \big(f^{m+n}(\gamma_2)-x_0\big) = E_{m+n}(x_0) .
\end{align*}
Therefore, recalling that $|A|_v=1$, it follows that
\[ \sum_{w|v} e_w f_w \log |E_n(\alpha)|_w
=\log \bigg| \prod_{\beta} E_n(\beta) \bigg|_{v}
= \log \big| E_{m+n}(x_0) \big|_{v}, \]
where the sum is over places $w\in M_{L(\alpha)}^0$ lying over $v$,
and where $e_w$ and $f_w$ denote the associated ramification index
and residue field extension degree, respectively.
(The first equality above is a standard norm relation for absolute values;
see, for example, Theorem~I.4.5 of \cite{Lang}.)
Thus, if $\pi_v\in L$ and $\pi_w\in L(\alpha)$ are uniformizers for $v$ and $w$, respectively, then
\begin{align*}
v\big(E_{m+n}(x_0)\big) &= \frac{\log \big| E_{m+n}(x_0) \big|_{v}}{\log|\pi_v|_v} 
= \sum_{w|v} e_w f_w \cdot \frac{\log \big| E_{n}(\alpha) \big|_{w}}{\log|\pi_v|_v} \\
&= \sum_{w|v} \frac{\log|\pi_v|_v}{\log|\pi_w|_w} \cdot f_w \cdot
\frac{\log \big| E_n(\alpha) \big|_{w}}{\log|\pi_v|_v}
= \sum_{w|v} f_w \cdot w\big(E_n(\alpha)\big).
\end{align*}
The right side of this equation is a sum of integers, and by hypothesis the left side is odd.
Thus, there must be some $w|v$ such that
$f_w \cdot w(E_n(\alpha))$, and hence $w(E_{n}(\alpha))$, is odd.
\end{proof}

The following result is essentially Proposition~2.3 of \cite{Looper}, 
which applies to polynomials like our $f(z)=Az^3+Bz+1$ that have only three nonzero terms;
see also Proposition~3.4 of \cite{BenJuu}.

\begin{lemma}
\label{lem:looper}
With notation as at the start of this section, let $n\geq 1$ be an integer, and suppose that there are places
$v_1,\ldots,v_n\in M_L^0$ such that for each $j=1,\ldots,n$,
\begin{enumerate}%
\item $v_j(A)=v_j(B)=v_j(C_j)=v_j(6)=0\leq v_j(x_0)$,
\item $v_j(E_i(x_0))=0$ for all $1\leq i \leq j-1$, and
\item $v_j(E_j(x_0))$ is odd.
\end{enumerate}
Suppose further that there is a non-archimedean place $u\in M_L^0$ such that $u(A)=0\leq u(B)$,
and $u(x_0)$ is negative and prime to $3$.
Then $\Gal(L_n/L)\cong \Aut(T_{3,n})$.
\end{lemma}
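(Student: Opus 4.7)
The plan is to prove the lemma by induction on $n$, following the general strategy of \cite{Looper} and \cite{BenJuu}. For the base case $n=1$, the Newton polygon of $f(z)-x_0=Az^3+Bz+(1-x_0)$ at $u$ consists of a single segment of slope $-u(x_0)/3$ in lowest terms, since $u(A)=0$, $u(B)\geq 0$, and $\gcd(u(x_0),3)=1$; this forces $f-x_0$ to be irreducible over the completion $L_u$, hence over $L$. Independently, formula~\eqref{eq:cubiciter} gives $\Delta(f-x_0)=-27A^2E_1(x_0)$, whose $v_1$-valuation is odd: $v_1(-27A^2)=0$ since $v_1(6)=v_1(A)=0$ forces $v_1(27)=0$, while $v_1(E_1(x_0))$ is odd by hypothesis. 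Hence $\Delta(f-x_0)$ is a non-square in $L$, and combined with irreducibility this gives $\Gal(L_1/L)\cong S_3\cong\Aut(T_{3,1})$.

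For the inductive step, assume $G_{n-1}:=\Gal(L_{n-1}/L)\cong\Aut(T_{3,n-1})$, and let $H:=\Gal(L_n/L_{n-1})$, which embeds as a subgroup of the base group $S_3^{3^{n-1}}$ of the wreath product $\Aut(T_{3,n})$. The goal is to show $H$ equals this full base group. First I would observe that $f^n(z)-x_0$ is irreducible over $L_u$ by the Newton polygon argument: the non-leading, non-constant coefficients of $f^n$ all have non-negative $u$-valuation, so the polygon is a single segment of slope $-u(x_0)/3^n$ in lowest terms since $\gcd(u(x_0),3^n)=1$. Hence $f^n-x_0$ is irreducible over $L$, and $G_n$ acts transitively on the $3^n$ level-$n$ nodes; an orbit-stabilizer count combined with inductive transitivity of $G_{n-1}$ forces $H$ to act transitively on the three children of each $\alpha\in f^{-(n-1)}(x_0)$, so each $f(z)-\alpha$ is irreducible over $L_{n-1}$ and each projection $\pi_\alpha(H)\subseteq S_3$ contains $A_3$. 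Next, from the identity $E_n(x_0)=A^{3^{n-1}-1}\prod_\alpha E_1(\alpha)$ together with $\Delta(f-\alpha)=-27A^2E_1(\alpha)$, I would compute $[\prod_\alpha\Delta(f-\alpha)]=[-3]\cdot[E_n(x_0)]$ in $L^\times/(L^\times)^2$. The classes $[-3],[E_1(x_0)],\ldots,[E_n(x_0)]$ are $\FF_2$-independent in $L^\times/(L^\times)^2$ by the $v_j$-oddness hypotheses; meanwhile, by Kummer theory and the computation $\Aut(T_{3,n-1})^{\mathrm{ab}}\cong(\ZZ/2)^{n-1}$, the kernel of $L^\times/(L^\times)^2\to L_{n-1}^\times/(L_{n-1}^\times)^2$ is precisely the rank-$(n-1)$ subgroup spanned by $[\Delta(f^j-x_0)]=[(-3)^j]\prod_{i\leq j}[E_i(x_0)]$ for $j=1,\ldots,n-1$. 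Since $[E_n(x_0)]$ does not appear in any of those generators, the class $[-3]\cdot[E_n(x_0)]$ remains nontrivial in $L_{n-1}^\times/(L_{n-1}^\times)^2$, so $\prod_\alpha\Delta(f-\alpha)$ is a non-square in $L_{n-1}$. By the transitive action of $G_{n-1}$ on the $\alpha$'s, if any single $\Delta(f-\alpha)$ were a square then all would be and so would the product, a contradiction; hence each $\Delta(f-\alpha)$ is a non-square and each $\pi_\alpha(H)=S_3$.

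The main obstacle is the final independence step: showing the $3^{n-1}$ classes $\{[\Delta(f-\alpha)]\}$ are $\FF_2$-linearly independent in $L_{n-1}^\times/(L_{n-1}^\times)^2$, which is equivalent to $H$ filling out all of $S_3^{3^{n-1}}$ (so that the sign characters of the $3^{n-1}$ factors are jointly realized by $H$). Any nontrivial $\FF_2$-relation $\prod_{\alpha\in S}\Delta(f-\alpha)\in(L_{n-1}^\times)^2$ would produce a proper $G_{n-1}$-invariant submodule of the permutation module $\FF_2^{3^{n-1}}$; the arboreal structure of $\Aut(T_{3,n-1})$ restricts such submodules to those spanned by indicator functions of subtrees at a common level, and the plan is to rule out each candidate by iterating Lemma~\ref{lem:inherit1} at the places $v_1,\ldots,v_{n-1}$ to exhibit characters of $L_{n-1}^\times/(L_{n-1}^\times)^2$ that distinguish subtrees at each level. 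Combining this independence with the earlier steps yields $H=S_3^{3^{n-1}}$, and hence $\Gal(L_n/L)\cong\Aut(T_{3,n})$, closing the induction.
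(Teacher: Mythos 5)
Your proof follows a genuinely different strategy from the paper's, and the central step of that strategy is left as a plan rather than a proof.

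\textbf{Where you agree with the paper.} The base case and the irreducibility argument via the place $u$ (Newton polygon with slope prime to $3$) are the same as the paper's Step~1, and your identity $\Delta(f-x_0)=-27A^2E_1(x_0)$ together with the odd valuation at $v_1$ is a correct proof that $\Gal(L_1/L)\cong S_3$. Your observation that $\prod_\alpha\Delta(f-\alpha)\equiv -3E_n(x_0)$ modulo squares, and that this is a non-square in $L_{n-1}$ because $v_n$ has odd valuation on $E_n(x_0)$ while the Kummer kernel of $L^\times/(L^\times)^2\to L_{n-1}^\times/(L_{n-1}^\times)^2$ is spanned by $[\Delta(f^j-x_0)]$ for $j<n$ (which involve only $E_1,\ldots,E_{n-1}$), is correct and is a nice computation.

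\textbf{Where you diverge.} The paper's inductive step does not work with the abelianization at all. It fixes one $\alpha$, uses Lemma~\ref{lem:inherit1} to produce a place $w_j$ over $v_j$ at which $E_1(\alpha)$ has odd valuation, invokes Theorem~2.1 of \cite{Looper} to conclude the inertia group of $\Lambda_\alpha/L(\alpha)$ at $w_j$ is generated by a single transposition, and then proves (Step~4) that $w_j$ is unramified in the compositum $\widehat{\Lambda}_\alpha$ of the other $\Lambda_\beta$'s and in $L_{j-1}$. That localization gives, directly, a tree automorphism that is a single transposition above $\alpha$ and the identity elsewhere, and the full base group $(S_3)^{3^{j-1}}$ follows by conjugation. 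Your route instead establishes surjectivity of each projection $\pi_\alpha(H)$ onto $S_3$ and then tries to conclude $H=S_3^{3^{n-1}}$ by showing the sign characters are jointly onto. That implication is in fact valid: if $H\leq S_3^k$ is a subdirect product and $H\to(\ZZ/2)^k$ is onto, then $H\cap A_3^k$ is a $(\ZZ/2)^k$-stable subgroup of $(\ZZ/3)^k$, hence a coordinate subspace $A_3^I$, and if $I\neq\{1,\ldots,k\}$ one gets a quotient of $H$ of order $2^k$ mapping onto $S_3$, impossible since $6\nmid 2^k$. So the group theory you are relying on is sound.

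\textbf{The gap.} Everything hinges on proving that all $3^{n-1}$ classes $[\Delta(f-\alpha)]$ are $\FF_2$-linearly independent in $L_{n-1}^\times/(L_{n-1}^\times)^2$, and you explicitly leave this as ``the plan.'' Two concrete problems: (i) you have only $n$ places $v_1,\ldots,v_n$ against $3^{n-1}$ classes, so you cannot just assign a place to each class; you therefore appeal to the structure of $\FF_2^{3^{n-1}}$ as a $\Gal(L_{n-1}/L)\cong\Aut(T_{3,n-1})$-module, but your assertion that the only invariant subspaces are ``spanned by indicator functions of subtrees at a common level'' is not justified and is not obviously correct over $\FF_2$; (ii) the hypothesis $v_j(C_j)=0$ never appears in your argument, yet it is essential --- in the paper it is precisely what rules out $w_j$ ramifying over two distinct level-$(j-1)$ nodes $\alpha,\beta$ lying over the two critical values modulo $w_j$, which is exactly what your independence claim needs. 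Without tracking where $C_j$ enters, the independence step cannot go through, because if $w_j$ did ramify over both $\alpha$ and $\beta$ then the single-place test would detect only the combination $[\Delta(f-\alpha)][\Delta(f-\beta)]$ and not the individual classes. So the proposal as written has a genuine hole at its main step; the paper avoids that hole by establishing the localized transposition directly via inertia groups rather than passing through the abelianization.
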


\begin{proof}
\textbf{Step 1}. By the assumptions on the place $u$,
for any $j\geq 1$, the iterate $f^j(z)$ has $u$-integral coefficients,
and its lead coefficient is a power of $A$ and hence is a $u$-unit.
By the assumption on $u(x_0)$, it follows that 
the $u$-adic Newton polygon of $f^j(z)-x_0$ is a single segment of slope $-u(x_0)/3^j$.
Thus, for any root $\beta\in f^{-j}(x_0)$, the field $L(\beta)$ is totally ramified over $u$,
and in particular $f^j(z)-x_0$ is irreducible over $L$.

\medskip

\textbf{Step 2}.
Proceeding inductively up the tree,
for any $j=1,2,\ldots,n$, suppose that we already have $\Gal(L_{j-1}/L)\cong \Aut(T_{3,j-1})$.
(Note that this supposition is trivial for $j=1$.)
It suffices to prove that $\Gal(L_j/L)\cong \Aut(T_{3,j})$
under this assumption.

For each $\alpha\in f^{-(j-1)}(x_0)$, define $\Lambda_\alpha$ to be the extension of $L(\alpha)$
formed by adjoining all three elements of $f^{-1}(\alpha)$. Further define $\widehat{\Lambda}_{\alpha}$
to be the compositum $\widehat{\Lambda}_{\alpha} := \prod_{\beta\neq\alpha} \Lambda_{\beta}$,
where this product is over all $\beta\in f^{-(j-1)}(x_0)\smallsetminus\{\alpha\}$.
Note that $\widehat{\Lambda}_{\alpha}$ contains every $\beta\in f^{-(j-1)}(x_0)\smallsetminus\{\alpha\}$
and hence also contains $\alpha$. In particular, $\widehat{\Lambda}_{\alpha}$ contains
the field $L_{j-1} = L(f^{-(j-1)}(x_0))$. See Figure~\ref{fig:Ltower}.

\begin{figure}
\begin{tikzpicture}
\draw (0.1,.2) -- (0.7,.55);
\draw (1.4,1.1) -- (2,1.45);
\draw (2.55,1.8) -- (3.3,2.25);
\draw (.3,2.1) -- (1.0,2.5);
\draw (1.65,3) -- (2.5,3.5);
\draw (0.9,1.1) -- (0.4,1.6);
\draw (2.27,1.9) -- (1.7,2.45);
\draw (3.45,2.75) -- (2.7,3.5);
\node (L) at (-0.1,0.1) {\small $L$};
\node (La) at (1.1,0.8) {\small $L(\alpha)$};
\node (Lj1) at (2.5,1.6) {\small $L_{j-1}$};
\node (LAh) at (3.7,2.5) {\small $\hat{\Lambda}_{\alpha}$};
\node (LA) at (.3,1.9) {\small $\Lambda_{\alpha}$};
\node (LALj) at (1.5,2.8) {\small $\Lambda_{\alpha} L_{j-1}$};
\node (Lj) at (2.7,3.8) {\small $\Lambda_{\alpha} \hat{\Lambda}_{\alpha} = L_j$};
\end{tikzpicture}
\caption{The fields in the proof of Lemma~\ref{lem:looper}.}
\label{fig:Ltower}
\end{figure}
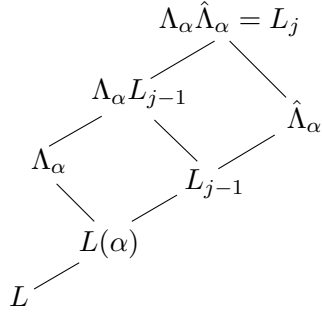

Fix $\alpha\in f^{-(j-1)}(x_0)$.
By Lemma~\ref{lem:inherit1}(1) and our hypothesis that $v_j(E_j(x_0))$ is odd,
there is some $w_j\in M_{L(\alpha)}^0$ such that $w_j(E_1(\alpha))$ is odd.
We claim that $w_j$ does not ramify in $\widehat{\Lambda}_{\alpha}$.
We will prove this claim in Step~4; first we turn to its consequences.

\medskip

\textbf{Step 3}. Assume the claim of Step~2.
Because $w_j(E_1(\alpha))$ is odd with $w_j(3)=w_j(A)=0$,
it follows from equation~\eqref{eq:cubiciter} that $w_j(\Delta(f(z)-\alpha))$ is odd,
and hence $w_j$ must ramify in $\Lambda_{\alpha}$.
By Theorem~2.1 of \cite{Looper}, the inertia group of
$\Lambda_{\alpha} / L(\alpha)$ at $w_j$
is generated by a transposition of two elements of $f^{-1}(\alpha)$.
(See also Lemma~2.2 of \cite{BenJuu}; this fact about inertia relies on the fact that $f(z)-\alpha$
is a trinomial.) Since $w_j$ does not ramify in $\widehat{\Lambda}_{\alpha}$ by the claim of Step~2,
it follows that the inertia group of $L_j = \Lambda_{\alpha} \widehat{\Lambda}_{\alpha}$ over $L(\alpha)$
at $w_j$ also is generated by a transposition of two elements of $f^{-1}(\alpha)$.

Thus, $\Gal(L_j/\widehat{\Lambda}_{\alpha})$
contains an element $\tau$ that acts as a transposition of two elements $\alpha'_0,\alpha'_1$ of $f^{-1}(\alpha)$,
and which necessarily fixes all elements of $f^{-j}(x_0)\smallsetminus \{\alpha'_0,\alpha'_1\}$,
including the third point $\alpha'_2$ of $f^{-1}(\alpha)$.
The larger group $\Gal(L_j/L_{j-1})$ therefore also contains $\tau$.
Since $\Gal(L_j/L)$ acts transitively on $f^{-j}(x_0)$ by Step~1, we may conjugate $\tau$ by
some $\sigma\in\Gal(L_j/L)$ for which $\sigma(\alpha'_0)=\alpha'_2$, to obtain $\tau'\in\Gal(L_j/L_{j-1})$
that transposes $\alpha'_1$ and $\alpha'_2$ while fixing all other elements of $f^{-j}(x_0)$.
Together, $\tau$ and $\tau'$ generate the full group $S_3$ of permutations of $f^{-1}(\alpha)$
that fix $f^{-j}(x_0)\smallsetminus f^{-1}(\alpha)$.

Again by Step~1, 
$\Gal(L_{j-1}/L)$ acts transitively on $f^{-(j-1)}(x_0)$, so we may conjugate the above copy of $S_3$
to act on $f^{-1}(\beta)$ for any $\beta\in f^{-(j-1)}(x_0)$.
It follows that $\Gal(L_j/L_{j-1})\cong (S_3)^{3^{j-1}}$, which is the full subgroup of $\Aut(T_{3,j})$
fixing level $j-1$ of the tree. Since $\Gal(L_{j-1}/L)$ is all of $\Aut(T_{3,j-1})$,
it follows that $\Gal(L_j/L)\cong \Aut(T_{3,j})$, completing our induction on $j$.

\medskip

\textbf{Step 4}. It remains to prove the claim of Step~2.
Note that $v_j(\Delta(f^{j-1}(z)-x_0))=0$, since the only factors of $\Delta(f^{j-1}(z)-x_0)$
in equation~\eqref{eq:cubiciter} involve $3$, $A$, and $E_1(x_0),\ldots,E_{j-1}(x_0)$.
Therefore $w_j(\Delta(f^{j-1}(z)-x_0))=0$ as well, and hence $w_j$ does not ramify in $L_{j-1}$.
Let $W_j$ be a place of $L_{j-1}$ lying over $w_j$.
It suffices to show that $W_j$ does not ramify in $\widehat{\Lambda}_{\alpha}$.

Suppose not, i.e., suppose that there is some $\beta\in f^{-(j-1)}(x_0)\smallsetminus \{\alpha\}$
such that $W_j$ ramifies in the compositum $L_{j-1} \Lambda_\beta$.
Then $W_j(\Delta(f(z)-\beta))>0$, but we still have $W_j(3)=W_j(A)=0$, and therefore
$W_j(E_1(\beta))>0$. Since we already had $w_j(E_1(\alpha))$ odd (and hence positive),
it follows that each of $\alpha$ and $\beta$ is congruent modulo $\fP$
to one of the critical values $f(\pm\gamma)$ of $f$,
where $\fP$ is the prime ideal of $\calO_{L_{j-1}}$ corresponding to $W_j$.
More precisely, even if $\gamma$ is not $L_{j-1}$-rational, 
we mean that the quadratic polynomial $E_1(t)$ factors modulo~$\fP$,
and we denote its two roots in $\calO_{L_{j-1}}/\fP$ by $f(\gamma)$ and $f(-\gamma)$
for the remainder of the proof.
Without loss, suppose $\alpha\equiv f(\gamma) \pmod{\fP}$.
We consider two cases.

If $\beta\equiv f(\gamma) \pmod{\fP}$, then
$W_j(\alpha-\beta)>0$. But then $W_j(\Delta(f^{j-1}(z)-x_0))>0$ as well, and hence $v_j(\Delta(f^{j-1}(z)-x_0))>0$,
since $(\alpha-\beta)^2$ is a factor of this discriminant. However, this same discriminant
is a product of powers of $3$, $A$, and $E_i(x_0)$ for $1\leq i\leq j-1$,
all of which have $v_j(\cdot)=0$ by our hypotheses. Thus, we have a contradiction.

Otherwise, we have $\beta\equiv f(-\gamma) \pmod{\fP}$. Then
\[ C_j = \big(f^j(\gamma)-f^j(-\gamma)\big)^2
\equiv \big( f^{j-1}(\alpha) - f^{j-1}(\beta) \big)^2 = (x_0-x_0)^2 = 0 \pmod{\fP}. \]
That is, $W_j(C_j)>0$, and hence $v_j(C_j)=0$, again contradicting our hypotheses.
\end{proof}

Our next result gives a criterion for our Galois groups at level $\ell$ not to be contained in
any of the four subgroups $H_{\ell,1},H_{\ell,2},H_{\ell,3},H_{\ell,4}$ of $Q_{\ell,\ell}$
from Section~\ref{ssec:surjpre}.

\begin{lemma}
\label{lem:ellbump}
With notation as at the start of this section,
suppose that the two critical points $\gamma_1,\gamma_2$ of $f$
collide at the $\ell$-th iterate, for some $\ell\geq 2$. Suppose further that
there is a place $v_{\ell}\in M_L^0$ for which
\begin{enumerate}
\item $v_\ell(A)=v_\ell(B)=v_{\ell}(C_{\ell-1}) = v_\ell(6)=0\leq v_\ell(x_0)$,
\item $v_\ell(E_i(x_0))=0$ for all $1\leq i \leq \ell-1$, and
\item $v_\ell(\tilde{E}_\ell(x_0))$ is odd.
\end{enumerate}
Then $\Gal(L_\ell/L_{\ell-1})$ is not contained in any of the subgroups
$H_{\ell,1},H_{\ell,2},H_{\ell,3},H_{\ell,4}$ of $Q_{\ell,\ell}$.
\end{lemma}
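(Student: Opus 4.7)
The plan is to argue by contradiction, assuming $\Gal(L_\ell/L_{\ell-1}) \subseteq H_{\ell,1}$, and to derive a contradiction via a Newton polygon computation involving the quartic of Proposition~\ref{prop:quartic}. First I will observe that every element $\sigma \in \Gal(L_\ell/L_{\ell-1})$ fixes each of the three level-$1$ nodes of $T_{3,\ell}$, so each sign $\sgn_{\ell-1}(\sigma, x_{0i})$ depends only on $\sigma$'s action on the descendants of that node and is unaffected by any relabeling at level $\ell$ (such a relabeling conjugates the induced permutation, preserving its sign). Consequently the four conjugate subgroups $H_{\ell,1},\ldots,H_{\ell,4}$ all intersect $\Gal(L_\ell/L_{\ell-1})$ in the same subgroup, and it suffices to derive a contradiction from $\Gal(L_\ell/L_{\ell-1}) \subseteq H_{\ell,1}$.

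Let $\alpha_1, \alpha_2, \alpha_3 \in L_1$ denote the three roots of $f(z) - x_0$ (the level-$1$ nodes). Under the contradictory hypothesis, the three quadratic characters of $\Gal(L_\ell/L_{\ell-1})$ cut out by $\sqrt{\Delta(f^{\ell-1}(z) - \alpha_i)}$ coincide, so each product $\Delta(f^{\ell-1}(z) - \alpha_i) \cdot \Delta(f^{\ell-1}(z) - \alpha_j)$ is a square in $L_{\ell-1}$. Iterating \eqref{eq:cubiciter} and using the collision identity $E_\ell(y) = \tilde E_\ell(y)^2$ reduces this modulo squares to the statement that $\prod_{k=1}^{\ell-1} E_k(\alpha_i) E_k(\alpha_j)$ is a square in $L_{\ell-1}$. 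Using hypothesis~(2) together with the norm relation $A^2\prod_i E_k(\alpha_i) = E_{k+1}(x_0)$, I will show that each $E_k(\alpha_i)$ is a $v_\ell$-unit for $1 \leq k \leq \ell - 2$; combined with the square condition localized at $v_\ell$, this forces $E_{\ell-1}(\alpha_i) E_{\ell-1}(\alpha_j)$ to be a square in $L_{\ell-1}$. Hence the products $\delta_i \delta_j := \sqrt{E_{\ell-1}(\alpha_i) E_{\ell-1}(\alpha_j)}$ lie in $L_{\ell-1}$ for each pair $i \neq j$, and by Proposition~\ref{prop:quartic} the four roots $\theta_i = \pm \delta_1 \delta_2 \pm \delta_1 \delta_3 \pm \delta_2 \delta_3$ of the quartic $q(z) \in L[z]$ all lie in $L_{\ell-1}$.

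It remains to contradict this via a Newton polygon computation. Set $m := v_\ell(\tilde E_\ell(x_0))$, which is odd by hypothesis~(3). Using the formulas of Proposition~\ref{prop:alphacubic} and hypothesis~(1), one obtains $v_\ell(2 s_{\ell,2}) \geq m$, $v_\ell(8 s_{\ell,3}) = 2m$, and $v_\ell(s_{\ell,2}^2 - 4 s_{\ell,1} s_{\ell,3}) = 2m$, so the lower convex hull of the Newton polygon of $q(z)$ at $v_\ell$ consists of a single segment from $(0, 2m)$ to $(4, 0)$ of slope $-m/2$, a non-integer. On the other hand, iterating \eqref{eq:cubiciter} with hypotheses~(1) and~(2) yields $v_\ell(\Delta(f^{\ell-1}(z) - x_0)) = 0$, so $L_{\ell-1}/L$ is unramified at $v_\ell$; thus every root of $q(z)$ in $L_{\ell-1}$ must have integer $v_\ell$-valuation, contradicting $\theta_i \in L_{\ell-1}$. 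The main obstacle is the reduction step in the second paragraph: for $\ell = 2$ the product collapses to $E_1(\alpha_i) E_1(\alpha_j)$ and the argument is immediate, but for $\ell \geq 3$ one needs a careful local analysis at $v_\ell$ to extract squareness of the single factor $E_{\ell-1}(\alpha_i) E_{\ell-1}(\alpha_j)$ from squareness of the full product $\prod_k E_k(\alpha_i) E_k(\alpha_j)$.
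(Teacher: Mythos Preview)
Your overall architecture and the Newton-polygon endgame coincide with the paper's proof: one argues that containment in some $H_{\ell,j}$ forces the roots $\theta_m$ of the quartic of Proposition~\ref{prop:quartic} to lie in $L_{\ell-1}$, and then the single-segment polygon of slope $-v_\ell(\tilde E_\ell(x_0))/2$ together with unramifiedness of $L_{\ell-1}/L$ at $v_\ell$ (from hypothesis~(2) via~\eqref{eq:cubiciter}) rules this out. Your opening remark that, for $\sigma$ fixing level~$1$, the relabelings defining the four $H_{\ell,j}$ conjugate the top-level permutation and hence preserve each $\sgn_{\ell-1}(\sigma,\alpha_i)$, is a nice way to reduce to a single $H$; the paper phrases this as ``there would be a labeling for which the three signs agree,'' which amounts to the same thing.

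The genuine gap is your extraction step for $\ell\geq 3$. From the $H$-containment you correctly obtain that $\prod_{k=1}^{\ell-1} E_k(\alpha_i)E_k(\alpha_j)$ is a square in $L_{\ell-1}$, and you correctly show each $E_k(\alpha_i)$ with $k\leq\ell-2$ is a $v_\ell$-unit. But a $v_\ell$-unit need not be a square, even in the completion; your ``local analysis at $v_\ell$'' can only conclude that $E_{\ell-1}(\alpha_i)E_{\ell-1}(\alpha_j)$ has even $w$-valuation at every $w\mid v_\ell$, not that it is a square in $L_{\ell-1}$. Without actual squareness you cannot place $\delta_i\delta_j$, hence $\theta_m$, inside $L_{\ell-1}$, and the contradiction does not close.

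The fix is global rather than local, and is what the paper uses: for $k\leq\ell-2$ the roots of $f^k(z)-\alpha_i$ already lie in $L_{k+1}\subseteq L_{\ell-1}$, so $\sqrt{\Delta(f^k-\alpha_i)}\in L_{\ell-1}$. Applying the one-step recursion~\eqref{eq:cubiciter} at $n=\ell-1$ then gives, for $\sigma\in\Gal(L_\ell/L_{\ell-1})$,
\[
\sgn_{\ell-1}(\sigma,\alpha_i)
=\frac{\sigma\big(\sqrt{-3\,E_{\ell-1}(\alpha_i)}\big)}{\sqrt{-3\,E_{\ell-1}(\alpha_i)}},
\]
because the factor $\sqrt{\Delta(f^{\ell-2}-\alpha_i)}$ is already $\sigma$-fixed. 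Equality of the three signs now forces $\sigma(\delta_i\delta_j)=\delta_i\delta_j$ directly (the $-3$'s cancel in pairs), so each $\theta_m\in L_{\ell-1}$. In short, the lower factors $E_k(\alpha_i)$ are absorbed not by a valuation argument but by the tower $L_1\subset\cdots\subset L_{\ell-1}$; once you replace your local step with this observation, your proof is complete and matches the paper's.
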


\begin{proof}
Write $f^{-1}(x_0)=\{\alpha_1,\alpha_2,\alpha_3\}$.
As in Proposition~\ref{prop:quartic}, define $\delta_i:=\sqrt{E_{\ell-1}(\alpha_i)}$ for each $i=1,2,3$,
and $\theta_1,\theta_2,\theta_3,\theta_4$
to be the four values of $\pm\delta_1\delta_2 \pm\delta_1\delta_3 \pm\delta_2\delta_3$
with an even number of $-$ signs.
Recalling that $C_{\ell-1}=-4BF_{\ell-1}^2/(3A)$,
Propositions~\ref{prop:alphacubic} and~\ref{prop:quartic}
together say that the quartic polynomial with roots $\theta_1,\theta_2,\theta_3,\theta_4$ is
\begin{equation}
\label{eq:quartnewt}
z^4 + \frac{12}{A} G_{\ell-1} \tilde{E}_{\ell}(x_0)z^2 - \frac{8}{A^2} \big( \tilde{E}_{\ell}(x_0) \big)^2 z
+ \frac{1}{A^2} C_{\ell-1} \big( \tilde{E}_{\ell}(x_0) \big)^2 .
\end{equation}
By our hypotheses, we have $v_\ell(\tilde{E}_\ell(x_0))>0$, and
the Newton polygon of this polynomial at the valuation $v_{\ell}$
appears in Figure~\ref{fig:quartnewt}.
The slope of the only segment of the polygon is $v_\ell(\tilde{E}_\ell(x_0))/2$,
which is not an integer.
(The coefficient of $z^2$ may have valuation greater than the value
$v_\ell(\tilde{E}_\ell(x_0))$ shown in Figure~\ref{fig:quartnewt},
but this would not change the polygon itself.)

\begin{figure}
\begin{tikzpicture}
\draw[->] (-.3,0) -- (5,0);
\draw[->] (0,-.3) -- (0,2.7);
\path[draw] (-.2,1) -- (.2,1);
\path[draw] (-.2,2) -- (.2,2);
\path[draw] (1,-.2) -- (1,.2);x
\path[draw] (2,-.2) -- (2,.2);
\path[draw] (3,-.2) -- (3,.2);
\path[draw] (4,-.2) -- (4,.2);
\path[fill] (0,2) circle (0.1);
\path[fill] (1,2) circle (0.1);
\path[fill] (2,1) circle (0.1);
\path[fill] (4,0) circle (0.1);
\path[draw,dashed] (0,2) -- (4,0);
\node (x1) at (1,-0.4) {\footnotesize $1$};
\node (x2) at (2,-0.4) {\footnotesize $2$};
\node (x3) at (3,-0.4) {\footnotesize $3$};
\node (x4) at (4,-0.4) {\footnotesize $4$};
\node (v1) at (-1.1,1) {\footnotesize $v_{\ell}(\tilde{E}_\ell(x_0))$};
\node (v2) at (-1.15,2) {\footnotesize $2v_{\ell}(\tilde{E}_\ell(x_0))$};
\end{tikzpicture}
\caption{The Newton polygon of the quartic polynomial~\eqref{eq:quartnewt}, where $\tilde{E}=\tilde{E}_{\ell}(x_0)$.}
\label{fig:quartnewt}
\end{figure}
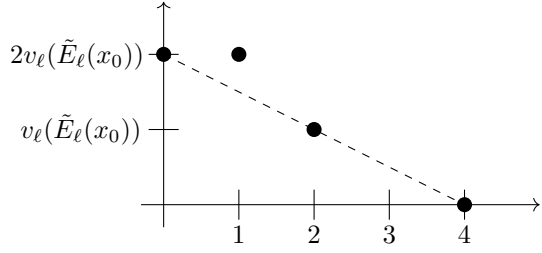

Because $v_\ell(E_i(x_0))=0$ for all $1\leq i \leq \ell-1$, it follows
that $v_{\ell}(\Delta(f^{\ell-1}-x_0))=0$ by equation~\eqref{eq:cubiciter},
and hence $v_{\ell}$ does not ramify in $L_{\ell-1}$.
Thus, because of the non-integral slope of the Newton polygon, 
the polynomial~\eqref{eq:quartnewt} has no $L_{\ell-1}$-rational roots,
i.e., $\theta_1,\theta_2,\theta_3,\theta_4\not\in L_{\ell-1}$.

If $\Gal(L_{\ell}/L_{\ell-1})$ is contained in some $H_{\ell,j}$,
then there would be a labeling on $T_{3,\ell}$ such that
every $\sigma\in\Gal(L_{\ell}/L_{\ell-1})$ satisfies
$\sgn_{\ell-1}(\sigma,\alpha_1) = \sgn_{\ell-1}(\sigma,\alpha_2) = \sgn_{\ell-1}(\sigma,\alpha_3)$.
That is,
\[ \frac{\sigma(\sqrt{\delta_1})}{\sqrt{\delta_1}} =
\frac{\sigma(\sqrt{\delta_2})}{\sqrt{\delta_2}} =\frac{\sigma(\sqrt{\delta_3})}{\sqrt{\delta_3}}, \]
and hence $\sigma(\theta_j)=\theta_j$ for every $\sigma\in\Gal(L_{\ell}/L_{\ell-1})$.
But then $\theta_j\in L_{\ell-1}$, contradicting the previous paragraph and proving the desired result.
\end{proof}

The hypotheses of Theorem~\ref{thm:main2}, as well as those
of Lemmas~\ref{lem:looper} and~\ref{lem:ellbump},
inspire the following terminology. For a field $L$ and root point $x_0\in L$ as above,
we will say that the pair $(L,x_0)$ satisfies condition~$(\dagger)$ if
there are places $v_1,v_2,\ldots\in M_L^0$ such that for every $n\geq 1$:
\[ (\dagger) \; \begin{cases}
v_n(A)=v_n(B)=v_n(6)=0\leq v_n(x_0), \\
v_n(C_j)=0 \text{ for all } 1\leq j \leq \min\{\ell-1,n\}, \\
v_n(E_i(x_0))=0 \text{ for all } 1\leq i \leq n-1, \\
\text{if } n\leq \ell-1, \text{ then } v_n(E_n(x_0)) \text{ is odd, and} \\
\text{if } n\geq \ell, \text{ then } v_n(\tilde{E}_n(x_0)) \text{ is odd.} \\
\end{cases} \]



\begin{lemma}
\label{lem:inherit}
With notation as at the start of this section, 
suppose that the two critical points $\gamma_1,\gamma_2$ of $f$
collide at the $\ell$-th iterate, for some $\ell\geq 2$.
Suppose further that $(L,x_0)$ satisfies condition~$(\dagger)$.
Let $\alpha\in f^{-1}(x_0)\in\Lbar$. Then $(L(\alpha),\alpha)$ also satisfies condition~$(\dagger)$.
\end{lemma}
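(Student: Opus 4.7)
The plan is to construct, for each $n\geq 1$, a place $w_n\in M_{L(\alpha)}^0$ satisfying the five bullets of condition~$(\dagger)$ for $(L(\alpha),\alpha)$, by choosing $w_n$ to lie above an appropriate place from the sequence $v_1,v_2,\ldots$ guaranteed by $(\dagger)$ for $(L,x_0)$. The natural shift is to take $w_n$ above $v_{n+1}$, reflecting that $\alpha$ sits one level higher in the preimage tree than $x_0$.

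With this choice, the first four bullets will follow routinely. The valuations $w_n(A),\,w_n(B),\,w_n(6),\,w_n(C_j)$ inherit from $v_{n+1}$ and are all zero. For $w_n(\alpha)\geq 0$, I will observe that $A\alpha$ is a root of the monic polynomial $z^3+ABz-A^2(x_0-1)$, whose coefficients are $v_{n+1}$-integral, so $A\alpha$ and hence $\alpha$ is $v_{n+1}$-integral. For the condition $w_n(E_i(\alpha))=0$ with $1\leq i\leq n-1$, I will use the identity $E_{i+1}(x_0)=A^2\prod_{\beta\in f^{-1}(x_0)} E_i(\beta)$ together with the fact that each $E_i(\beta)=H_i-2G_i\beta+\beta^2$ is $v_{n+1}$-integral: since $F_i,G_i$ are polynomials in $A,B$ with coefficients in $\ZZ[1/3]$ and $v_{n+1}(A)=v_{n+1}(B)=v_{n+1}(6)=0$, we have $v_{n+1}(F_i), v_{n+1}(G_i), v_{n+1}(H_i)\geq 0$. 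Nonnegativity of each factor combined with $v_{n+1}(E_{i+1}(x_0))=0$ then forces every $w\mid v_{n+1}$ to satisfy $w(E_i(\alpha))=0$.

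For the odd-valuation bullet I split into three sub-cases. When $1\leq n\leq \ell-2$, Lemma~\ref{lem:inherit1}(1) applied with $m=1$ produces some $w_n\mid v_{n+1}$ with $w_n(E_n(\alpha))$ odd, using that $(\dagger)$ for $(L,x_0)$ forces $v_{n+1}(E_{n+1}(x_0))$ odd. When $n\geq\ell$, Lemma~\ref{lem:inherit1}(2) analogously produces $w_n\mid v_{n+1}$ with $w_n(\tilde{E}_n(\alpha))$ odd, using that $v_{n+1}(\tilde{E}_{n+1}(x_0))$ is odd.

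The main obstacle will be the remaining case $n=\ell-1$, where the shifted strategy breaks down: since $F_\ell=0$ forces $E_\ell(x_0)=\tilde{E}_\ell(x_0)^2$, the valuation $v_\ell(E_\ell(x_0))$ is automatically even and Lemma~\ref{lem:inherit1}(1) yields no parity information. To handle this case, I will invoke Proposition~\ref{prop:alphacubic} and perform a Newton polygon analysis at $v_\ell$. The hypothesis $v_\ell(C_{\ell-1})=0$, combined with the identity $C_{\ell-1}=-4BF_{\ell-1}^2/(3A)$ from Remark~\ref{rem:BC}, forces $v_\ell(F_{\ell-1})=0$, hence $v_\ell(3G_{\ell-1}^2+B/A)=2v_\ell(F_{\ell-1})=0$. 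The Newton polygon at $v_\ell$ of the cubic $z^3-s_{\ell,1}z^2+s_{\ell,2}z-s_{\ell,3}$ from Proposition~\ref{prop:alphacubic}, whose roots are $E_{\ell-1}(\alpha_1),E_{\ell-1}(\alpha_2),E_{\ell-1}(\alpha_3)$, then has exactly two slopes, with two roots of valuation $k:=v_\ell(\tilde{E}_\ell(x_0))$ (odd by $(\dagger)$) and one of valuation $0$. Combining this with the norm identity $\Norm_{L(\alpha)/L}(E_{\ell-1}(\alpha))=A^{-2}\tilde{E}_\ell(x_0)^2$ and the fact that $f(z)-x_0$ is separable modulo $v_\ell$ (since $v_\ell(E_1(x_0))=0$ implies $v_\ell(\Delta(f-x_0))=0$ by equation~\eqref{eq:cubiciter}), a case analysis on the factorization type of $f(z)-x_0$ modulo $v_\ell$ will in each case produce a place $w_{\ell-1}\mid v_\ell$ of $L(\alpha)$ at which $w_{\ell-1}(E_{\ell-1}(\alpha))=k$, which is odd.
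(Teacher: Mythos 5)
Your overall strategy — choosing $w_n$ above $v_{n+1}$, dispatching the routine $(\dagger)$ bullets directly, and using Lemma~\ref{lem:inherit1} with $m=1$ for the parity conditions in the ranges $1\leq n\leq\ell-2$ and $n\geq\ell$ — matches the paper's proof exactly, and you correctly identify $n=\ell-1$ as the genuinely delicate case where $E_\ell(x_0)=\tilde{E}_\ell(x_0)^2$ makes Lemma~\ref{lem:inherit1}(1) useless.

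For $n=\ell-1$, however, you take a genuinely different route from the paper, and the route as written has a gap. The paper passes to the auxiliary field $L(\gamma_1)$ — noting it is unramified at $v_\ell$ since the discriminant $-4B/(3A)$ of $z^2+B/(3A)$ is a $v_\ell$-unit — and then simply reapplies Lemma~\ref{lem:inherit1}(2) there (since $f^{\ell-1}(\gamma_1)$ \emph{is} $L(\gamma_1)$-rational) to get a place $\tilde{w}_{\ell-1}$ with $\tilde{w}_{\ell-1}(f^{\ell-1}(\gamma_1)-\alpha)$ odd. The hypothesis $v_\ell(C_{\ell-1})=0$ then forces $\tilde{w}_{\ell-1}(f^{\ell-1}(\gamma_2)-\alpha)=0$, so $\tilde{w}_{\ell-1}(E_{\ell-1}(\alpha))$ is odd, and one descends to $L(\alpha)$ again by unramifiedness. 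This requires no new machinery beyond what is already in hand.

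Your alternative, via the Newton polygon of the cubic $z^3-s_{\ell,1}z^2+s_{\ell,2}z-s_{\ell,3}$ from Proposition~\ref{prop:alphacubic}, is a reasonable idea, but the deduction contains an error. To get the two-segment polygon you describe (slopes $-k,-k,0$, vertex at $(2,0)$), you need $v_\ell(s_{\ell,1})=0$, where $s_{\ell,1}=B/A+12G_{\ell-1}^2$. What you actually prove is $v_\ell(3G_{\ell-1}^2+B/A)=0$, using $3G_{\ell-1}^2+B/A=(B/(3A))F_{\ell-1}^2$ from equation~\eqref{eq:ellcond}. But $s_{\ell,1}=(3G_{\ell-1}^2+B/A)+9G_{\ell-1}^2$, and the extra term $9G_{\ell-1}^2$ has valuation $\geq 0$ and could in principle produce cancellation, so $v_\ell(s_{\ell,1})=0$ does \emph{not} follow from your computation. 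The claim $v_\ell(s_{\ell,1})=0$ is in fact true, but needs a different argument: one can check that mod the prime of $v_\ell$, $f^{\ell-1}(\gamma_1)$ and $f^{\ell-1}(\gamma_2)$ reduce to two \emph{distinct} roots of $f-x_0$ (distinct by $v_\ell(C_{\ell-1})=0$; both are roots since $v_\ell(\tilde{E}_\ell(x_0))>0$), so the third root $\alpha_{(3)}$ satisfies $E_{\ell-1}(\alpha_{(3)})\not\equiv 0$, whence $s_{\ell,1}=\sum_i E_{\ell-1}(\alpha_i)\equiv E_{\ell-1}(\alpha_{(3)})\not\equiv 0$ since the other two summands vanish mod the prime; this uses that $\Delta(f-x_0)=-27A^2E_1(x_0)$ is a $v_\ell$-unit. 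Your ``case analysis on the factorization type of $f(z)-x_0$ modulo $v_\ell$'' is the right idea to complete the argument but is not worked out; once it is, it is arguably doing the same work as the paper's appeal to $L(\gamma_1)$ anyway, with more bookkeeping. As it stands, the proposal for $n=\ell-1$ needs the $v_\ell(s_{\ell,1})=0$ claim repaired and the terminal case analysis filled in.
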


\begin{proof}
For each $n\geq 1$, we will choose $w_n\in M_{L(\alpha)}^0$ lying above $v_{n+1}$,
and prove that $w_1,w_2,\ldots$ satisfy the five conditions of~$(\dagger)$ with $w_n$ in place of $v_n$
and $\alpha$ in place of $x_0$.
Since $w_n|v_{n+1}$, we will automatically have $w_n(A)=w_n(B)=w_n(6)=0$,
as well as the condition that $w_n(C_j)=0$ for $1\leq j\leq \min\{\ell-1,n\}$.
In addition, because
\[ Az^3+Bz + 1-x_0 = f(z)-x_0=A\prod_{j=1}^3 (z-\alpha_j),\]
where $\alpha=\alpha_1$, we will have $w_n(\alpha)\geq 0$ for all $n$.
Furthermore, for any $i\geq 1$, substituting $z=f^{i}(\gamma)$ into this formula yields
\[ E_{i+1}(x_0) = A^{2} \prod_{j=1}^3 E_{i}(\alpha_j) .\]
Recalling from the third condition of~$(\dagger)$ for $(L,x_0)$
that $v_{n+1}(E_{i+1}(x_0))=0$ for $1\leq i+1\leq n$,
then because all the terms in these formulas are $w_n$-integral,
it follows that $w_n(E_i(\alpha))=0$ for all $1\leq i \leq n - 1$.
It remains to choose $w_n|v_{n+1}$ to ensure that the last two
conditions of~$(\dagger)$ hold for $(L(\alpha),\alpha)$.

For $1\leq n\leq \ell-2$, applying Lemma~\ref{lem:inherit1}(1) to $v_{n+1}$ with $m=1$
yields $w_n\in M_{L(\alpha)}^0$ with $w_n | v_{n+1}$ such that $w_n(E_n(\alpha))$ is odd.
For $n\geq \ell$, applying Lemma~\ref{lem:inherit1}(2) to $v_{n+1}$ with $m=1$
yields $w_n\in M_{L(\alpha)}^0$ with $w_n | v_{n+1}$ such that $w_n(\tilde{E}_n(\alpha))$ is odd.

Finally, consider $n=\ell-1$. The field $L(\gamma_1)=L(\gamma_2)$ is unramified at $v_{\ell}$
because $\gamma_1,\gamma_2$ are roots of $z^2+B/(3A)$, which has discriminant
$-4B/(3A)$ of $v_{\ell}$-valuation zero. Thus, choosing $\tilde{v}_{\ell}\in M_{L(\gamma_1)}^0$
lying above $v_{\ell}$, we have $\tilde{v}_{\ell}(\tilde{E}_{\ell}(x_0))=v_\ell(\tilde{E}_{\ell}(x_0))$ is odd.
Since $\gamma_1,\gamma_2\in L(\gamma_1)$,
we may again apply Lemma~\ref{lem:inherit1}(2) to $\tilde{v}_{\ell}$ with $m=1$
to obtain $\tilde{w}_{\ell-1}\in M_{L(\alpha,\gamma_1)}^0$ with $\tilde{w}_{\ell-1} | \tilde{v}_{\ell}$
such that $\tilde{w}_{\ell-1}(\tilde{E}_{\ell-1}(\alpha))$ is odd; see Figure~\ref{fig:LagFields}.

\begin{figure}
\begin{tikzpicture}
\draw (0.2,.12) -- (0.9,.57);
\draw (-0.2,.12) -- (-0.9,.57);
\draw (0.9,1.25) -- (0.2,1.7);
\draw (-0.9,1.25) -- (-0.2,1.7);
\node (L) at (0,0) {\small $L$};
\node (La) at (1.1,0.9) {\small $L(\alpha)$};
\node (Lg) at (-1.1,0.9) {\small $L(\gamma_1)$};
\node (Lag) at (0,2.0) {\small $L(\alpha,\gamma_1)$};
\draw (4.2,.12) -- (4.9,.57);
\draw (3.8,.12) -- (3.1,.57);
\draw (4.9,1.25) -- (4.2,1.7);
\draw (3.1,1.25) -- (3.8,1.7);
\node (v) at (4,0) {\small $v_\ell$};
\node (w) at (5.1,0.9) {\small $w_{\ell-1}$};
\node (vt) at (2.9,0.9) {\small $\tilde{v}_\ell$};
\node (wt) at (4,2.0) {\small $\tilde{w}_{\ell-1}$};
\end{tikzpicture}
\caption{The fields and valuations  in the proof of Lemma~\ref{lem:inherit}.}
\label{fig:LagFields}
\end{figure}
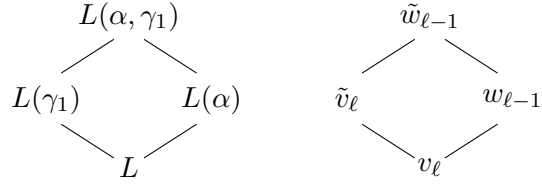

Because $\tilde{w}_{\ell-1}(f^{\ell-1}(\gamma_1)-\alpha)$ is odd, it is positive,
as all the terms involved are $\tilde{w}_{\ell-1}$-integral.
If $\tilde{w}_{\ell-1}(f^{\ell-1}(\gamma_2)-\alpha)$ were also positive,
then we would have
\[ \tilde{w}_{\ell-1}(C_{\ell-1}) 
= 2\tilde{w}_{\ell-1}\big(f^{\ell-1}(\gamma_1)-f^{\ell-1}(\gamma_1)\big) > 0, \]
contradicting the fact that $v_\ell(C_{\ell-1})=0$.
Thus, we must have 
$\tilde{w}_{\ell-1}(f^{\ell-1}(\gamma_2)-\alpha)=0$. It follows that 
\[ \tilde{w}_{\ell-1}\big( E_{\ell-1}(\alpha) \big)
= \tilde{w}_{\ell-1}\big( f^{\ell-1}(\gamma_1)-\alpha \big) + \tilde{w}_{\ell-1}\big( f^{\ell-1}(\gamma_2)-\alpha \big)
= \tilde{w}_{\ell-1}\big(\tilde{E}_{\ell-1}(\alpha)\big) + 0 \]
is odd. Let $w_{\ell-1}\in M_{L(\alpha)}^0$ be the place of $L(\alpha)$ lying below $\tilde{w}_{\ell-1}$.
The extension $L(\alpha,\gamma_1)/L(\alpha)$ is unramified at $w_{\ell-1}$, which lies above $v_{\ell}$,
for the same reason that $L(\gamma_1)/L$ was unramified at $v_\ell$.
Thus, $w_{\ell-1}( E_{\ell-1}(\alpha) )=\tilde{w}_{\ell-1}( E_{\ell-1}(\alpha) )$ is odd, as desired.
\end{proof}

\subsection{Proving Theorem~\ref{thm:main2}}
\label{ssec:endproof}

The foregoing lemmas provide us with all the tools we need to prove our second main result.

\begin{proof}[Proof of Theorem~\ref{thm:main2}]
By Proposition~\ref{prop:collide}, we may make a $K$-rational
change coordinates to assume that $f$ is of the form $f(z)=Az^3+Bz+1$.
By Remark~\ref{rem:BC}, the value of $B$ here coincides with that in 
equation~\eqref{eq:BCdef}.
Thus, the hypotheses of Theorem~\ref{thm:main2}
say that condition~$(\dagger)$ holds for the pair $(K,x_0)$.

Suppose first that $K$ contains $\sqrt{-3}$.
By inductively applying Lemma~\ref{lem:inherit}, for every $\alpha\in\Orb_f^{-}(x_0)$,
the pair $(K(\alpha),\alpha)$ also satisfies condition~$(\dagger)$.
Also for any such $\alpha$, if we pick any $\tilde{u}\in M_{K(\alpha)}^0$
lying over the place $u$ of Theorem~\ref{thm:main2},
say with $\alpha\in f^{-n}(x_0)$,
then because $u(x_0)$ is negative and prime to $3$,
the ramification index of $\tilde{u}$ over $u$ must be $3^n$;
that is, $\tilde{u}$ is totally ramified over $u$.
It follows that $\tilde{u}(\alpha)$ is also negative and prime to $3$.
In particular, for the field $L=K(\alpha)$ and root point $\alpha$,
the hypotheses of Lemma~\ref{lem:looper} apply
for each of $n=1,\ldots,\ell-1$,
and the hypotheses of Lemma~\ref{lem:ellbump} apply for $n=\ell$.

Therefore, for every $\alpha\in\Orb_f^{-}(x_0)$, writing
$K(\alpha)_n$ for the field $K(f^{-n}(\alpha))$,
repeated application of Lemma~\ref{lem:looper} shows that
$\Gal(K(\alpha)_{\ell-1} / K(\alpha))\cong \Aut(T_{3,\ell-1})$.
Similarly, Lemma~\ref{lem:ellbump} shows that $\Gal(K(\alpha)_{\ell}/K(\alpha)_{\ell-1})$
is not contained in any of the subgroups $H_{\ell,1},H_{\ell,2},H_{\ell,3},H_{\ell,4}$
of Section~\ref{ssec:surjpre}.

On the other hand, by Theorem~\ref{thm:main1}, for each $\alpha\in\Orb_f^{-}(x_0)$
and $n\geq 1$, we have $\Gal(K(\alpha)_n/K(\alpha))\subseteq Q_{\ell,n}$.
Therefore, by inductively applying Theorem~\ref{thm:gen}, it follows that
$\Gal(K(\alpha)_n/K(\alpha))$ is all of $Q_{\ell,n}$ for every $\alpha\in\Orb_f^{-}(x_0)$
and every $n\geq 1$. In particular, fixing $\alpha=x_0$ and taking the inverse limit with respect to $n$,
we have $G_{K,\infty}\cong Q_{\ell,\infty}$, as desired.

It remains to consider the case that $\sqrt{-3}\not\in K$.
Let $L:=K(\sqrt{-3})$, which is ramified at only those finite places of $K$ dividing $3$.
Since $v_n(3)=0$ for all the places $v_n$ in the statement of Theorem~\ref{thm:main2}, 
hypotheses (1)--(5) still hold for places $\tilde{v}_n\in M_L^0$ lying above each $v_n$.
In addition, whether or not the place $u$ ramifies in $L$,
any place $\tilde{u}\in M_L^0$ lying above $L$ still has $u(x_0)$ negative and prime to $3$,
since $[L:K]=2$.

Thus, the hypotheses of Theorem~\ref{thm:main2} apply to $L$ in place of $K$,
and therefore by the first portion of this proof, we have $\Gal(L_{\infty}/L)\cong Q_{\ell,\infty}$.
However, as noted in Step~2 of the proof of Theorem~\ref{thm:main1} in Section~\ref{ssec:pfthm1},
we have $\sqrt{-3}\in K_{\ell}\subseteq K_\infty$, and hence $L_{\infty}=K_{\infty}$.
Therefore, $G_{K,\infty}=\Gal(K_{\infty}/K)$ contains $\Gal(L_{\infty}/L)\cong Q_{\ell,\infty}$
as a subgroup of index $[L:K]=2$.
On the other hand, by Theorem~\ref{thm:main1}, $G_{K,\infty}$ is a subgroup of $\tilde{Q}_{\ell,\infty}$,
which contains $Q_{\ell,\infty}$ as a subgroup of index $2$, by Theorem~\ref{thm:QGroup}.
Hence, $G_{K,\infty}\cong\tilde{Q}_{\ell,\infty}$.
\end{proof}

\section{Examples}
\label{sec:ex}

\begin{example}
\label{ex:generic}
The choice of $A=33$ and $B=9$ satisfies the equation $81A + 27B - 4B^3=0$
noted just after equation~\eqref{eq:ellcond} in Section~\ref{ssec:explicit}, producing
the polynomial
\[ f(z) = 33z^3 + 9z+1, \]
whose critical points at $\pm\gamma=\pm\sqrt{-1/11}$ collide at iteration $\ell=2$.
Specifically, the critical orbit is
\[ \pm \gamma \mapsto 1 \pm 6\gamma
\mapsto -281
\mapsto -732207881
\mapsto -12954395051231033048301572681
\mapsto \cdots , \]
which, after the second term, is a rapidly decreasing sequence of negative integers.
Thus, for any choice of $x_0$, we have
\begin{align}
\label{eq:A33B9}
E_1(x_0) &= \big( 1+6\gamma - x_0)(1-6\gamma - x_0) = (1- x_0)^2 + \frac{36}{11} = x_0^2 - 2x_0 + \frac{47}{11}
\notag \\
\tilde{E}_2(x_0) &= -281 - x_0, \\
\tilde{E}_3(x_0) &= -732207881 - x_0, \notag \\
\tilde{E}_4(x_0) &= -12954395051231033048301572681 - x_0, \notag
\end{align}
and so on. We also have $C_1=(f(\gamma)-f(-\gamma))^2 = (12\gamma)^2 = -144/11$.

Let $K$ be the function field $K=k(t)$, where $k$ is a field of characteristic zero,
and let $x_0=t$. Choose $v_1\in M_K^0$ to be a place corresponding to
an irreducible factor of $E_1(t)=t^2-2t+47/11$,
and $v_n$ to be the place corresponding to the irreducible polynomial
$\tilde{E}_n(t)=f^n(\gamma)-t\in k[t]$ for each $n\geq 2$.
Then the places $v_1,v_2,\ldots\in M_K^0$ are all distinct,
and they satisfy hypotheses (1)--(5) of Theorem~\ref{thm:main2}.
We may further define $u\in M_K^0$ to be the negative degree valuation on $k[t]$,
so that $u(33)=u(9)=0$, and $u(t)=-1$ is negative and prime to $3$.
Thus, by Theorem~\ref{thm:main2}, the Galois group $G_{K,\infty}$
is all of $Q_{2,\infty}$ if $\sqrt{-3}\in k$,
or all of $\tilde{Q}_{2,\infty}$ otherwise.

More generally, for any $\ell\geq 2$ and any choice of $A,B$ satisfying
equation~\eqref{eq:ellcond} for which the forward orbit of $\gamma=\sqrt{-B/(3A)}$ is 
not preperiodic, the corresponding result should hold, that using $x_0=t\in K=k(t)$ as
the root point, the arboreal Galois group $G_{K,\infty}$
is either $Q_{\ell,\infty}$ if $\sqrt{-3}\in k$, or $\tilde{Q}_{\ell,\infty}$ otherwise.

In particular, suppose $k$ itself is the function field of the curve
\[ 9A G^2_{\ell-1}(A,B) + 3B - B F^2_{\ell-1}(A,B)=0\]
specified by equation~\eqref{eq:ellcond}.
Then it is reasonable to expect that the forward orbit
of $\gamma$ would indeed not be preperiodic, so that generically,
$G_{K,\infty}$ should be all of $Q_{\ell,\infty}$ or $\tilde{Q}_{\ell,\infty}$.
%
\end{example}

\begin{example}
\label{ex:ell2overQ}
Consider the map $f(z)=33z^3+9z+1$ of Example~\ref{ex:generic} with $\ell=2$, but this time
working over the field $K=\QQ$. 
With root point $x_0= -31/5$, we may choose $u\in M_{\QQ}^0$
to be the valuation at the prime $5$. Direct substitution into equation~\eqref{eq:A33B9}
shows
\[ E_1\bigg(-\frac{31}{5}\bigg) = \frac{36}{25 \cdot 11} \cdot 421,
\quad 
\tilde{E}_2\bigg(-\frac{31}{5}\bigg) = -\frac{6}{5} \cdot 229,
\quad
\tilde{E}_3\bigg(-\frac{31}{5}\bigg) = -\frac{6}{5} \cdot 401 \cdot 1521629, \]
and
\[ \tilde{E}_4\bigg(-\frac{31}{5}\bigg) = -\frac{6}{5} \cdot 43 \cdot 347651 \cdot 722144241378612874253 .\]
Thus, there is indeed a new prime $v_n$ satisfying conditions (1)--(5) of Theorem~\ref{thm:main2},
at least for $n=1,2,3,4$; and it seems reasonable to expect that the same is true for $n\geq 5$ as well.
That is, we certainly have $G_{\QQ,4} \cong \tilde{Q}_{2,4}$, and we expect that
$G_{\QQ,n} \cong \tilde{Q}_{2,n}$ for all $n\geq 1$, and hence that $G_{\QQ,\infty} \cong \tilde{Q}_{2,\infty}$.
Proving the existence of such an infinite sequence of primes, 
sometimes called primitive prime divisors, arises in other arboreal Galois problems,
such as in \cite{BriTuc,GNT,Hindes,JKetal}, for similar reasons. It is currently unclear how to solve
such problems in general without the use of the $abc$-Conjecture, Vojta's Conjecture, or the like.
\end{example}

\begin{example}
\label{ex:ell2overQbad}
Still with $K=\QQ$ and $f(z)=33z^3+9z+1$ and $\ell=2$,
now consider having the  root point be $x_0=-827/4$. We may choose $u\in M_{\QQ}^0$
to be the valuation at the prime $2$. Direct computation shows
\[ E_1\bigg(-\frac{827}{4}\bigg) = \frac{3^4 \cdot 93787}{2^4 \cdot 11},
\quad
\tilde{E}_2\bigg(-\frac{827}{4}\bigg) = \frac{3^4 \cdot 11}{2^2}, \]
and the quartic of Proposition~\ref{prop:quartic} is
\[ z^4 - 3^3 z^2 - \frac{3^4 \cdot 11^2}{2} z - 3^6 \cdot 11
= (z-18)\bigg( z^3 + 18 z^2 + 297z+\frac{891}{2} \bigg) \]
which has a $\QQ$-rational root. Thus, the Galois group $G_2=\Gal(K_2/\QQ)$
is not all of $\tilde{Q}_{2,2}=\Aut(T_{3,2})$, but rather is contained in the proper subgroup $H$
described in Section~\ref{ssec:surjpre}.


Finding special values for the root point $x_0$ as in this example requires some care.
After all, in light of Example~\ref{ex:generic} and Hilbert's irreducibility theorem, the set of
parameters $x_0\in\QQ$ for which the relevant quartic has a rational root
forms a thin set in the sense of Serre.
Thus, to find this example, instead of treating $x_0$ as an independent parameter and searching for values
that would make the quartic have a root, we instead treated $\tilde{E}_2$ as the independent parameter,
seeking values for which the quartic would have a root. To clear denominators, as well as knowing
that the prime-to-66 part of $\tilde{E}_2$ must be square, we further wrote $\tilde{E}_2=-33a^2$
and $z=aw$ for parameters $a$ and $w$, settling on $a=3/2$ and $w=12$, to obtain
values for $x_0=-281-\tilde{E}_2$ that are of relatively small arithmetic height.
\end{example}

\textbf{Acknowledgments}.
The authors gratefully acknowledge the support of NSF grant DMS-2101925;
these results grew out of a 2022 REU project at Amherst College funded through that grant.
The authors also thank Harris Daniels and David Zureick-Brown for helpful conversations.

\bibliographystyle{amsalpha}

\end{document}